\documentclass{amsart}

\usepackage[utf8]{inputenc}
\usepackage[T1]{fontenc}
\usepackage{lmodern}
\usepackage[leqno]{amsmath}
\allowdisplaybreaks
\usepackage{amssymb} 
\usepackage{amsthm}
\usepackage{mathtools}
\usepackage[dvipsnames,svgnames,table]{xcolor}
\usepackage{xspace}
\usepackage{embrac} 
\ChangeEmph{(}[-.01em,.04em]{)}[.04em,-.05em]
\usepackage{microtype}
\usepackage{textcase}
\usepackage{cancel}

\usepackage{enumitem}
\usepackage[acronym]{glossaries}
\usepackage{graphicx}
\usepackage{float}
\usepackage{booktabs}
\usepackage{csquotes}
\usepackage{multirow}
\usepackage{tikz}
\usepackage{caption}
\usepackage{subcaption}
\usetikzlibrary{angles,quotes,calc,positioning,shapes,matrix,arrows}
\usepackage{tikz-cd}
\usetikzlibrary{}
\usepackage{tkz-euclide}
\usepackage{etoolbox}
\usepackage{url}
\usepackage[hyperindex=true, colorlinks=true, linkcolor=Maroon, citecolor=DarkGreen, urlcolor=NavyBlue, breaklinks=true,linktocpage=true,linktoc=all]{hyperref}
\usepackage[english,capitalise,noabbrev]{cleveref}
\crefname{equation}{equation}{equations}
\usepackage[logical-quotes,backrefs,initials]{amsrefs}
\usepackage{lipsum}

\renewcommand{\S}{\mathbb{S}}
\newcommand{\Esp}{\mathbb{E}}
\newcommand{\Energy}{E}

\renewcommand{\digamma}{\psi}
\newcommand{\N}{\mathbb{N}}

\newcommand{\R}{\mathbb{R}}
\newcommand{\C}{\mathbb{C}}

\renewcommand{\S}{\mathbb{S}}

\newcommand{\CP}{\mathbb{CP}}

\renewcommand{\Re}{\operatorname{Re}} 
\renewcommand{\Im}{\operatorname{Im}} 

\newcommand{\Jac}{\operatorname{Jac}}

\DeclarePairedDelimiterX\norm[1]\lVert\rVert{\ifblank{#1}{\:\cdot\:}{#1}}
\DeclarePairedDelimiterX\abs[1]\lvert\rvert{\ifblank{#1}{\:\cdot\:}{#1}}
\DeclarePairedDelimiterX\set[1]{\{}{\}}{\ifblank{#1}{\: \:}{#1}}
\DeclarePairedDelimiterX\innerprod[2]\langle\rangle
{\ifblank{#1#2}{\,\cdot,\cdot\,}
	{#1,#2}}
 \DeclarePairedDelimiterX\mutualEnergy[2]\langle\rangle{\ifblank{#1#2}{\,\cdot,\cdot\,}{#1,#2}}
\DeclarePairedDelimiterX\braket[2]\langle\rangle
{\ifblank{#1#2}{\,\cdot\,\vert\,\cdot\,}
	{\,#1\, \vert\, #2\,}}

\DeclarePairedDelimiterX\floor[1]\lfloor\rfloor{\ifblank{#1}{\:\cdot\:}{#1}}
\DeclarePairedDelimiterXPP\expectation[2]{\ifblank{#1}{\mathbb{E}}{\mathbb{E}_{#1}}}\lbrack\rbrack{}{\ifblank{#2}{\:\cdot\:}{#2}}
\DeclarePairedDelimiterXPP\Pochhammer[2]{}{(}{)}{_{#2}}{\ifblank{#1}{\cdot}{#1}}

\newcommand{\Sphericalensemble}[1]{\mathcal{X}_S^{#1}}
\newcommand{\Harmonicensemble}[1]{\mathcal{X}_H^{#1}}
\newcommand{\HopfSphericalensemble}[2]{\uparrow_{H}^{#1}(\mathcal{X}_S^{#2})}
\newcommand{\HopfHarmonicensemble}[2]{\uparrow_{H}^{#1}(\mathcal{X}_H^{#2})}
\newcommand{\Hopfdpp}[2]{\uparrow_{H}^{#1}(\mathcal{X}^{#2})}
\newcommand{\HopfDiamond}[2]{\uparrow_{H}^{#1}(\diamond_{#2})}
\newcommand{\HopfUniform}[2]{\uparrow_{H}^{#1}(U_{\S^2}^{#2})}
\newcommand{\HopfUniformAntipodal}[2]{\uparrow_{H}^{#1}(\bar{U}_{\S^2}^{#2})}

\newacronym{dpp}{\textup{\textsc{{dpp}}}}{\emph{determinantal point process}}

\theoremstyle{plain}
\newtheorem{theorem}{Theorem}[section]

\newtheorem{corollary}[theorem]{Corollary}

\newtheorem{lemma}[theorem]{Lemma}
\newtheorem{proposition}[theorem]{Proposition}

\theoremstyle{definition}

\newtheorem{definition}[theorem]{Definition}

\theoremstyle{remark}

\newtheorem{remark}[theorem]{Remark}

\begin{document}

	\title[Random points on $\S^3$ with small logarithmic energy]{Random points on $\S^3$ with small logarithmic energy}
	
	\author{Ujué Etayo}
	\address{Ujué Etayo: Departamento de Matemáticas, CUNEF Universidad, Leonardo Prieto Castro, 2. Ciudad Universitaria, 28040 Madrid, Spain}
	\email{ujue.etayo@cunef.edu}
	
	\author{Pablo G. Arce}
	\address{Pablo G. Arce:  Institute of Mathematical Sciences, National Spanish Research Council, C. Nicolás Cabrera, 13-15, Fuencarral-El Pardo, 28049, Madrid, Spain}
	\email{pablo.garcia@icmat.es}
	
	\date{\today{}}
	
	\thanks{The authors have been supported by grant PID2020-113887GB-I00 funded by MCIN/AEI/10.13039/501100011033. The first author has also been supported by the starting grant from FBBVA associated with the prize José Luis Rubio de Francia and by the Ramón y Cajal Programme of the Spanish Ministry of Science, Innovation and Universities, through the Agencia Estatal de Investigación (AEI), co-funded by the European Social Fund Plus (ESF+), under grant RYC2024-049105-I}
	
\subjclass[2020]{Primary 31C20; Secondary 52C35, 60G55, 82B21}
	
\keywords{Logarithmic energy;  Hopf fibration; determinantal point processes; Diamond ensemble; Spherical ensemble; Harmonic ensemble}

\begin{abstract}
We analyse several constructions of random point sets on the sphere $\S^{3}\subset\R^4$ evaluating and comparing them through their discrete logarithmic energy:
%
\begin{equation*}
    E_0(\omega_N) = 
    \sum_{\substack{i, j=1\\ i \neq j}}^{N} 
    \log\frac{1}{\|x_i - x_j\|}, 
    \; \text{ where}\; \omega_N=\{x_1,\ldots,x_N\} \subset \S^3.
\end{equation*}
%
Using the Hopf fibration, we lift a range of well-distributed families of points from the $2$-dimensional sphere - including uniformly random points, antipodally symmetric sets, determinantal point processes, and the Diamond ensemble - to $\S^{3}$, in order to assess their energy performance.
In particular, we carry out this asymptotic analysis for the Spherical ensemble (a well known determinantal point process on $\S^2$), obtaining as a result a family of points on the $3$-dimensional sphere whose logarithmic energy is asymptotically the lowest achieved to date. 
This, in turn, provides a new upper bound for the minimal logarithmic energy on $\S^3$.
Although an analytic treatment of the lifted Diamond ensemble remains elusive, extensive simulations presented here show that its empirical energies lie below all other deterministic and non-deterministic constructions considered.  
Together, these results sharpen the quantitative link between potential-theoretic optima on \(\S^{2}\) and \(\S^{3}\) and provide both theoretical and numerical benchmarks for future work.
\end{abstract}
	
	\maketitle



\section{Introduction and main results}

\subsection{Logarithmic energy on $\S^2$}

The problem of distributing points on a sphere so as to minimize a certain energy or maximize mutual distances is a classic topic in potential theory and discrete geometry. One notable example is \textit{Whyte's problem}, which asks for the arrangement of $N$ points on the unit $2$-dimensional sphere, $\mathbb{S}^2$, that maximizes the product of all pairwise distances. 
Equivalently, this problem seeks to minimize the \textit{logarithmic energy} of $N$ points on $\mathbb{S}^2$. For a configuration $\omega_N = \{x_1,\dots,x_N\}\subset \mathbb{S}^2$, the discrete logarithmic energy is defined by 
\begin{equation*}
    E_0(\omega_N) := 
    \sum_{\substack{i, j=1\\ i \neq j}}^{N} 
    \log\frac{1}{\|x_i - x_j\|}, 
\end{equation*}
where $\|\cdot\|$ denotes the Euclidean distance. 
Points which minimize $E_0(\omega_N)$ are often called \textit{logarithmic points} or \textit{elliptic Fekete points}.
The problem of minimizing the logarithmic energy has special significance - for instance, it features in Smale’s seventh problem on the asymptotic distribution of optimal configurations on the sphere, see \cite{Smale1998}.

Potential theory shows that the equilibrium measure for the logarithmic potential on $\mathbb{S}^d$ is the normalized surface area measure \cite{BorodachovHardinSaff2019}. Accordingly, one expects that, as $N\to\infty$, configurations minimizing (or nearly minimizing) the logarithmic energy $E_0$ become asymptotically uniformly distributed on the sphere.

In the case of $\mathbb{S}^2$, this equidistribution has been rigorously established, and the minimal logarithmic energy grows like $N^2$, in agreement with the energy of a uniformly charged spherical shell. Recent work has further refined this picture by deriving precise asymptotic expansions of the minimum energy; see \cite{Marzo2025} for a comprehensive account of the current state of the art.

\subsection{Minimal logarithmic energy on $\S^3$}
Compared to the $2$-dimensional sphere, much less is known about logarithmic energy minimization on the $3$-dimensional sphere
\begin{equation*}
\mathbb{S}^3=\{(x_1,x_2,x_3,x_4)\in\mathbb{R}^4: x_1^2+x_2^2+x_3^2+x_4^2=1\}.
\end{equation*}
Finding and characterizing optimal discrete configurations on $\mathbb{S}^3$ is a challenging problem. 
The higher dimensional geometry and the absence of a simple explicit parametrization for $\mathbb{S}^3$ make the problem non-trivial. 
The current state of the art of the asymptotic expansion of the minimal logarithmic energy on $\S^3$ is due to different authors \cites{RakhmanovSaffZhou1994,brauchart_next-order_2012,BorodachovHardinSaff2019,Brauchart_optimal_logarithmic} and is presented in the following equation.
\begin{equation}\label{expansion}
    \min_{\omega_N \subset \S^3} \Energy_{0}(\omega_N)
    =
    - \frac{N^2}{4}
    - \frac{1}{3}N\log N
    + \mathcal{O}(N),     
    \; \text{ with}\; \omega_N=\{x_1,\ldots,x_N\} \subset \S^3.
\end{equation}
The best upper bound for the minimal logarithmic energy known to date is given in \cite{beltran_energy_2016}, where the authors introduce a random configuration of $N$ points called the Harmonic ensemble, denoted in the present work by $\Harmonicensemble{N}$. 
In \cite{beltran_energy_2016}, the authors compute the asymptotic expansion of the expected logarithmic energy of $N$ points drawn from the Harmonic ensemble when $N$ goes to infinity, obtaining
\begin{equation}\label{eq:previousbound}
\Esp_{\omega_N \sim \Harmonicensemble{N}}
\left[
E_{0}(\omega_N)
\right]
=
-\frac{1}{4}N^2
-\frac{1}{3}N\log N
+C_H N
+o(N),   
\end{equation}
with
\begin{equation*}
C_H=\frac{1}{3}\log \frac{1}{3}+\log 2+\psi_0\left(\frac{3}{2}\right)+\frac{1}{3}
\approx
0.70\ldots
\end{equation*}
where $\psi_0=(\log\Gamma)'$ is the digamma function.
This expansion provides the sharpest known upper bound for the minimal logarithmic energy.

\subsection{Hopf fibration}

In this work, we introduce and advocate for the use of the \emph{Hopf fibration} in tackling the $\S^3$ problem. 
The Hopf fibration is a remarkable construction introduced by H. Hopf in 1931 \cite{Hopf1931}, which realizes the $3$-dimensional
sphere as a non-trivial circle bundle over a $2$-dimensional base.

A natural and intrinsic formulation of the Hopf fibration is obtained by viewing $\S^3$ as the unit sphere in $\C^2$,
\begin{equation*}
    \S^3=\{(z_1,z_2)\in\C^2:\ |z_1|^2+|z_2|^2=1\},
\end{equation*}
and by considering the free action of the circle group
$\S^1=\{e^{i\theta}:\theta\in[0,2\pi)\}$ given by
\begin{equation*}
    e^{i\theta}\cdot(z_1,z_2)=(e^{i\theta}z_1,e^{i\theta}z_2).
\end{equation*}
The orbits of this action are circles contained in $\S^3$, and the corresponding
quotient space is the complex projective line,
\begin{equation*}
    \S^3/\S^1 \;\cong\; \CP^1.
\end{equation*}
The associated quotient map
\begin{equation}\label{inversaHopfcompleja}
    \pi:\S^3\longrightarrow\CP^1,\qquad (z_1,z_2)\longmapsto [z_1:z_2],
\end{equation}
defines the Hopf fibration with base space $\CP^1$ and fibre $\S^1$.

The classical formulation of the Hopf fibration takes $\S^2$ as its base, which is recovered by identifying $\CP^1$ with the Riemann sphere.

\begin{proposition}\label{prop:diffCP1S2}
There exists a $C^{1}$ (in fact, smooth) diffeomorphism
\begin{equation*}
\Phi:\S^{2}\longrightarrow \CP^{1}
\end{equation*}
such that the push-forward of the spherical surface measure on $\S^{2}$ coincides, up to a constant factor, with the Fubini-Study volume form on $\CP^{1}$. More precisely, if $\sigma$ denotes the surface area measure on $\S^{2}$ and $d\mathrm{vol}_{FS}$ the Fubini--Study volume form on $\CP^{1}$ normalized by ${\mathrm{vol}_{FS}}(\CP^{1})=\pi$, then
\begin{equation*}
d\mathrm{vol}_{FS}=\frac{1}{4}\,\Phi_{\#}\sigma.
\end{equation*}
In particular, the Jacobian determinant of $\Phi$ satisfies
\begin{equation*}
\Jac(\Phi)=\frac{1}{4},
\qquad
\Jac(\Phi^{-1})=4.
\end{equation*}
\end{proposition}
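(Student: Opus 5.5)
Take $\Phi$ to be the inverse stereographic projection followed by the standard chart of $\CP^1$. Concretely, for $x=(x_1,x_2,x_3)\in\S^2\setminus\{N\}$ with $N=(0,0,1)$, set
\begin{equation*}
\Phi(x)=\Bigl[1:\frac{x_1+ix_2}{1-x_3}\Bigr],
\qquad
\Phi(N)=[0:1].
\end{equation*}
Equivalently, in the chart $w=z_2/z_1$ of $\CP^1$, the map $\Phi$ is exactly stereographic projection $\S^2\to\C\cup\{\infty\}$. On the complementary chart $u=z_1/z_2=1/w$, the map is stereographic projection from the south pole composed with complex conjugation. Both expressions are smooth, and they agree on the overlap, so $\Phi$ is a smooth bijection. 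Its inverse is the explicit rational map
\begin{equation*}
w\mapsto\frac{(2\Re w,\,2\Im w,\,|w|^2-1)}{|w|^2+1},
\end{equation*}
which is smooth in both charts. Hence $\Phi$ is a diffeomorphism.

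For the measures, compute both volume forms in the single chart $w=s+it$, since one point has measure zero. The pullback of the area form of $\S^2$ under inverse stereographic projection has conformal factor $2/(1+|w|^2)$, so
\begin{equation*}
\sigma=\frac{4\,ds\,dt}{(1+|w|^2)^2}.
\end{equation*}
The Fubini--Study metric in the same chart is $\frac{|dw|^2}{(1+|w|^2)^2}$, which gives
\begin{equation*}
d\mathrm{vol}_{FS}=\frac{ds\,dt}{(1+|w|^2)^2}.
\end{equation*}
Integrating in polar coordinates, $2\pi\int_0^\infty r(1+r^2)^{-2}\,dr=\pi$. This confirms that this form is the one normalized by $\mathrm{vol}_{FS}(\CP^1)=\pi$, consistent with $\sigma(\S^2)=4\pi$. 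Comparing the two expressions gives $d\mathrm{vol}_{FS}=\tfrac14\Phi_{\#}\sigma$.

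The Jacobian statement is a rephrasing of this. $\S^2$ carries the round metric and $\CP^1$ the Fubini--Study metric, and $\Phi$ is a conformal map with constant factor $1/2$, i.e.\ $\Phi^*g_{FS}=\tfrac14 g_{\S^2}$. Its Riemannian Jacobian is therefore $(1/2)^2=1/4$ everywhere, and that of $\Phi^{-1}$ is $4$.

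The only delicate point is the normalization convention. The metric $\frac{|dw|^2}{(1+|w|^2)^2}$ is the one that makes $\CP^1$ a round sphere of radius $1/2$. This is precisely the convention under which the Hopf map $\S^3\to\CP^1$ is a Riemannian submersion, and it is the convention that gives total volume $\pi$. Once this convention is fixed, everything else is the routine chart computation above.
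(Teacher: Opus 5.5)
Your proof is correct and follows the paper's route: you realize $\Phi$ as stereographic projection into the affine chart of $\CP^1$, write down the same explicit inverse, and compare the densities $4(1+|w|^2)^{-2}$ and $(1+|w|^2)^{-2}$ in the chart $w$. Your extra checks (smoothness at the pole via the chart $u=1/w$, the normalization $\mathrm{vol}_{FS}(\CP^1)=\pi$, and reading the Jacobian off $\Phi^*g_{FS}=\tfrac14 g_{\S^2}$) make explicit what the paper's sketch leaves implicit; the only slip is cosmetic, since your opening sentence calls $\Phi$ the \emph{inverse} stereographic projection, while the formula you actually write (correctly) is the stereographic projection itself.
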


\begin{proof}[Proof (sketch)]
Identify $\CP^{1}$ with the Riemann sphere and consider the stereographic projection from the north pole $N=(0,0,1)\in\S^{2}$,
\begin{equation*}
\phi:\S^{2}\setminus\{N\}\longrightarrow\C,
\qquad
\phi(x,y,z)=\frac{x+iy}{1-z}.
\end{equation*}
Extending by $\phi(N)=\infty$ and composing with the standard affine chart $\C\hookrightarrow\CP^{1}$ yields a smooth bijection
\begin{equation*}
\Phi:\S^{2}\longrightarrow\CP^{1}.
\end{equation*}
Its inverse is explicitly given, in affine coordinates $w\in\C$, by
\begin{equation*}
\Phi^{-1}([1:w])
=
\left(
\frac{2\Re w}{1+|w|^{2}},
\frac{2\Im w}{1+|w|^{2}},
\frac{|w|^{2}-1}{1+|w|^{2}}
\right),
\end{equation*}
which shows that $\Phi$ is a $C^{1}$ diffeomorphism (in fact, $C^{\infty}$).

In stereographic coordinates, the spherical surface measure and the Fubini-Study volume form are given by
\begin{equation*}
d\sigma=\frac{4}{(1+|w|^{2})^{2}}\,dA(w),
\qquad
d\mathrm{vol}_{FS}=\frac{1}{(1+|w|^{2})^{2}}\,dA(w),
\end{equation*}
where $dA$ denotes the Lebesgue measure on $\C$. Comparing these expressions yields
\begin{equation*}
d\mathrm{vol}_{FS}=\frac{1}{4}\,d\sigma,
\end{equation*}
which implies the stated values of the Jacobian determinants.
This expression is consistent with the earlier identity \(d\mathrm{vol}_{\mathrm{FS}} = \frac{1}{4}\, \Phi_{\#} \sigma\), as we are here working in local coordinates on \(\mathbb{C} \subset \CP^1\), where the pushforward \(\Phi_{\#} \sigma\) coincides with the standard surface area measure \(d\sigma\) on the Riemann sphere.

\end{proof}

Composing the projection $\pi$ with the diffeomorphism
\begin{equation*}
    \Phi^{-1}:\CP^1\longrightarrow\S^2    
\end{equation*}
introduced in \cref{prop:diffCP1S2}, one obtains the map
\begin{equation*}
    h=\Phi^{-1}\circ\pi:\S^3\longrightarrow\S^2,
\end{equation*}
which admits the explicit real-coordinate expression
\begin{equation*}
    h(a,b,c,d)=(a^2+b^2-c^2-d^2,\;2(ad+bc),\;2(bd-ac)),
\end{equation*}
for $(a,b,c,d)\in\S^3\subset\R^4$.

It is easy to check that the image of $h$ lies on $\S^2$, since the sum of the squares of its components equals $(a^2+b^2+c^2+d^2)^2=1$. 
For each point
$\mathbf P\in\S^2$, the preimage $h^{-1}(\mathbf P)$ is a great circle in $\S^3$, called the fibre over $\mathbf P$. 
Thus, $\S^3$ is decomposed into disjoint circles, one above each point of the base space.

An explicit parametrization of the fibre over a point $(p_1,p_2,p_3)\in\S^2$ is
given by
\begin{multline}\label{inversa_hopf}
h^{-1}(p_1,p_2,p_3)
\\
=
\frac{\Bigl(
(1+p_1)\cos t,\,
(1+p_1)\sin t,\,
p_2\sin t-p_3\cos t,\,
p_2\cos t+p_3\sin t
\Bigr)}{\sqrt{2(1+p_1)}},
\end{multline}
with $t\in[0,2\pi)$.

This viewpoint shows that the Hopf fibration may be equivalently regarded as a fibration over $\S^2$ or over $\CP^1$, the two base spaces being related by the diffeomorphism $\Phi$. 
In the present work, this dual interpretation will play a key role in transferring point processes and geometric structures between $\S^3$, $\S^2$, and $\CP^1$.


\subsection{Main results}

\subsubsection{Fibred constructions}

Via the Hopf fibration, well-distributed configurations on $\mathbb{S}^2$ can be lifted to $\mathbb{S}^3$, yielding a direct link between point sets on the two spheres.
For example, suppose that we start with a configuration of $r$ "base" points on $\mathbb{S}^2$ that are reasonably uniformly distributed. 
Above each such base point $y_{j}\in \S^2$, consider its fibre $h^{-1}(y_{j})\cong \S^1$ in $\mathbb{S}^3$. 
We can place $k$ points along each of these fibres (for instance, $k$ equally spaced points on the circle). 
This construction produces a set of $N=r\,k$ points on $\mathbb{S}^3$. 
If the $r$ base points on $\mathbb{S}^2$ are well-spaced and the $k$ fibre points on each circle are evenly spaced, then one expects that the resulting $N$-point configuration on $\mathbb{S}^3$ inherits a good separation and distribution. 
Recent results support this approach; see, for instance, \cites{BeltranEtayo2018}, where the Hopf fibration is generalized to higher-dimensional spheres and exploited as a tool for constructing well-distributed point sets.  
A similar strategy is adopted by Beltrán, Carrasco, Ferizović and López-Gómez \cite{BeltranCarrascoFerizovicLopezGomez2025} to distribute points in $SO(3)$.

In the rest of this article, we develop the theoretical framework for this lifting technique and analyse the logarithmic energy of the resulting configurations. 
We work with points sampled uniformly from $\S^2$, together with their antipodal counterparts.
For these point configurations, we obtain the asymptotic
\begin{equation*}
    \Esp[\Energy_{0}(\omega_N)]
    =
    \frac{-1}{4}N^2+CN,  
    \; \text{ where}\; \omega_N=\{x_1,\ldots,x_N\} \subset \S^3,
\end{equation*}
with $C$ a constant depending on the family of points.
One can check that the first term of the asymptotic expansion is correct, but the second term is already of the wrong order (see \cref{expansion}).
A comparison of the linear-term constant $C$ for the different families of uniform points is given in \cref{FiguraUniforme}.
\begin{table}[H]
    \centering
{\renewcommand{\arraystretch}{1.3}
\begin{tabular}{ll}
    Distribution & Constant on the linear term \\ 
    \hline \hline
    Uniform $\S^3$ & $1/4$\\
    Uniform $\S^3$ and symmetry &  $\frac{1}{2}-\log2=-0.193...$\\
    Uniform $\S^2$ and fibres &  $1-2\log2=-0.386...$\\
    Uniform $\S^2$, symmetry and fibres \qquad  &  $\frac{7}{2}(1-\log2)-\log7=-0.871...$
\end{tabular}}
    \caption{Linear-term coefficients for different uniform distributions.}
    \label{FiguraUniforme}
\end{table}
We also study other types of random process, known as determinantal point processes.
In particular, we consider the Harmonic ensemble on the $2$-dimensional sphere introduced in \cite{beltran_energy_2016}, and the Spherical ensemble introduced in \cite{krishnapur_random_2006}.
For the latter, we obtain the main theorem of the paper.
Finally, we study point sets coming from the Diamond ensemble, as defined in \cite{BeltranEtayo2020}.

\subsubsection{Notation}

We refer to a sequence of sets of points $\{\omega_r\}_{r\in\N}$ where each $\omega_r = \{x_1,\ldots,x_r\}$ is a set of $r$ points on the corresponding sphere as a \emph{family of points}.
In particular, we work with the following families:
\begin{itemize}
    \item $U_{\S^2}^{r}$ is a  set of $r$ uniformly distributed points on $\S^2$.
    \item $\bar{U}_{\S^2}^{r}$ is a  set consisting of $r/2$ uniformly distributed points on $\S^2$ and their $r/2$ antipodal counterparts.
    \item $\Harmonicensemble{r}$ is a set of $r$ points on $\S^2$ following the distribution of the Harmonic ensemble.
    \item $ \Sphericalensemble{r}{}$ is a set of $r$ points on $\S^2$ following the distribution of the Spherical ensemble.
    \item $\diamond(r)$ is a set of $r$ points on $\S^2$ following the distribution of the Diamond ensemble.
\end{itemize}
We use the symbol $\uparrow_{H}^{k}(\omega_r)$ to denote the set  of $N=kr$ points obtained by the procedure of considering $k$ roots of unity on the fibre corresponding to each point in $\omega_r$. 
For example, if we fibre $r$ points coming from the Spherical ensemble in $\S^2$ and take $k$ points in each fibre, we denote it by $\HopfSphericalensemble{k}{r}$.

\subsubsection{A new bound for the minimal logarithmic energy on $\S^3$}

The \emph{Spherical ensemble} is a determinantal point process on the Riemann sphere that arises in random matrix theory.  
It was first obtained by stereographically projecting the generalized eigenvalues of two independent Ginibre matrices onto the sphere \cite{krishnapur_random_2009}.  
This construction yields a rotationally invariant distribution of $N$ points on $\S^2$.  
In particular, the Spherical ensemble has a joint density proportional to the product of squared pairwise distances (a spherical Vandermonde determinant), which is the hallmark of determinantal point processes and leads to strong mutual repulsion between points.  
Consequently, the points tend to spread out uniformly on the sphere.
Despite being random, this ensemble exhibits nearly optimal uniformity in the sense of potential theory: it achieves very low discrepancy and near-minimal Riesz energy, on par with (and in some cases outperforming) the best deterministic point configurations \cites{Kuijlaars1998,Alishahi2015,brauchart_next-order_2012}.  

In this paper, we prove that the expected logarithmic energy of points coming from the fibred Spherical ensemble provides the best upper bound for the minimal logarithmic energy on $\mathbb{S}^3.$

\begin{theorem}\label{thm:main}
Let $\frac{A}{B}$ be a rational approximation of $\frac{3^2\pi}{4^3}\approx 0.44\ldots$. 
Let $\HopfSphericalensemble{k}{r}$ be a set of $N = kr$ points obtained by taking $k$ points  equally spaced, randomly rotated in each fibre $\S^1$ of $\S^3$, with respect to $r = \frac{A}{B}k^2$ points drawn from the Spherical ensemble $\Sphericalensemble{r}$ in $\mathbb{S}^2$.
Then,
\begin{multline*}
\Esp_{\omega_N \sim\HopfSphericalensemble{k}{r}} 
\left[
\Energy_{0}(\omega_N)
\right]
\\
=
-\frac{N^2}{4}
-\frac{1}{3}N\log N
+ N\left(
\frac{\sqrt{\pi}}{4}
\left(\frac{B}{A}\right)^{1/2}
-
\log\left(\left(\frac{B}{A}\right)^{1/3}\right)
\right)
+
o(N)
.    
\end{multline*}
In particular, the coefficient of the linear term tends to $\frac{1}{3}\left(2+\log\frac{9\pi}{64}\right)$ as $\frac{A}{B}$ more closely approximates $\frac{3^2\pi}{4^3}$.
\end{theorem}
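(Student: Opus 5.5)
The plan is to reduce everything to an exact formula for the energy between two fibres, and then to a single linear statistic of the Spherical ensemble. Write the fibred configuration as $\{e^{i(\theta_j+2\pi l/k)}x_j\}$, where $j\le r$ and $l<k$, with $x_j\in h^{-1}(p_j)$ and $\theta_j$ i.i.d.\ uniform. The energy splits into the same-fibre part and the different-fibre part.

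\textbf{Same-fibre part.} Each fibre is a great circle carrying $k$ equally spaced points. The classical identity $\prod_{l=1}^{k-1}|1-e^{2\pi i l/k}|=k$ gives a contribution of $-k\log k$ per fibre, so $-rk\log k=-N\log k$ in total.

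\textbf{Two distinct fibres.} Let $x\in h^{-1}(p)$ and $y\in h^{-1}(q)$. Using the Hermitian product on $\C^2$,
\[
\|e^{i\alpha}x-e^{i\beta}y\|^2=2-2a\cos(\alpha-\beta+\varphi),
\]
where $a=|\langle x,y\rangle_{\C}|=\cos(d/2)$, $d$ is the spherical angle between $p$ and $q$, and $\varphi$ is fixed. I will write
\[
2-2a\cos\phi=\lambda\,(1+b^2-2b\cos\phi),
\]
with $\lambda b=a$ and $\lambda(1+b^2)=2$. This gives $\lambda=1+\sqrt{1-a^2}=1+\|p-q\|/2$ and $b<1$.

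The root-of-unity identity $\prod_{l}(1+b^2-2b\cos(\phi+2\pi l/k))=|1-b^ke^{ik\phi}|^2$ then shows that the $k^2$ cross distances contribute
\[
-\tfrac{k^2}{2}\log\lambda-\tfrac{k}{2}\log|1-b^ke^{ik\phi}|^2 .
\]
Averaging over the independent random rotation, Jensen's formula gives $\int\log|1-b^ke^{i\psi}|^2\,d\psi/2\pi=0$ since $b<1$. Hence, conditionally on the base points,
\[
\Esp[E_0]=-N\log k-\frac{k^2}{2}\sum_{i\neq j}\log\Bigl(1+\frac{\|p_i-p_j\|}{2}\Bigr).
\]

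\textbf{Expectation over the Spherical ensemble.} I will use its two-point correlation, $\rho_2(p,q)=(r/4\pi)^2\bigl(1-(1-\|p-q\|^2/4)^{r-1}\bigr)$. Recall that $\|p-q\|=t$ has density $t/2$ on $[0,2]$ under the uniform measure, and set $s=t/2$.

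The uncorrelated term is $r^2\cdot 2\int_0^1 s\log(1+s)\,ds=r^2/2$. Multiplied by $-k^2/2$, it gives exactly $-N^2/4$.

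The correction term is $2r^2\int_0^1 s\log(1+s)(1-s^2)^{r-1}\,ds$. This is a Laplace-type integral concentrated at $s\sim r^{-1/2}$. Replacing $\log(1+s)$ by $s$ gives $2r^2\cdot\frac{\Gamma(3/2)\Gamma(r)}{2\Gamma(r+3/2)}=\frac{\sqrt\pi}{2}r^{1/2}+O(r^{-1/2})$. The next term, $-s^2/2$, contributes $O(1)$ after multiplying by $r^2$. The correction to the energy is therefore $+\frac{\sqrt\pi}{4}k^2r^{1/2}+O(k^2)$.

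\textbf{Scaling.} With $r=\frac{A}{B}k^2$ we have $k^2r^{1/2}=N(B/A)^{1/2}$, so the correction is $\frac{\sqrt\pi}{4}(B/A)^{1/2}N$. The remainder is $O(k^2)=O(N^{2/3})=o(N)$. Also $k=(BN/A)^{1/3}$, so
\[
-N\log k=-\tfrac13 N\log N-N\log\bigl((B/A)^{1/3}\bigr).
\]
This yields the stated expansion.

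\textbf{The limiting constant.} I will minimise $g(x)=\frac{\sqrt\pi}{4}x^{1/2}-\frac13\log x$ over $x=B/A$. The critical point is $x^{1/2}=\frac{8}{3\sqrt\pi}$, i.e.\ $x=\frac{64}{9\pi}$, so $A/B=\frac{9\pi}{64}$. At this point $g=\frac{2}{3}+\frac13\log\frac{9\pi}{64}$, and continuity of $g$ gives the final claim.

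\textbf{Main obstacle.} I expect the most care to be needed in the Laplace asymptotics of the correction integral, namely making the error uniformly $O(1)$ rather than merely $o(r^{1/2})$. The integrability of the logarithm at coincident base points is not a problem, because $\log(1+\|p-q\|/2)$ is bounded. The integrality of $r$ and $k$ is handled by taking $k$ a multiple of $B$.
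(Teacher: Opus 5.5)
Your proposal is correct and follows essentially the paper's route: the same-fibre/cross-fibre split, averaging over the independent fibre rotations to reduce the cross term to $-\frac{k^2}{2}\sum_{i\neq j}\log\bigl(1+\|p_i-p_j\|/2\bigr)$, and splitting the two-point correlation integral into the uncorrelated part $-N^2/4$ and the correction $\frac{\sqrt{\pi}}{4}k^2\sqrt{r}-\frac{k^2}{4}+O(k^2r^{-1/2})$. The differences are only technical: you use the root-of-unity product plus Jensen where the paper applies the formula of \cref{lema_ruso} to each pair, and you expand the correction integral asymptotically where the paper integrates by parts to get the exact value $k^2\frac{\sqrt{\pi}\,\Gamma(r+1)}{4\Gamma(r+1/2)}-\frac{k^2}{4}$; that exact evaluation, or simply the bound $|\log(1+s)-s|\le s^2/2$, removes the obstacle you flagged.
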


\begin{remark}
Note that for a given number of points $k$ we have to find a suitable approximation $\frac{A}{B}$ that makes $r$ an integer number.
\end{remark}

This leads us to the following upper bound for the minimal logarithmic energy on $\S^3$.

\begin{corollary}\label{main:cor}
For an infinite sequence of positive integer numbers $\{n_i\}_{i\in\N}$, the following upper bound holds:
\begin{equation*}
\min_{\omega_{n_i} \subset \S^3} \Energy_{0}(\omega_{n_i})
\leq
-\frac{N^2}{4}
-\frac{1}{3}N\log N
+C_S N
+ o(N),
\end{equation*}
with
\begin{equation*}
C_S
=
\frac{1}{3}\left(2+\log\frac{9\pi}{64}\right).
\end{equation*}
\end{corollary}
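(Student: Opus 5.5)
The corollary should follow directly from \cref{thm:main}, since a minimum is bounded above by any expectation. Fix a sequence of rational approximations $\frac{A_j}{B_j}\to\frac{3^2\pi}{4^3}$. For each $j$, choose $k$ to be a multiple of $B_j$, so that $r=\frac{A_j}{B_j}k^2$ is an integer. This gives the admissible sizes $N=kr=\frac{A_j}{B_j}k^3$. For every such $N$, the random configuration $\HopfSphericalensemble{k}{r}$ has, almost surely, $N$ distinct points on $\S^3$: base points of the Spherical ensemble are almost surely distinct, distinct base points have disjoint fibres, and points on a single fibre are distinct roots of unity. Hence
\begin{equation*}
\min_{\omega_N\subset\S^3}\Energy_0(\omega_N)\le \Esp_{\omega_N\sim\HopfSphericalensemble{k}{r}}\left[\Energy_0(\omega_N)\right],
\end{equation*}
and \cref{thm:main} supplies the asymptotic expansion of the right-hand side, with linear coefficient
\begin{equation*}
c(A_j/B_j)=\frac{\sqrt{\pi}}{4}\left(\frac{B_j}{A_j}\right)^{1/2}-\frac13\log\frac{B_j}{A_j}.
\end{equation*}

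Next I would check that this coefficient is optimised at the stated ratio. Write $x=B/A$ and $g(x)=\frac{\sqrt\pi}{4}x^{1/2}-\frac13\log x$. Then $g'(x)=0$ gives $x^{1/2}=\frac{8}{3\sqrt\pi}$, that is, $x=\frac{64}{9\pi}$. At this point $g=\frac{2}{3}+\frac13\log\frac{9\pi}{64}=C_S$, and $g$ is continuous, so $c(A_j/B_j)\to C_S$.

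The remaining issue, which I expect to be the only real obstacle, is to pass from a family of coefficients converging to $C_S$ to a single sequence $n_i$ with coefficient exactly $C_S$ plus $o(N)$. I would use a diagonal argument. For each $j$, \cref{thm:main} gives a threshold $K_j$ such that, for all $k\ge K_j$ divisible by $B_j$, the error term is at most $N/j$. Then I pick $k_j\ge K_j$ with $k_j$ increasing, and set $n_j=\frac{A_j}{B_j}k_j^3$. Along this sequence,
\begin{equation*}
\min\Energy_0\le -\frac{n_j^2}{4}-\frac13 n_j\log n_j+C_S n_j+\bigl(|c(A_j/B_j)-C_S|+1/j\bigr)n_j.
\end{equation*}
The bracketed term tends to $0$, so the total error is $o(n_j)$.

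If instead one reads \cref{thm:main} with $A/B$ fixed, the corollary holds for each fixed approximation with constant $c(A/B)$, and this constant can be made arbitrarily close to $C_S$. The diagonal sequence above upgrades that statement to the one claimed in the corollary.
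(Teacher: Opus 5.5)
Your proof is correct. It differs from the paper's in how it handles the limiting step.

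The paper fixes one explicit sequence: $B_i=10^i$, $A_i=\lfloor K_S10^i\rfloor$ with $K_S=\frac{9\pi}{64}$, and $k_i=B_i$, so $n_i=A_ik_i^2$. It then substitutes straight into \cref{thm:main} and replaces the linear coefficient by $C_S+o(1)$. Because $k_i$ is tied to the approximation index, this tacitly requires the $o(N)$ remainder of \cref{thm:main} to be uniform in $A/B$. The theorem as stated, for a fixed ratio, does not assert this. It is nonetheless true: by \eqref{E_dpp_sin_simplificar} and \cref{prop:01}, the remainder is $-\frac{k^2}{4}+k^2\mathcal{O}(r^{-1/2})=\mathcal{O}(N^{2/3})$, with constants uniform for $A/B$ in compact subsets of $(0,\infty)$.

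Your diagonal choice $k_j\ge K_j$ uses \cref{thm:main} only as a black box for each fixed ratio, so it needs no uniformity at all. The price is a non-explicit sequence and no rate. The paper's explicit sequence, once uniformity is checked, does give a rate. Since $K_S$ is a critical point of the coefficient (as your computation of $g'$ shows), $|c(A_i/B_i)-C_S|=\mathcal{O}(10^{-2i})=\mathcal{O}(n_i^{-2/3})$, so the $o(n_i)$ term is in fact $\mathcal{O}(n_i^{2/3})$.

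You also make explicit two points the paper leaves implicit: that the minimum is at most the expectation, and that the lifted configuration almost surely has $N$ distinct points. Finally, your coefficient function is the one consistent with \cref{thm:main}. The function $f(x)=\frac{\sqrt{\pi}}{4}x^{3/2}-\log x$ in the paper's proof is a slip; in the variable $x=A/B$ it should read $\frac{\sqrt{\pi}}{4}x^{-1/2}+\frac{1}{3}\log x$. This does not affect the conclusion.
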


We recall that, to date, the best result is the one obtained by the authors in \cite{beltran_energy_2016}, presented in \cref{eq:previousbound}.
The asymptotic expansions given by both families match up to the linear term, as both correctly reproduce the terms in $N^2$ and $N\log N$ seen in  \cref{expansion}. 
Comparing the coefficients of the linear terms in both expansions, we have:
\begin{equation*}
C_S
=\frac{1}{3}\left(2+\log\frac{9\pi}{64}\right)
=0.39... 
<
C_H
=\frac{1}{3}\log \frac{1}{3}+\log 2+\psi_0\left(\frac{3}{2}\right)+\frac{1}{3}
=0.70...   
\end{equation*}
This allows us to conclude that the Spherical ensemble, in conjunction with the Hopf fibration, provides a point process on $\S^3$, $\HopfSphericalensemble{k}{r}$, with asymptotically lower logarithmic energy than the Harmonic ensemble, resulting in the configuration of points with the lowest logarithmic energy proved to date.

\subsubsection{The lifted Harmonic ensemble}

The Harmonic ensemble provides a natural and well-studied determinantal point process on $\mathbb{S}^2$, and its invariance properties make it a convenient benchmark for energy comparisons after lifting via the Hopf fibration. The point configurations obtained in this way reproduce both the leading and second-order terms in the asymptotic expansion of the logarithmic energy. However, as shown in \cref{cor:Harm:asymp}, the third-order term deviates from the expected behaviour: instead of being linear, it exhibits an additional $\log\log N$ factor. This deviation indicates a limitation of the lifting procedure when applied to the Harmonic ensemble, particularly in capturing the finer asymptotic structure.

\subsubsection{The lifted Diamond ensemble}

Among the different families of $N$ points that have been proposed on the $2$-dimensional sphere, the \emph{Diamond ensemble} introduced by Beltrán and Etayo \cite{BeltranEtayo2020} and then refined in \cite{BeltranEtayoLopezGomez2023} presently attains the lowest known logarithmic energy.   
The theoretical lower bound for the minimal logarithmic energy of $N$ points on $\S^{2}$ behaves asymptotically as
\begin{equation}\label{minlog2}
\Bigl(\tfrac12-\log 2\Bigr)N^{2}\;-\;\tfrac12 N\log N\;+\;C\,N,
\; C = -0.0568456\ldots, \; \text{see \cite{Marzo2025}.}
\end{equation}
Configurations drawn from the Diamond ensemble are $N\times0.0076\ldots$ far away from this rate and thereby outperform all other deterministic and random constructions currently available.

A configuration with $N$ points is built as follows.  
First, select $K$ parallels of latitude $z_k$ and allocate to the $k$th parallel $r_{k}$ points such that $\sum_{k=1}^{K} r_{k}=N-2$.
The $r_{k}$ points on the $k$th parallel are placed at equal angular separations in longitude, and the entire parallel is then rotated by a random phase chosen uniformly in $[0,2\pi)$.  
Finally, the North and South poles are appended, yielding a total of $N$ points.

Motivated by its success on $\S^{2}$, we attempted to transplant the Diamond ensemble to the $3$-dimensional sphere via the Hopf fibration.  
Despite the effort, the fibre-lifting technique did not lead to tractable analytic expressions for the resulting logarithmic energy, as shown in \cref{calculo_Diamond}.  
We therefore report here solely on the numerical evidence, which nevertheless offers meaningful insight into the performance of the lifted Diamond configurations on $\S^{3}$.
%
%
This has been carried out using \cite{noauthor_sympy_2017} from Python. 
The Numba library \cite{lam_numba_2015} has also been employed to accelerate computations by translating part of the code to C.

When transitioning to the sphere $\S^3$ using the Hopf fibration, one parameter whose optimal value is unknown is the number of points per fibre, denoted as $k$.
Various options have been experimented with for choosing $k$, and it has been found to be optimal to take $k=p^\alpha$, where $p$ is the number of parallels (a value that determines the other parameters of the Diamond ensemble). Multiple simulations have been conducted for different values of $\alpha$, with the optimal value being approximately $\alpha\approx 1.2$.

\begin{figure}[htbp]
    \centering
    \includegraphics[width=1\textwidth]{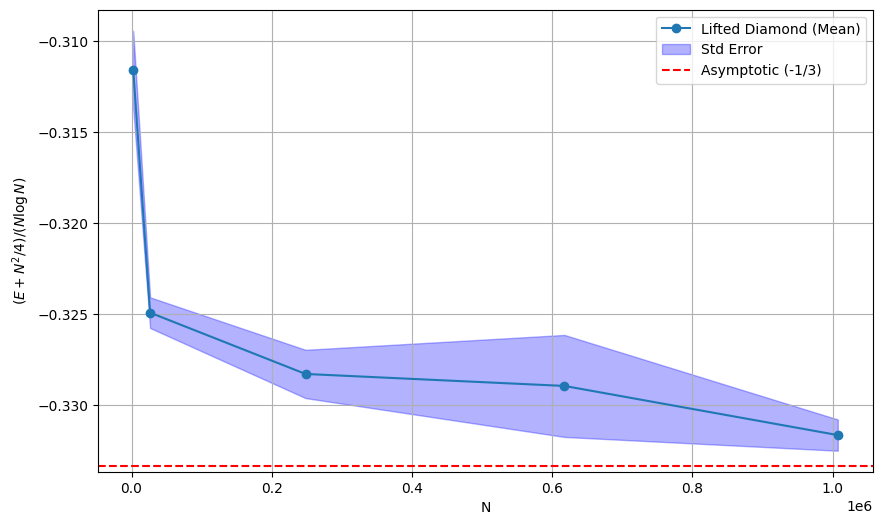}
    \caption{$\left(\Esp_{\omega_N\sim\HopfDiamond{k}{r}} \left[E_0(\omega_N)\right]+\frac{N^2}{4}\right)/N\log N$ plotted against the number of points, where $\Esp_{\omega_N\sim\HopfDiamond{k}{r}} \left[E_0(\omega_N)\right]$ represents the expected value of the logarithmic energy of the lifted configuration of points coming from the Diamond ensemble through the Hopf fibration. Averaged over 5 runs with different random seeds.}
    \label{fig:primer_orden}
\end{figure}

As shown in \cref{fig:primer_orden,fig:sec_orden}, for $\alpha = 1.2$ the coefficient of the $N\log N$ term in the asymptotic expansion of the energy tends to $-1/3$, in agreement with \cref{expansion}, while the coefficient of the linear term appears to approach $0$, which compares favourably with the other constructions considered in this work.

\begin{figure}[htbp]
    \centering
    \includegraphics[width=1\textwidth]{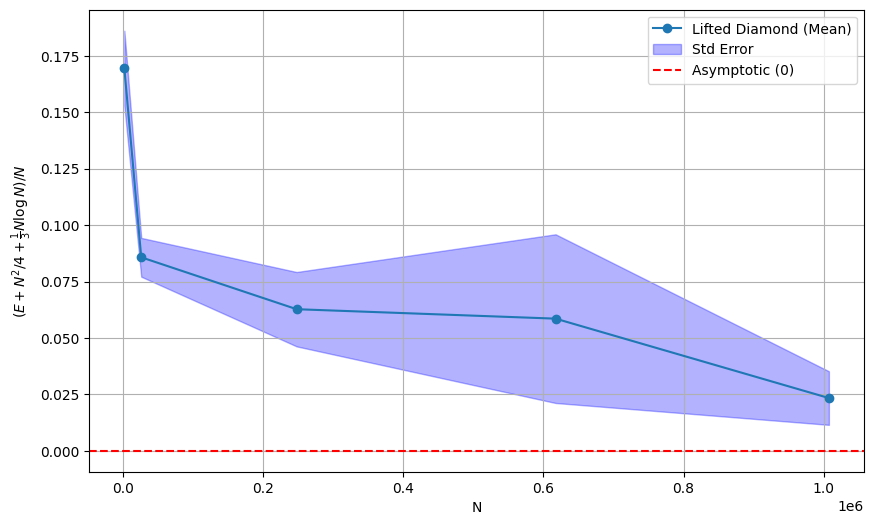}
    \caption{$\left(\Esp_{\omega_N\sim\HopfDiamond{k}{r}} \left[E_0(\omega_N)\right]+\frac{N^2}{4}+\frac{N\log N}{3}\right)/N$ plotted against the number of points, where $\Esp_{\omega_N\sim\HopfDiamond{k}{r}} \left[E_0(\omega_N)\right]$ represents the expected value of the lifted configuration of points coming from the Diamond ensemble through the Hopf fibration. Averaged over 5 runs with different random seeds.}
    \label{fig:sec_orden}
\end{figure}

\subsubsection{Organization of the paper}

\begin{description}
  \item[Section 1: Introduction and main results]  We state the logarithmic energy problem on \(\mathbb{S}^{3}\), introduce the Hopf fibration lifting strategy, and summarize the principal analytic and numerical findings.
  \item[Section 2: Uniformly distributed points] We analyse energies for configurations of points obtained from uniform distributions on \(\mathbb{S}^{3}\) and from uniform points on \(\mathbb{S}^{2}\) lifted by Hopf fibres.
  \item[Section 3: Determinantal Point Processes] We develop the potential-theoretic framework for determinantal point processes (Spherical ensemble, Harmonic ensemble, etc.) and derive their energy expansions after lifting.
  \item[Section 4: Proof of the main results] We collect auxiliary lemmas and give full proofs of the asymptotic bounds, including the new upper bound for the minimal logarithmic energy on $\S^3$.
  \item[Appendix A: The Diamond ensemble] We recall the construction of the Diamond ensemble on \(\mathbb{S}^{2}\), detail its fibre lifting to \(\mathbb{S}^{3}\), and report numerical results.
\end{description}

\section{Uniformly distributed points}\label{sec:unif}

We can sample i.i.d. uniformly distributed points in $\S^3$ by taking $(x,y,z,t)\subset\R^4$ such that
\begin{equation}\label{eq:unifdistS3}
\begin{cases}
x=\sqrt{1-v^2}\sqrt{1-u^2}\cos\phi \\
y=\sqrt{1-v^2}\sqrt{1-u^2}\sin\phi \\
z=\sqrt{1-v^2}u \\
t=v
\end{cases}
\text{ with }
\begin{cases}
\phi \in [0, 2\pi] \text{ uniform},\\
u \in [-1, 1] \text{ uniform},\\
v \in [-1, 1] \text{ with density } f_\mathbb{V}(v),\\
\end{cases}    
\end{equation}
where
\begin{equation*}
    f_\mathbb{V}(v)=\frac{2}{\pi}\sqrt{1-v^2}.
\end{equation*}

\begin{lemma}\label{lem:unif:S3}
Let $\omega_N=\{\mathbf{p}_1,\ldots,\mathbf{p}_N\}$ be a set of $N$ i.i.d. uniformly distributed points in $\S^3$, then:
\begin{equation*}
\Esp_{\substack{\phi_i, u_i, v_i, \\  1\leq i \leq N}}
[\Energy_{0}(\omega_N)]
=\frac{-1}{4}N^2+\frac{1}{4}N,    
\end{equation*}
where $\mathbf{p}_i = (x_i,y_i,z_i,t_i)$ are as in  \cref{eq:unifdistS3}.
\end{lemma}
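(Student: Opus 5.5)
By linearity of expectation and independence, the expected energy reduces to a single pair integral:
\begin{equation*}
\Esp[\Energy_0(\omega_N)] = N(N-1)\, I, \qquad I := \Esp_{\mathbf p,\mathbf q}\left[\log\frac{1}{\|\mathbf p-\mathbf q\|}\right],
\end{equation*}
where $\mathbf p,\mathbf q$ are independent and uniform on $\S^3$. The statement then follows from $I=-\tfrac14$, since $N(N-1)\left(-\tfrac14\right) = -\tfrac14 N^2+\tfrac14 N$.

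To compute $I$, we use rotation invariance. Conditioning on $\mathbf p$, the inner integral does not depend on $\mathbf p$, so we may fix $\mathbf p=(0,0,0,1)$. With $\mathbf q$ parametrised as in \cref{eq:unifdistS3}, we get
\begin{equation*}
\|\mathbf p-\mathbf q\|^2 = 2-2v.
\end{equation*}
Thus only the coordinate $v$ with density $f_{\mathbb V}$ matters, and
\begin{equation*}
I=-\frac{1}{2}\int_{-1}^{1}\log(2-2v)\,\frac{2}{\pi}\sqrt{1-v^2}\,dv .
\end{equation*}
The variables $\phi$ and $u$ integrate out trivially.

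The remaining step is to evaluate this one-dimensional integral. Substitute $v=\cos\theta$, $\theta\in[0,\pi]$, and use $2-2\cos\theta = 4\sin^2(\theta/2)$. This gives
\begin{equation*}
I=-\frac{2}{\pi}\int_0^{\pi}\log\!\left(2\sin\tfrac{\theta}{2}\right)\sin^2\theta\,d\theta .
\end{equation*}
Write $\sin^2\theta=\tfrac12(1-\cos 2\theta)$ and invoke the classical Fourier expansion
\begin{equation*}
\log\left(2\sin\tfrac{\theta}{2}\right)=-\sum_{m\ge1}\frac{\cos m\theta}{m}.
\end{equation*}
The constant part contributes $\int_0^\pi \log(2\sin\tfrac\theta2)\,d\theta = 0$. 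Only the $m=2$ mode survives against $\cos 2\theta$, by orthogonality on $[0,\pi]$:
\begin{equation*}
-\frac12\int_0^\pi \log\left(2\sin\tfrac\theta2\right)\cos2\theta\,d\theta = -\frac12\cdot\left(-\frac{1}{2}\right)\cdot\frac{\pi}{2} = \frac{\pi}{8}.
\end{equation*}
Hence $I=-\frac{2}{\pi}\cdot\frac{\pi}{8}=-\frac14$. As a cross-check, this agrees with the known continuous logarithmic energy of $\S^3$, namely $\tfrac12\psi(2)-\tfrac12\psi(1)-\log 2 + \log 2\ldots$; more simply, $\log 2-\tfrac12\left(\psi(3)-\psi(3/2)\right)=\log2-\tfrac12\left(\tfrac32-\gamma+\gamma+2\log2-2\right)=-\tfrac14$.

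I expect the main (though mild) obstacle to be this closed-form evaluation, in particular handling the integrable logarithmic singularity at $\theta=0$. The Fourier-series argument handles it cleanly, provided one justifies exchanging sum and integral, for instance via $L^2$ convergence of the series. An alternative route is to differentiate $\int (2-2v)^{s}\sqrt{1-v^2}\,dv$, which is a Beta function, at $s=0$.
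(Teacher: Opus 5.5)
Your proof is correct and has the same structure as the paper's. You reduce to $N(N-1)$ copies of one pair integral, fix $\mathbf p=(0,0,0,1)$ so that $\|\mathbf p-\mathbf q\|^2=2-2v$, and substitute $v=\cos\theta$.

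You differ only in how you evaluate $\int_0^\pi\sin^2\theta\,\log(2-2\cos\theta)\,d\theta$. The paper splits off $-\log 2$, sets $\varphi=\theta/2$, and differentiates the Beta integral $\int_0^{\pi/2}\sin^{2a}\varphi\cos^{2b}\varphi\,d\varphi=\tfrac12B(a+\tfrac12,b+\tfrac12)$ in $a$ at $a=b=1$. The value then comes out of $\psi(3/2)$ and $\psi(3)$. (The bracket there should read $\psi(a+\tfrac12)-\psi(a+b+1)$; the paper prints a $+$, although its stated value $\pi(1-4\log 2)/32$ is right.)

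You instead use the Fourier series of $\log(2\sin\tfrac{\theta}{2})$, together with the fact that $\sin^2\theta=\tfrac12(1-\cos 2\theta)$ is a trigonometric polynomial. Only two Fourier coefficients are then needed, and no special functions or digamma bookkeeping appear. The interchange of sum and integral is harmless, since pairing an $L^2$ function with a trigonometric polynomial only involves finitely many of its Fourier coefficients.

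The price is generality. Your trick is clean only when the weight $\sin^{d-1}\theta$ is a finite cosine polynomial, i.e.\ for odd $d$. The Beta-derivative route gives the continuous logarithmic energy $\tfrac12(\psi(d)-\psi(d/2))-\log 2$ on every $\S^d$ at once.

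Your cross-check, which is not part of the argument, has a sign error. As written, $\log 2-\tfrac12(\psi(3)-\psi(3/2))=\log 2-\tfrac12(2\log 2-\tfrac12)=+\tfrac14$, not $-\tfrac14$. The correct expression is $\tfrac12(\psi(3)-\psi(3/2))-\log 2=-\tfrac14$, which does agree with your value of $I$. The garbled first expression in that sentence should simply be deleted.
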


\begin{proof}
Using the notation from \cref{eq:unifdistS3}, the distance between two random i.i.d. uniformly distributed points is
\begin{multline*}
||\mathbf{p}_i-\mathbf{p}_j|| 
\\
=\left[2-2\sqrt{1-v_i^2}\sqrt{1-v_j^2}\left(\sqrt{1-u_i^2} \sqrt{1-u_j^2}\cos(\phi_i-\phi_j)+u_iu_j\right)-2v_iv_j\right]^{1/2}.
\end{multline*}
$\S^3$ is a 2-point homogeneous space, that is, for every $\mathbf{p}_1, \mathbf{p}_2 ,\mathbf{q}_1, \mathbf{q}_2\in\S^3$ such that $\|\mathbf{p}_1- \mathbf{p}_2\| = \|\mathbf{q}_1- \mathbf{q}_2\|$ there exists an isometry of $\S^3$, that we denote by $i$, such that $i(\mathbf{p}_1)=\mathbf{q}_1$ and $i(\mathbf{p}_2)=\mathbf{q}_2$.
Hence we can simplify the computations of the expected value of the distance by taking $\mathbf{p}_i=(0,0,0,1)$. 
Then, for a fixed $j$,
\begin{multline}\label{eq:01}
    \Esp_{\phi_j, u_j, v_j}[-\log||(0,0,0,1)-\mathbf{p}_j||]
    \\
    =-\int_{-1}^1\frac{2}{\pi}\sqrt{1-v_j^2}\log(\sqrt{2-2v_j})dv_j
    \\
    =-\frac{1}{\pi}\int_{0}^{\pi}\sin^2 \theta\ln(2-2\cos\theta)d\theta
    \\
    =-\ln(2)
    - \frac{2}{\pi}\int_{0}^{\pi}\sin^2 \theta\ln \left(\sin\frac{\theta}{2}\right) d\theta
    ,
\end{multline}
after the appropriate change of variables $v_j = \cos\theta$.
For the integral, taking $\varphi=\frac{\theta}{2}$ and using that $\sin(2\varphi)=2\sin\varphi\cos\varphi$, we have
\begin{equation*}
    - \frac{2}{\pi}\int_{0}^{\pi}\sin^2 \theta\ln \left(\sin\frac{\theta}{2}\right) d\theta
    = 
    - \frac{2}{\pi}\int_{0}^{\pi/2}4\sin^2 \varphi\cos^2\varphi \ln \left(\sin\varphi\right) 2d\varphi.
\end{equation*}
The last integral resembles an incomplete beta function
\begin{equation*}
    I(a,b)
    =
    \int_{0}^{\pi/2} \sin^{2a}\varphi \cos^{2b}\varphi d\varphi
    =
    \frac{1}{2}B\left(a+\frac{1}{2},b+\frac{1}{2}\right).
\end{equation*}
If we differentiate the last expression with respect to $a$, we obtain
\begin{multline*}
    \frac{\partial I}{\partial a}(a,b)
    =
    \int_{0}^{\pi/2} 2\ln(\sin \varphi)\sin^{2a}\varphi \cos^{2b}\varphi d\varphi
    \\
    =
    \frac{1}{2}B\left(a+\frac{1}{2},b+\frac{1}{2}\right)
    \left[ \psi\left(a+\frac{1}{2}\right) + \psi\left(a+b+1\right)
    \right].    
\end{multline*}
It only remains to evaluate the last expression at $a=1$ and $b=1$:
\begin{equation*}
    \frac{\partial I}{\partial a}(1,1)
    =
    \frac{\pi(1-4\ln 2)}{32}.
\end{equation*}
We conclude with 
\begin{equation*}
    \Esp_{\phi_j, u_j, v_j}[-\log||(0,0,0,1)-\mathbf{p}_j||]
    =
    -\frac{1}{4}.
    \end{equation*}
The expected value of the logarithmic energy for $N$ points will then be
\begin{equation*}
\Esp_{\substack{\phi_i, u_i, v_i, \\  1\leq i \leq N}}
[\Energy_{0}(\omega_N)]
=
\frac{-1}{4}N(N-1) =\frac{-1}{4}N^2+\frac{1}{4}N,    
\end{equation*}
where $\omega_N=\{\mathbf{p}_1,\ldots,\mathbf{p}_N\}$ and $\mathbf{p}_i = (x_i,y_i,z_i,t_i)$ are as in  \cref{eq:unifdistS3}.
\end{proof}

We reason as follows: given a single point on the sphere, the point and the antipodal point (obtained through central symmetry on the sphere) form the best possible distribution of two points. Furthermore, for the energy calculation with a third point, each of the points can be treated as randomly and uniformly distributed points. Therefore, this suggests that generating $N/2$ points and then applying central symmetry should provide better-distributed points.

\begin{lemma}
Consider $M$ i.i.d.\ uniformly distributed points on $\S^3$, and let $\omega_N$ denote the set of $N = 2M$ points consisting of these $M$ points together with their antipodal points. 
Then:
\begin{equation*}
\Esp_{\substack{\phi_i, u_i, v_i, \\  1\leq i \leq M}}
[\Energy_{0}(\omega_N)]
    =
  \frac{-1}{4}N^2+\left(\frac{1}{2}-\log2\right)N,  
\end{equation*}
where $\omega_N=\{\mathbf{p}_1,\ldots,\mathbf{p}_N\}$ and $\mathbf{p}_i = (x_i,y_i,z_i,t_i)$ are as in  \cref{eq:unifdistS3} and $\mathbf{p}_{M+i}$ is the antipodal point of $\mathbf{p}_i$ for $1\leq i \leq M$.
\end{lemma}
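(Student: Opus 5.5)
We split the ordered pairs $(i,j)$, $i\neq j$, of $\omega_N$ into two classes and compute each expectation separately. The first class is the $N=2M$ ordered pairs $\{\mathbf{p}_i,\mathbf{p}_{M+i}\}$ and $\{\mathbf{p}_{M+i},\mathbf{p}_i\}$ formed by a point and its own antipode. The second class is all remaining ordered pairs, of which there are $N(N-1)-N=N^2-2N$. For the first class the distance is deterministic, $\|\mathbf{p}_i-(-\mathbf{p}_i)\|=2$. These pairs therefore contribute exactly $-N\log 2$ to the energy, with no randomness involved.

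For the second class, every such pair has the form $\{\pm\mathbf{p}_i,\pm\mathbf{p}_j\}$ with $i\neq j$ among the original $M$ indices. Since $\mathbf{p}_i$ and $\mathbf{p}_j$ are independent and uniform on $\S^3$, and the antipodal map $x\mapsto -x$ is an isometry preserving the uniform measure, each of $\pm\mathbf{p}_i$ is uniform. Moreover, the two points of such a pair are independent, because they are built from different independent samples. Hence each pair is distributed as a pair of i.i.d.\ uniform points on $\S^3$. The computation in the proof of \cref{lem:unif:S3}, which uses two-point homogeneity to fix one point at $(0,0,0,1)$, then gives
\begin{equation*}
\Esp\bigl[-\log\|\mathbf{x}-\mathbf{y}\|\bigr]=-\tfrac14
\end{equation*}
for every such pair. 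By linearity of expectation the second class contributes $-\tfrac14(N^2-2N)$.

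Adding the two contributions gives
\begin{equation*}
-\tfrac14 N^2+\tfrac12 N-N\log 2=-\tfrac14N^2+\left(\tfrac12-\log2\right)N,
\end{equation*}
which is the claim. No step is analytically hard, since the only integral needed was already evaluated in \cref{lem:unif:S3}. The one point that needs care is the bookkeeping in the count of ordered pairs. In particular, one must check that each point has exactly one deterministic partner, its antipode. This holds almost surely, because the event $\mathbf{p}_i=\pm\mathbf{p}_j$ for some $i\neq j$ has probability zero. One must also check that the pairs $(\mathbf{p}_i,-\mathbf{p}_j)$, while correlated with other pairs, each have the correct marginal law. Correlations between different pairs do not matter, since only linearity of expectation is used.
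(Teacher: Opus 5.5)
Your proof is correct and follows the paper's argument. The paper likewise splits the energy into the $N$ ordered antipodal pairs, each contributing $-\log 2$, and the remaining $N(N-2)$ ordered pairs, each contributing $-\tfrac14$ by the computation in \cref{eq:01}; your remarks on the marginal law of the pairs $(\mathbf{p}_i,-\mathbf{p}_j)$ and on almost-sure distinctness just make explicit what the paper leaves implicit.
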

\begin{proof}
We take $M=N/2$ i.i.d. uniformly distributed points in $\S^3$.
Then, the asymptotic expansion of the logarithmic energy consists on the sum of two terms:
\begin{enumerate}
    \item The energy contributed by antipodal points.
    \item The energy contributed by non-antipodal points.
\end{enumerate}
Fixing a point $\mathbf{p}_i$, we have from \cref{eq:01} that
\begin{equation*}
\Esp_{\phi_i, u_i, v_i,\phi_j, u_j, v_j}
[-\log||\mathbf{p}_i-\mathbf{p}_j||]=\frac{-1}{4},   \end{equation*}
for $\mathbf{p}_j$ drawn uniformly at random from $\S^3$. Furthermore, for the antipodal point, we have:
\begin{equation*}
\Esp_{\phi_i, u_i, v_i}
[-\log||\mathbf{p}_i-(-\mathbf{p}_i)||]
=
-\log||2\mathbf{p}_i||=-\log2.    
\end{equation*}
Taking both terms into account, the asymptotic expansion of the logarithmic energy is
\begin{equation*}
\Esp_{\substack{\phi_i, u_i, v_i, \\  1\leq i \leq M}}
[\Energy_{0}(\omega_N)]
=
\frac{-1}{4}N(N-2)-N\log2 =\frac{-1}{4}N^2+\left(\frac{1}{2}-\log2\right) N.
\end{equation*}
\end{proof}

One can verify that this energy is lower than that of the fully uniform distribution on $\S^3$ (cf. \cref{lem:unif:S3}).


\subsection{Uniform distribution on $\S^2$ and the use of Hopf fibration}\label{sec:unif:hopf}

Now we use the uniform distribution in $\S^2$, whose points can be written as:
\begin{equation*}
(x,y,z)\in\R^3:
\begin{cases}
x=\sqrt{1-u^2}\cos\phi \\
y=\sqrt{1-u^2}\sin\phi \\
z=u 
\end{cases}
\text{ with }
\begin{cases}
\phi \in [0, 2\pi] \text{ uniformly},\\
u \in [-1, 1] \text{ uniformly}.\\
\end{cases}
\end{equation*}

\begin{proposition}
Let $\nu_M$ be a set of $M$ i.i.d. uniformly distributed points in $\S^2$.
For each point in $\nu_M$ let us consider a set of $k$ roots of unity on the fibre corresponding to the inverse of the point via the Hopf fibration.
Let $\omega_N$ be set of all $N=kM$ points obtained on $\S^3$ by this procedure that we denote by $\HopfUniform{k}{M}$.
Then:
\begin{equation*}
    \Esp_{\omega_N\sim\HopfUniform{k}{M}} \left[\Energy_{0}(\omega_N)\right]
    =
    \frac{-1}{4}N^2+(1-2\log 2)N.
\end{equation*}
\end{proposition}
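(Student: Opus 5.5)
The plan is to split the energy into \emph{intra-fibre} pairs (both points on the same Hopf circle) and \emph{inter-fibre} pairs (points on fibres over two different base points), and to compute each part exactly.

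\textbf{Intra-fibre pairs.} Each fibre $h^{-1}(y)$ is a great circle of $\S^3$, so the $k$ points on it form a regular $k$-gon of radius $1$. The classical identity $\prod_{j=1}^{k-1}\abs{1-e^{2\pi i j/k}}=k$ shows that every point contributes $-\log k$. The total intra-fibre energy is therefore the deterministic quantity $-Mk\log k=-N\log k$.

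\textbf{Inter-fibre pairs.} Fix two base points $y_1,y_2\in\S^2$ at spherical angle $d$, and set $c=\cos(d/2)$. Using the parametrization \cref{inversa_hopf}, or the complex model $\S^3\subset\C^2$, I would first show that two points on the two fibres with phases $s,t$ satisfy
\begin{equation*}
\norm{x-y}^2=2-2c\cos(s-t+\alpha),
\end{equation*}
for some offset $\alpha$ depending only on the two base points. Next, factor
\begin{equation*}
2-2c\cos x=\lambda\abs{1-\rho e^{ix}}^2,
\qquad
\lambda\rho=c,
\qquad
\lambda(1+\rho^2)=2,
\qquad
0\le\rho<1.
\end{equation*}
Taking the product over the $k$ roots of unity on the second fibre then gives, for a fixed point of the first fibre,
\begin{equation*}
\sum_{j}-\log\norm{x-y_j}=-\frac{k}{2}\log\lambda-\log\abs{1-\rho^k e^{ik\varphi}},
\end{equation*}
where $\varphi$ is the relative phase. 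I would then average over $\varphi$. If the relative phase is uniform, Jensen's formula (using $\rho<1$) makes the second term vanish in expectation. If the roots of unity are not randomly rotated, I would instead use the rotational invariance of the base distribution to argue that $\varphi$ is still effectively uniform.

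\textbf{Integrating over the base points.} It remains to integrate over $d$. For i.i.d.\ uniform points on $\S^2$, $\cos d$ is uniform on $[-1,1]$, so $c^2=(1+\cos d)/2$ is uniform on $[0,1]$. Solving for $\lambda$ in terms of $c$ reduces $\Esp[\log\lambda]$ to an elementary one-dimensional integral. Consistency with \cref{lem:unif:S3} (each individual point is uniform on $\S^3$) suggests that $-\tfrac{k}{2}\Esp\log\lambda$ equals $-\tfrac{k}{4}$ per point, up to exactly computable corrections; any $k$-dependent correction should surface in this step.

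\textbf{Assembly and expected obstacle.} Summing over the $M(M-1)$ ordered pairs of fibres, with $k$ points on each, gives an inter-fibre total of the form $-\tfrac14 k^2M(M-1)+\text{(correction)}$. Adding the intra-fibre term $-N\log k$ then yields $-\tfrac14N^2+\tfrac{k}{4}N-N\log k+\cdots$, which must be matched against $(1-2\log2)N$. I expect this final bookkeeping to be the main obstacle. The stated constant contains no $\log k$, so I would have to recheck carefully:
\begin{itemize}
\item whether $\Esp\log\abs{1-\rho^ke^{ik\varphi}}$ really vanishes, given how the phases are chosen;
\item the exact value of $\Esp\log\lambda$;
\item whether the statement implicitly fixes $k$ (for instance $k=2$, which would make $-N\log k+\tfrac{k}{4}N\cdot\ldots$ produce the $\log2$), or relates $k$ to $M$.
\end{itemize}
I would settle this by first computing the case of a fixed small $k$ explicitly, and then revisiting the general identity.
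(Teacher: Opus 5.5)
\textbf{There is a genuine gap: the proposal never reaches the stated constant.}

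Your decomposition matches the paper's. Your inter-fibre route is a valid variant: you use $\prod_{j}\abs{1-\rho e^{i(x+2\pi j/k)}}=\abs{1-\rho^k e^{ikx}}$ and Jensen, where the paper fixes $\mathbf{x}_i=(1,0,0,0)$ and applies \cref{lema_ruso}. However, you leave both decisive points open.

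First, there are no ``exactly computable corrections''. We have $\lambda=1+\sqrt{1-c^2}=1+\sin(d/2)$. The variable $w=\sin(d/2)$ has density $2w$ on $[0,1]$, and $\int_0^1 2w\log(1+w)\,dw=\tfrac12$. So $-\tfrac{k}{2}\Esp\log\lambda=-\tfrac{k}{4}$ exactly. More directly: with independent uniform phases, two points on distinct fibres are independent and uniform on $\S^3$, so \cref{lem:unif:S3} gives $-\tfrac14$ per ordered pair. Hence, for every $k$,
\[
\Esp\left[\Energy_0(\omega_N)\right]=-\frac{k^2M(M-1)}{4}-N\log k=-\frac{N^2}{4}+\Bigl(\frac{k}{4}-\log k\Bigr)N .
\]

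Second, and this is the real gap, your proposed resolution for the missing $\log k$ is wrong. The proposition is this formula at the optimal fibre size. The function $\phi(k)=\tfrac{k}{4}-\log k$ is strictly convex with $\phi'(k)=\tfrac14-\tfrac1k$, so its unique minimizer is $k=4$. There $\phi(4)=1-\log 4=1-2\log 2$. Since this is the strict minimum value, the claimed identity holds for $k=4$ and for no other $k$. The paper's proof obtains the statement exactly this way, by minimizing the general expression over $k$.

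Your guess $k=2$ gives $\tfrac12-\log 2$ instead. That is the constant of the antipodally symmetric uniform configuration on $\S^3$, as it must be, since two roots of unity on a great circle are antipodal. So you need to make the choice $k=4$ explicit and note that for any other $k$ the stated constant is false. As written, the proposal does not prove the statement.

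Your aside about fibres without a random rotation is also unjustified. For any continuous section, fibres over nearby base points have relative phase close to $0$, not uniform. It is unnecessary, though, since the model here uses uniformly random phases; the paper integrates over $t_j$.
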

\begin{proof}
    See \cref{calculo_uniforme_s2}.
\end{proof}

A symmetry argument similar to the one used with the uniform distribution on $\S^3$ can be deduced here. 
Two possibilities arise in this case. 
On one hand, central symmetry in $\S^3$ with points on the Hopf fibres could be used. 
However, since the fibres are great circles, central symmetry would place more points on the same fibre, which would go against the optimal choice of the number of points per fibre. 
Therefore, we chose to use symmetry in $\S^2$ to have better-distributed points on $\S^2$ and calculate the fibres afterwards. 

\begin{proposition}
Consider $M/2 \in \mathbb{N}$ i.i.d.\ uniformly distributed points on $\S^2$, and let $\nu_M$ denote the set of $M$ points consisting of these $M/2$ points together with their antipodal points.
For each point in $\nu_M$ let us consider a set of $k$ roots of unity on the fibre corresponding to the inverse of the point via the Hopf fibration.
Let $\omega_N$ be set of all $N=kM$ points obtained on $\S^3$ by this procedure that we denote by $\HopfUniformAntipodal{k}{M}$.
Then:
\begin{equation*}
    \Esp_{\omega_N \sim \HopfUniformAntipodal{k}{M}} \left[\Energy_{0}(\omega_N)\right]
    =
    \frac{-1}{4}N^2+\left(\frac{7}{2}\left(1-\log2\right)-\log7\right)N.    
\end{equation*}
\end{proposition}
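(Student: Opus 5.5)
The plan is to reduce everything to a closed formula for the expected energy between two Hopf fibres, depending only on the distance of their base points on $\S^2$. The statement will then follow from bookkeeping over pairs of base points, followed by a choice of $k$.

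\emph{Step 1 (two fibres).} View $\S^3\subset\C^2$. For $x\in h^{-1}(p)$ and $y\in h^{-1}(q)$, the Hermitian product satisfies $\abs{\langle x,y\rangle_{\C}}^2=\frac{1+p\cdot q}{2}$. Call $\rho=\abs{\langle x,y\rangle_{\C}}$. The $k$ points on the fibre over $q$ are $e^{i(\varphi+2\pi l/k)}y$, with a uniform random phase $\varphi$. Hence
\begin{equation*}
\|x-e^{i(\varphi+2\pi l/k)}y\|^2=2-2\rho\cos(\varphi'+2\pi l/k)
\end{equation*}
for some shifted phase $\varphi'$. I will write $2-2\rho\cos\phi=c\,\abs{1-\tau e^{i\phi}}^2$ with $c(1+\tau^2)=2$, $c\tau=\rho$ and $0\le\tau<1$. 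This gives $c=1+\sqrt{1-\rho^2}=1+\frac{\|p-q\|}{2}$. The root-of-unity product identity
\begin{equation*}
\prod_{l}\abs{1-\tau e^{i(\phi+2\pi l/k)}}^2=\abs{1-\tau^k e^{ik\phi}}^2
\end{equation*}
then applies. Averaging over the random phase, Jensen's formula (valid since $\tau<1$) kills the term $\log\abs{1-\tau^ke^{ik\phi}}$. The expected energy contributed by one ordered pair of distinct fibres is therefore exactly
\begin{equation*}
-\frac{k^2}{2}\log\Bigl(1+\frac{\|p-q\|}{2}\Bigr).
\end{equation*}
This formula also underlies the previous proposition; see \cref{calculo_uniforme_s2}. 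The intra-fibre energy of $k$ equally spaced points on a great circle is $-k\log k$.

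\emph{Step 2 (summation).} There are three kinds of contributions.
\begin{itemize}
\item Each of the $M$ fibres contributes $-k\log k$.
\item Each of the $M$ ordered antipodal pairs has $\|p-q\|=2$ and contributes $-\frac{k^2}{2}\log 2$.
\item The remaining $M(M-2)$ ordered pairs behave as independent uniform points on $\S^2$. For these, $s=\|p-q\|$ has density $s/2$ on $[0,2]$. Substituting $u=s/2$ gives $\Esp\log(1+s/2)=\int_0^1 2u\log(1+u)\,du=\frac12$, so each such pair contributes $-\frac{k^2}{4}$.
\end{itemize}
Adding up with $N=kM$ yields
\begin{equation*}
-\frac{N^2}{4}+N\Bigl(\frac{k}{2}(1-\log 2)-\log k\Bigr).
\end{equation*}

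\emph{Step 3 (choice of $k$).} The linear coefficient $\frac{k}{2}(1-\log2)-\log k$ is minimised over the reals at $k=2/(1-\log2)\approx 6.5$. Comparing the two nearest integers, $k=6$ and $k=7$, shows that $k=7$ is the integer minimiser. It gives exactly $\frac72(1-\log2)-\log7$, so the statement is to be read with this optimal $k=7$ points per fibre; I would make that explicit in the proof.

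The main obstacle is Step 1. It requires verifying the identity $\abs{\langle x,y\rangle_{\C}}^2=(1+p\cdot q)/2$ from the explicit parametrisation \cref{inversa_hopf}, and correctly handling the random phase so that the Jensen term vanishes exactly rather than only asymptotically. The remaining steps are elementary integrals and counting.
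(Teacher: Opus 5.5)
Your proof is correct and follows the paper's argument. You use the same three-way split (intra-fibre $-k\log k$, antipodal fibre $-\frac{k^2}{2}\log 2$, remaining fibres $-\frac{k^2}{4}$ per ordered pair) to get $-\frac{N^2}{4}+N\bigl(\frac{k}{2}(1-\log 2)-\log k\bigr)$, make the same optimal choice $k=7$, and rightly note that the statement implicitly assumes this $k$. The only difference is cosmetic: you average over the phase via the root-of-unity product identity and Jensen's formula, whereas the paper averages each pair separately using \cref{lema_ruso}, and these give the same result.
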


\begin{proof}
The computation of this asymptotic expansion of the energy is shown in \cref{calculo_uniforme_s2}.    
\end{proof}

Comparing linear terms, one sees that the best results among the different uniform distributions are obtained by taking the uniform distribution on $\S^2$, applying central symmetry, and then lifting to $\S^3$ via the Hopf fibration. 
However, none of the point configurations presented here achieves the optimal growth order in the $N\log(N)$ term (the second order term), whereas those presented in the next section do.
\cref{FiguraUniforme} compares the coefficients of the linear term for each uniform distribution.

\section{Determinantal point processes}

In this section, we present the basic theoretical foundations of determinantal point processes in order to enable the computation of the asymptotic expansion of the expected value of their logarithmic energy. 
A comprehensive development is beyond the scope of this work and can be found in \cite{hough_zeros_2009}.

We recall that a Polish space is a separable, completely metrizable topological space.
Let $\Lambda$ be a locally compact, Polish space endowed with a Radon measure $\mu$. 
A simple process of $N$ points in $\Lambda$ is a random variable in $\Lambda^N$ (or equivalently, $N$ random variables simultaneously chosen in $\Lambda$).
Some point processes admit associated intensity functions, defined as follows.

\begin{definition}
Let $\Lambda$ and $\mathcal{X}$ be a space and a simple point process of $N$ points as defined previously. 
Intensity functions are functions, if any exist,
\begin{equation*}
\rho_k : \Lambda^k \rightarrow [0, \infty),
\; 1 \leq k \leq N    
\end{equation*}
such that for any family of pairwise disjoint subsets $D_1, . . . , D_k$ of $\Lambda$, the following holds:
\begin{equation*}
\Esp_{x\sim\mathcal{X}}\left[\prod_{i=1}^k\sharp (x\cap D_i)\right]=\int_{\prod D_i}\rho_k(x_1,...,x_k)d\mu(x_1,...,x_k).    
\end{equation*}
In addition, we shall require that $\rho_{k}(x_1,\ldots,x_k)$ vanish if $x_{i} = x_{j}$ for some $i\neq j$.
\end{definition}

\begin{proposition}\label{prop_intensidad_0}
Let $\Lambda$ and $\mathcal{X}$ be a space and a simple point process of $N$ points with associated intensity functions $\rho_{k}, 1\leq k \leq N$, as in the previous definition. 
For any measurable function $f:\Lambda^k \rightarrow [0, \infty), k \geq 1$, the following holds:
\begin{multline*}
  \Esp_{x\sim\mathcal{X}}\left[\sum_{i_1,...,i_k\text{ distinct}}f(x_{i_1},...,x_{i_k})\right]\\=\int_{y_1,...,y_k\in\Lambda}f(y_1,...,y_k)\rho_k(y_1,...,y_k)d\mu(y_1,...,y_k).
\end{multline*}
\end{proposition}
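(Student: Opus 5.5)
The plan is to prove the statement first for simple functions built from products of indicators of disjoint sets, and then extend by monotone approximation. The main work is a combinatorial identity that turns products of counts into sums over distinct tuples.

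\emph{Step 1 (rectangles of disjoint sets).} Take $f=\mathbf 1_{D_1\times\cdots\times D_k}$ with $D_1,\dots,D_k$ pairwise disjoint. Then $\sum_{i_1,\dots,i_k\text{ distinct}} f(x_{i_1},\dots,x_{i_k})$ counts the ordered $k$-tuples of distinct points with the $j$-th point in $D_j$. Because the sets are disjoint, any choice of one point from each $D_j$ is automatically a tuple of distinct points. Hence the sum equals $\prod_j \sharp(x\cap D_j)$, and the defining property of $\rho_k$ gives the identity directly.

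\emph{Step 2 (general rectangles).} Take $f=\mathbf 1_{A_1\times\cdots\times A_k}$ with $A_1,\dots,A_k$ arbitrary measurable sets. Refine them into the atoms $C_1,\dots,C_m$ of the partition they generate, so that $\mathbf 1_{A_1\times\cdots\times A_k}=\sum \mathbf 1_{C_{j_1}\times\cdots\times C_{j_k}}$. The sum runs over choices $C_{j_l}\subset A_l$, and both sides are additive in $f$, so it suffices to treat products of atoms, possibly with repeated atoms.

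\emph{Step 3 (repeated atoms).} Suppose the atom $C$ appears $m$ times in the product. The distinct-index sum then gives the falling factorial $\sharp(x\cap C)(\sharp(x\cap C)-1)\cdots(\sharp(x\cap C)-m+1)$, times the analogous factors for the other atoms.

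\emph{Step 4 (the main obstacle).} We must show that the expectation of this falling factorial equals $\int_{C^m}\rho_k$, while the definition only gives products of counts over \emph{disjoint} sets. The approach I would try first is the following.
\begin{itemize}
\item Partition $C$ into pieces $C^{(n)}_1,\dots,C^{(n)}_{p_n}$ of mesh tending to $0$, using that $\Lambda$ is Polish and hence separable.
\item Write the falling factorial as the sum, over $m$-tuples of \emph{distinct} pieces, of the product of counts, plus a remainder coming from tuples that use the same piece twice.
\item Apply the definition to each distinct-piece term. This produces $\int\rho_k$ over the union of the off-diagonal blocks, which increases to $C^m$ minus the diagonals.
\item For the remainder, use simplicity: almost surely the $N$ points are distinct, so for $n$ large no two points share a piece and the remainder vanishes. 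It is bounded by $N^k$, so dominated convergence gives a vanishing expectation.
\item On the other side, monotone convergence applies to the integrals. The diagonals contribute nothing because $\rho_k$ vanishes there by assumption.
\end{itemize}

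\emph{Step 5 (extension to all measurable $f\ge0$).} Both sides of the identity are measures on $\Lambda^k$ as set functions of $f=\mathbf 1_E$: countable additivity follows from monotone convergence, and both sides are finite on the products of bounded pieces used above. Since they agree on the $\pi$-system of measurable rectangles, the $\pi$–$\lambda$ theorem makes them agree on all Borel sets of $\Lambda^k$. By linearity they agree on simple functions, and approximating a nonnegative measurable $f$ from below by simple functions, monotone convergence yields the statement.
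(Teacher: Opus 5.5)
Your proof is correct. It does not follow the paper, which gives no argument and only refers to Equation~1.13 of Etayo's thesis.

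You derive the general identity from the definition the paper actually states, which only controls products of counts over \emph{pairwise disjoint} sets. Rectangles of disjoint sets are immediate. The atom decomposition reduces general rectangles to products of falling factorials, and the refinement step rewrites these as products of counts over disjoint pieces. The $\pi$--$\lambda$ theorem plus monotone convergence then finishes. The citation buys brevity. Your route is self-contained and shows exactly where the two extra hypotheses of the paper's definition are needed: simplicity makes the remainder $R_n$ vanish, and the vanishing of $\rho_k$ on diagonals lets you replace the off-diagonal set by the full product of atoms.

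A few details to tidy up, none of which is a gap:
\begin{itemize}
\item In Step~4, refine all repeated atoms simultaneously, so that each main term is a product of exactly $k$ counts over pairwise disjoint pieces. That is what the definition of $\rho_k$ requires; as written, $\int_{C^m}\rho_k$ mixes $m$ and $k$ by treating one atom in isolation.
\item Finite partitions with mesh tending to $0$ require total boundedness. This holds for $\S^2$ and $\CP^1$, the spaces used later, but not for a general locally compact Polish $\Lambda$. Use countable Borel partitions of diameter $<1/n$ built from a dense sequence, and Tonelli to exchange expectation with the countable sums.
\item The off-diagonal sets $U_n$ only increase if the partitions are nested. You can take common refinements, or avoid the issue: $U_n\subseteq U$ gives $\int_{U_n}\rho_k\le\int_U\rho_k$, and Fatou gives the reverse inequality in the limit.
\item In Step~5 you do not need ``bounded pieces''. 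Both measures are finite outright, since $\Lambda^k$ is itself a rectangle of mass $N!/(N-k)!$. Rectangles generate the Borel $\sigma$-algebra of $\Lambda^k$ because $\Lambda$ is separable.
\end{itemize}
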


\begin{proof}
We refer to \cite{etayo_problema_2019}*{Equation 1.13}.
\end{proof}

Some simple point processes with associated intensity functions $\rho_k$, $1 \le k \le N$, have intensity functions of the form
\begin{equation}\label{formula_determinantal}
    \rho_k(x_1,...,x_k)=\det \left(K(x_i,x_j)_{1\leq i, j\leq k}\right)
\end{equation}
for $K: \Lambda\times\Lambda\longrightarrow\C$ a measurable function.
Such processes are called \gls{dpp}. 
The existence of determinantal point processes under suitable assumptions, and with prescribed structural properties, is guaranteed by the Macchi--Soshnikov theorem, which we now recall.

\begin{theorem}[Macchi--Soshnikov]\label{thm-macchi-soshnikov}
Let $\Lambda$ be as described above and let $H \subset L^2(\Lambda)$ be a subspace of dimension $N$.
Then there exists a point process $\mathcal{X}_H$ on $\Lambda$ consisting of $N$ points, whose $k$-point intensity functions are given by
\begin{equation*}
    \rho_k(x_1,...,x_k)
    =
    \det \left(K_{H}(x_i,x_j)_{1\leq i, j\leq k}\right)
\end{equation*}
where $K_H$ denotes the reproducing kernel of $H$ and we say that $\mathcal{X}_H$ is a determinantal point process with kernel $K_H$.
\end{theorem}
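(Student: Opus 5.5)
The plan is to construct the process explicitly as a random $N$-tuple whose joint density is a squared Vandermonde-type determinant. Its correlation functions are then read off by integrating out variables one at a time. Fix an orthonormal basis $\varphi_1,\dots,\varphi_N$ of $H$, choosing pointwise representatives, and set $K_H(x,y)=\sum_{i=1}^N\varphi_i(x)\overline{\varphi_i(y)}$. This is the reproducing kernel of $H$ and does not depend on the basis chosen. Two facts follow from orthonormality:
\begin{equation*}
K_H(y,x)=\overline{K_H(x,y)},\qquad \int_\Lambda K_H(x,y)K_H(y,z)\,d\mu(y)=K_H(x,z),\qquad \int_\Lambda K_H(y,y)\,d\mu(y)=N.
\end{equation*}
Writing $\Phi(x_1,\dots,x_N)=(\varphi_i(x_j))_{i,j}$, the matrix identity $\Phi^{*}\Phi=(K_H(x_i,x_j))_{i,j}$ gives
\begin{equation*}
\abs{\det\Phi}^2=\det\bigl(K_H(x_i,x_j)\bigr)_{1\le i,j\le N}\ge 0 .
\end{equation*}

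First I define the candidate law on $\Lambda^N$ by the density $p(x_1,\dots,x_N)=\frac{1}{N!}\det(K_H(x_i,x_j))_{i,j\le N}$ with respect to $\mu^{\otimes N}$. To see that it is a probability density, I expand both determinants in $\abs{\det\Phi}^2$ as sums over permutations and integrate term by term; this is the Andréief/Cauchy--Binet identity. Orthonormality kills every term except those with matching permutations, so $\int\abs{\det\Phi}^2=N!\det(\langle\varphi_i,\varphi_j\rangle)=N!$. The process $\mathcal{X}_H$ is then the unordered set $\{x_1,\dots,x_N\}$ with $(x_1,\dots,x_N)\sim p$. Since $p$ vanishes whenever $x_i=x_j$ (two equal columns of $\Phi$), the process is almost surely simple.

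The core step is a reduction lemma: for $1\le k\le N$,
\begin{equation*}
\int_\Lambda \det\bigl(K_H(x_i,x_j)\bigr)_{i,j\le k}\,d\mu(x_k)=(N-k+1)\det\bigl(K_H(x_i,x_j)\bigr)_{i,j\le k-1}.
\end{equation*}
I will prove it by expanding the $k\times k$ determinant over permutations $\sigma\in S_k$ and splitting them into two classes.
\begin{itemize}
\item If $\sigma(k)=k$, the factor $K_H(x_k,x_k)$ integrates to $N$, and these terms give $N\det_{k-1}$.
\item If $\sigma(k)\neq k$, then $x_k$ appears in exactly two factors, $K_H(x_a,x_k)K_H(x_k,x_{\sigma(k)})$ with $a=\sigma^{-1}(k)$. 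The reproducing identity collapses them to $K_H(x_a,x_{\sigma(k)})$. This corresponds to the permutation $\sigma'\in S_{k-1}$ obtained by deleting $k$ from the cycle of $\sigma$, which has opposite sign.
\item Each $\sigma'$ arises from exactly $k-1$ such $\sigma$ (insert $k$ after any of the $k-1$ elements). So these terms contribute $-(k-1)\det_{k-1}$, and the total is $(N-k+1)\det_{k-1}$.
\end{itemize}
I expect this sign-and-multiplicity bookkeeping to be the main point needing care.

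Finally, iterating the lemma from $N$ down to $k+1$ shows that the $k$-dimensional marginal of $p$ equals $\frac{(N-k)!}{N!}\det(K_H(x_i,x_j))_{i,j\le k}$. For disjoint $D_1,\dots,D_k$, the quantity $\prod_i\sharp(x\cap D_i)$ is the sum over ordered $k$-tuples of distinct indices of $\prod_i 1_{D_i}(x_{i_i})$. There are $N!/(N-k)!$ such tuples, and since $p$ is symmetric each has the same expectation. Therefore
\begin{equation*}
\Esp\Bigl[\prod_{i=1}^k\sharp(x\cap D_i)\Bigr]=\int_{\prod D_i}\det\bigl(K_H(x_i,x_j)\bigr)_{i,j\le k}\,d\mu,
\end{equation*}
which gives $\rho_k=\det(K_H(x_i,x_j))_{i,j\le k}$. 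This function vanishes on diagonals because two rows of the matrix coincide there, as the definition of intensity functions requires.
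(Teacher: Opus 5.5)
Your proof is correct and matches the argument the paper relies on: the paper gives no proof of its own and defers to Macchi, Soshnikov and the monograph of Hough, Krishnapur, Peres and Vir\'ag, where this same construction is carried out (joint density $\frac{1}{N!}\det(K_H(x_i,x_j))_{i,j\le N}$, nonnegativity via a Gram matrix, and the integrate-out identity with factor $N-k+1$). One harmless slip: with $\Phi_{ij}=\varphi_i(x_j)$ one gets $(\Phi^*\Phi)_{jl}=K_H(x_l,x_j)$, which is the transpose of $(K_H(x_i,x_j))_{i,j}$ and so has the same determinant.
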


\begin{proof}
Proofs of this theorem can be found in \cites{Macchi1975,Soshnikov2000}, and in a formulation closer to the present one in \cite{hough_zeros_2009}*{Theorem 4.5.5}.
\end{proof}

\begin{corollary}\label{cor:constantkernel}
Under the assumptions of \cref{thm-macchi-soshnikov}, the expected number of points satisfies
\begin{equation*}
    N = \mathbb{E}_{X \sim \mathcal{X}_H}[N]
    = \int_{\Lambda} K_H(p,p)\, d\mu(p).
\end{equation*}
In particular, if the diagonal of the kernel $K_H(p,p)$ is constant (independent of $p$), then necessarily
\begin{equation*}
    K_H(p,p)=\frac{N}{{\rm Vol}(\Lambda)},
\end{equation*}
and we say that $K_H$ is homogeneous.
\end{corollary}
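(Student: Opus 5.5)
The plan is to specialise \cref{thm-macchi-soshnikov} to the case $k=1$ and then integrate over $\Lambda$. By \cref{prop_intensidad_0} with $k=1$ and $f=\mathbf{1}_\Lambda$, one has
\begin{equation*}
\Esp_{X\sim\mathcal{X}_H}\Bigl[\sum_{i}1\Bigr]=\int_\Lambda \rho_1(p)\,d\mu(p).
\end{equation*}
By \cref{thm-macchi-soshnikov}, the one-point intensity is $\rho_1(p)=\det(K_H(p,p))=K_H(p,p)$. The left-hand side is the expected cardinality, which equals $N$ because the process has exactly $N$ points almost surely. This gives the first identity.

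To make this self-contained, one can also check directly that $\int_\Lambda K_H(p,p)\,d\mu(p)=\dim H=N$. Take an orthonormal basis $\phi_1,\dots,\phi_N$ of $H$, so that $K_H(p,q)=\sum_{j}\phi_j(p)\overline{\phi_j(q)}$. Then $K_H(p,p)=\sum_j|\phi_j(p)|^2$, and integrating gives $\sum_j\|\phi_j\|_{L^2}^2=N$. This confirms the count and shows that $K_H(p,p)\ge 0$ is integrable. It also settles a minor technical point: $K_H(p,p)$ is defined pointwise for every $p$, since the reproducing kernel of a finite-dimensional space is built from fixed representatives of the basis functions.

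For the homogeneous case, suppose $K_H(p,p)=c$ for all $p$. Then $N=\int_\Lambda c\,d\mu=c\,\mathrm{Vol}(\Lambda)$, where $\mathrm{Vol}(\Lambda)=\mu(\Lambda)$. Since $N\ge1$, the volume must be finite and positive, so $c=N/\mathrm{Vol}(\Lambda)$.

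No step here is a real obstacle. The only care needed is in the $k=1$ case of \cref{prop_intensidad_0}: the sum over distinct indices is just the sum over points, and the vanishing-on-diagonal convention is vacuous for $k=1$.
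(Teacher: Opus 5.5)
Your proposal is correct and is the paper's argument: it takes the test function $\equiv 1$ in the $k=1$ intensity identity and uses $\rho_1(p)=K_H(p,p)$. Your orthonormal-basis check that $\int_\Lambda K_H(p,p)\,d\mu(p)=\dim H=N$, and your remark that $N\ge 1$ forces $0<\mathrm{Vol}(\Lambda)<\infty$ in the homogeneous case, are sound additions.
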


\begin{proof}
We take $\varphi \equiv 1$ and substitute it into the previous identity.
\end{proof}

\begin{corollary}\label{prop_intensidad}
Under the assumptions of \cref{thm-macchi-soshnikov}, for any measurable function
$f:\Lambda\times\Lambda\to[0,1)$, one has
\begin{equation*}
    \mathbb{E}_{X\sim \mathcal{X}_H}
    \left[
    \sum_{i\neq j} f(x_i,x_j)
    \right]
    =
    \int_{\Lambda\times\Lambda}
    \bigl(
    K_H(p,p)K_H(q,q)-|K_H(p,q)|^2
    \bigr)
    f(p,q)\,d\mu(p)\,d\mu(q).
\end{equation*}
\end{corollary}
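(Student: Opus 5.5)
The plan is to derive the statement directly from \cref{prop_intensidad_0} with $k=2$ and the determinantal form \cref{formula_determinantal} of the intensities. First expand the sum over ordered pairs of distinct indices:
\begin{equation*}
\sum_{i\neq j} f(x_i,x_j)
=
\sum_{i_1,i_2\ \text{distinct}} f(x_{i_1},x_{i_2}).
\end{equation*}
Taking expectations and invoking \cref{prop_intensidad_0} gives
\begin{equation*}
\mathbb{E}_{X\sim\mathcal{X}_H}\Bigl[\sum_{i\neq j} f(x_i,x_j)\Bigr]
=\int_{\Lambda\times\Lambda} f(p,q)\,\rho_2(p,q)\,d\mu(p)\,d\mu(q).
\end{equation*}
The hypothesis $f\ge 0$, measurable, is exactly what \cref{prop_intensidad_0} requires; the bound $f<1$ plays no role.

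Next, by \cref{thm-macchi-soshnikov}, $\rho_2(p,q)$ is the $2\times2$ determinant
\begin{equation*}
\rho_2(p,q)=K_H(p,p)K_H(q,q)-K_H(p,q)K_H(q,p).
\end{equation*}
The only remaining point is that $K_H(q,p)=\overline{K_H(p,q)}$. This holds because $K_H$ is the reproducing kernel of a finite-dimensional subspace of $L^2(\Lambda)$. Choosing an orthonormal basis $\phi_1,\dots,\phi_N$ of $H$, one has
\begin{equation*}
K_H(p,q)=\sum_{\ell}\phi_\ell(p)\overline{\phi_\ell(q)},
\end{equation*}
which is Hermitian. Hence the off-diagonal product equals $|K_H(p,q)|^2$, and substituting this into the integral gives the stated formula.

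The argument has no real obstacle; the only care needed is the Hermitian symmetry of the kernel. That symmetry should be taken from the basis representation of the reproducing kernel rather than assumed. For signed integrands such as $-\log\|p-q\|$, which the later sections need, one would extend the identity by splitting $f=f^+-f^-$, provided both pieces are integrable against $\rho_2$.
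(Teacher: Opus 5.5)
Your proposal is correct and takes the same route as the paper. The paper's proof is the one-line remark that the identity follows from \cref{prop_intensidad_0} with $k=2$ and the determinantal form of $\rho_2$. You have simply written out the $2\times 2$ determinant and justified $K_H(q,p)=\overline{K_H(p,q)}$ via the orthonormal-basis expansion of the reproducing kernel.

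Your extension to signed integrands is not needed later. The paper only applies this identity to $\log\bigl(1+\sqrt{1-\langle \mathbf p,\mathbf q\rangle^2}\bigr)$, which takes values in $[0,\log 2]\subset[0,1)$.
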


\begin{proof}
    It is a direct consequence of \cref{prop_intensidad_0} and the definition of \gls{dpp}.
\end{proof}

\subsection{Determinantal point processes on $ \mathbb{CP}^1$ and the Hopf fibration}

\begin{proposition}\label{prop:dppHopf}
Let $\nu_r = \{ \mathbf{x}_1,\ldots,\mathbf{x}_r \} \subset \mathbb{CP}^1$ be a set of random points following the distribution of a \gls{dpp} that we denote by $\mathcal{X}^r$.
For each point in $\nu_r$ let us consider a set of $k$ roots of unity on the fibre corresponding to the inverse of the point via the Hopf fibration.
Let $\omega_N$ be set of all $N=kr$ points obtained on $\S^3$ by this procedure that we denote by $\Hopfdpp{k}{r}$.
Then:
\begin{equation*}
\begin{split}
    \Esp_{\omega_N \sim \Hopfdpp{k}{r}} & \left[\Energy_{0}(\omega_N)\right]
    \\
    =   
    -
    rk & \log k
    \\
    -
    \frac{k^2}{2}
    &
    \int_{\mathbb{CP}^1\times\mathbb{CP}^1} 
    \log\left(1+\sqrt{1-<\mathbf{p, q}>^2}\right)\left(K(\mathbf{p,p})^2-|K(\mathbf{p,q})|^2\right)d\mathbf{p}d\mathbf{q},
\end{split}
\end{equation*}
where $K$ is the kernel associated to $\mathcal{X}^r$.
\end{proposition}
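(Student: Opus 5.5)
Split the energy sum $E_0(\omega_N)$ into pairs lying on the same fibre and pairs lying on different fibres. Write $\omega_N=\bigcup_{i=1}^r F_i$, where $F_i$ consists of the $k$ equally spaced points (with a uniformly random phase $\theta_i$) on the great circle $h^{-1}(\mathbf x_i)$, and the phases are independent of each other and of the base process.

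\emph{Same-fibre pairs.} Each fibre is a great circle. The $k$ points on it are the $k$-th roots of unity on a unit circle, rotated. For roots of unity, $\prod_{j\neq l}|\zeta_j-\zeta_l|=k^k$, because $\prod_{j=1}^{k-1}|1-\zeta^j|=k$. Hence every fibre contributes exactly $-k\log k$, independently of the phase. The $r$ fibres together give the term $-rk\log k$.

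\emph{Different-fibre pairs.} Fix two base points $\mathbf p\neq\mathbf q$ and take two points on their fibres, $x=e^{i\alpha}u$ and $y=e^{i\beta}v$, with $u,v\in\S^3\subset\C^2$ fixed lifts. Then
\[
\|x-y\|^2=2-2\Re\bigl(e^{i(\alpha-\beta)}\langle u,v\rangle_{\C^2}\bigr),
\]
where $|\langle u,v\rangle_{\C^2}|=:c$ depends only on $\pi(u),\pi(v)$. Because the phases are independent and uniform, and the $k$ points on each fibre are equally spaced, the angle $\alpha-\beta$ is uniform on $[0,2\pi)$ for each of the $k^2$ pairs. So I need
\[
\frac1{2\pi}\int_0^{2\pi}-\tfrac12\log\bigl(2-2c\cos t\bigr)\,dt .
\]
By Jensen's formula, $\frac{1}{2\pi}\int\log|a-be^{it}|^2\,dt=2\log\max(|a|,|b|)$. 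Factor $2-2c\cos t=|\lambda-\mu e^{it}|^2$ with $\lambda\mu=c$ and $\lambda^2+\mu^2=2$. This gives $\max(\lambda^2,\mu^2)=1+\sqrt{1-c^2}$, so the average of $-\log\|x-y\|$ is $-\tfrac12\log\bigl(1+\sqrt{1-c^2}\bigr)$. Multiplying by the $k^2$ pairs gives $-\tfrac{k^2}{2}\log\bigl(1+\sqrt{1-c^2}\bigr)$ per ordered pair of fibres.

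\emph{Geometric identification.} I will write $\langle\mathbf p,\mathbf q\rangle$ for $c=|\langle u,v\rangle_{\C^2}|$, i.e.\ the cosine of the Fubini--Study distance between $\mathbf p$ and $\mathbf q$ in $\CP^1$. This is the notation the statement presumably uses: the expression depends only on the base points, as it must. The step to check carefully is that the formula matches the $\S^2$ picture. Using \eqref{inversa_hopf}, for base points on $\S^2$ one has $c^2=(1+\mathbf P\cdot\mathbf Q)/2$, so with the $\S^2$ inner product the formula would read differently. I will state the result with the Hermitian $\CP^1$ inner product and remark on the conversion.

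\emph{Summing over the base process.} Conditioning on $\nu_r$ and summing over ordered pairs $i\neq j$ gives $-\frac{k^2}{2}\sum_{i\neq j}f(\mathbf x_i,\mathbf x_j)$ with $f(\mathbf p,\mathbf q)=\log\bigl(1+\sqrt{1-\langle\mathbf p,\mathbf q\rangle^2}\bigr)\ge0$. Taking expectations and applying \cref{prop_intensidad} turns this into the integral against $K(\mathbf p,\mathbf p)K(\mathbf q,\mathbf q)-|K(\mathbf p,\mathbf q)|^2$. The boundedness hypothesis there is harmless, since $0\le f\le\log2$ and only nonnegativity is really needed via \cref{prop_intensidad_0}. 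The product $K(\mathbf p,\mathbf p)K(\mathbf q,\mathbf q)$ is written $K(\mathbf p,\mathbf p)^2$ in the statement. This is justified when the kernel is homogeneous (\cref{cor:constantkernel}), which is the case for the rotation-invariant ensembles considered; in general one keeps the product. Fubini is legitimate because everything is nonnegative.

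The main obstacle is the cross-fibre average: making sure the relative phase is genuinely uniform, and evaluating the log-integral correctly via Jensen's formula to get exactly $\tfrac12\log\bigl(1+\sqrt{1-c^2}\bigr)$.
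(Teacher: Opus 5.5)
Your proof is correct and follows the paper's route. You use the same split into same-fibre pairs, which give $-rk\log k$ from the roots-of-unity energy, and cross-fibre pairs, which phase-averaging reduces to $-\tfrac{k^2}{2}\log\bigl(1+\sqrt{1-c^2}\bigr)$ before integrating against the two-point intensity via \cref{prop_intensidad}. The differences are cosmetic: you evaluate the phase average with Jensen's formula where the paper cites the tabulated integral of \cref{lema_ruso}, and you rightly make explicit two things the paper leaves implicit — that $\langle\mathbf p,\mathbf q\rangle$ means the modulus of the Hermitian product of unit lifts, and that replacing $K(\mathbf p,\mathbf p)K(\mathbf q,\mathbf q)$ by $K(\mathbf p,\mathbf p)^2$ uses homogeneity of the kernel.
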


\begin{proof}

Let $\mathbf{x}_1, ..., \mathbf{x}_r \in \mathbb{CP}^1$ be obtained through a \gls{dpp} with kernel $K$. 
We choose, for each $\mathbf{x}_i$, an affine representative of unit norm, which we denote by the same symbol.
Let $\underline{i}=\sqrt{-1}$ denote the imaginary unit. Consider 
\begin{equation*}
\mathbf{y}_{ij}=\mathbf{x}_ie^{\underline{i}(\theta_i+\frac{2\pi j}{k})}\in \S^3 \subseteq \mathbb{C}^2,
\; 1\leq i \leq r, \; 1\leq j \leq k,
\end{equation*}
with $\theta_i$ a random phase picked uniformly on $[0, 2\pi]$, as the points in the fibres given by the inverse of the Hopf fibration in its complex form, see \cref{inversaHopfcompleja}.

In the computation of the expected logarithmic energy, two quantities are involved:

\begin{enumerate}
    \item The energy contributed by pairs of points in the same fibre, summed over all the fibres (denote this quantity by $\boldsymbol{J_1}$).
    \item The energy contributed by pairs of points in different fibres, (denote this quantity by $\boldsymbol{J_2}$).
\end{enumerate}


\emph{Computation of $\boldsymbol{J_1}$.}
All the fibres are great circles with $k$ points. Therefore, the quantity $\boldsymbol{J_1}$ equals the energy of one fibre multiplied by the number of fibres.
Furthermore, it is important to note that the distances do not depend on $\theta_i$ since all the points are on the same fibre.
\begin{multline*}
    \boldsymbol{J_1} = \sum^r_{i=1} \frac{1}{2\pi} \int^{2\pi}_0 \Esp_{\theta_i}\left[\sum_{\substack{j_1, j_2=0\\ j_1 \neq j_2}}^{k-1} -\log\left|\left|e^{\underline{i}(\theta_i+\frac{2\pi j_1}{k})}\mathbf{x}_i - e^{\underline{i}(\theta_i+\frac{2\pi j_2}{k})}\mathbf{x}_i\right|\right|  \right] d\theta_i
    \\
    = -r\sum_{\substack{j_1, j_2=0\\ j_1 \neq j_2}}^{k-1}\log \left|e^{\underline{i}\frac{2\pi j_1}{k}} - e^{\underline{i}\frac{2\pi j_2}{k}}\right| = -rk\log k,
\end{multline*}
where we have used the fact that $||\mathbf{x}_i||=1$ and \cref{prop_energia_fibra}.


\emph{Computation of $\boldsymbol{J_2}$.}

\begin{multline*}
    \boldsymbol{J_2} 
    \\
    {\scriptstyle = \frac{1}{4\pi^2}
    \Esp_{\nu_r \sim \mathcal{X}^{r}}
    \left[\sum_{j_1, j_2=0}^{k-1}\sum_{\substack{i_1,i_2=1\\ i_1\neq i_2}}^r\int_0^{2\pi}\int_0^{2\pi} -\log\left|\left|e^{\underline{i}(\theta_{i_1}+\frac{2\pi j_1}{k})}\mathbf{x}_{i_1} - e^{\underline{i}(\theta_{i_2}+\frac{2\pi j_2}{k})}\mathbf{x}_{i_2}\right|\right|d\theta_{i_1}d\theta_{i_2}\right] }
    \\
    =\Esp_{\nu_r \sim \mathcal{X}^{r}}
    \left[\sum_{j_1, j_2=0}^{k-1}\sum_{\substack{i_1,i_2=1\\ i_1\neq i_2}}^r \frac{1}{2\pi}\int_0^{\pi}-\log(2-2<\mathbf{x}_{i_1}, \mathbf{x}_{i_2}>\cos\theta)d\theta\right]
    \\
    =\frac{-k^2}{2}
    \Esp_{\nu_r \sim \mathcal{X}^{r}}
    \left[\sum_{\substack{i_1,i_2=1\\ i_1\neq i_2}}^r \log\left(1+\sqrt{1-<\mathbf{x}_{i_1}, \mathbf{x}_{i_2}>^2}\right)\right],
\end{multline*}
where the last integral was computed using \cref{lema_ruso}.
We conclude with \cref{prop_intensidad}:
\begin{multline*}
    \boldsymbol{J_2} 
    =
    \frac{-k^2}{2}
    \Esp_{\nu_r \sim \mathcal{X}^{r}}
    \left[\sum_{\substack{i_1,i_2=1\\ i_1\neq i_2}}^r \log\left(1+\sqrt{1-<\mathbf{x}_{i_1}, \mathbf{x}_{i_2}>^2}\right)\right]
    \\
    =
    -
    \frac{k^2}{2}
    \int_{\mathbb{CP}^1\times\mathbb{CP}^1} \log\left(1+\sqrt{1-<\mathbf{p, q}>^2}\right)\left(K(\mathbf{p,p})^2-|K(\mathbf{p,q})|^2\right)d\mathbf{p}d\mathbf{q}.
\end{multline*}

\end{proof}

\subsection{The Spherical ensemble}

A natural interpretation of the Spherical ensemble arises from the generalized eigenvalue problem associated with a matrix pencil $(A,B)$, viewed in the complex projective space. More precisely, one considers the points $(\alpha:\beta)\in\CP^1$ such that
\begin{equation*}
    \det(\alpha B-\beta A)=0,
\end{equation*}
and identifies $\CP^1$ with the Riemann sphere.
It was shown by Krishnapur \cite{krishnapur_random_2006} that this construction defines a determinantal point process on $\CP^1$. 
The associated correlation kernel can be described explicitly as follows.
Writing $x=[\mathbf p]$, $y=[\mathbf q]$ with $\mathbf p,\mathbf q\in\C^2\setminus\{0\}$, and with respect to the Fubini--Study volume normalized so that ${\rm Vol}(\CP^1)=\pi$, its correlation kernel is given by
\begin{equation*}
    \bigl|K^{\CP^1}_{r}(x, y)\bigr|
    =
    \frac{r}{\pi}
    \left|
    \left\langle
    \frac{\mathbf p}{\|\mathbf p\|},
    \frac{\mathbf q}{\|\mathbf q\|}
    \right\rangle
    \right|^{\,r-1},
    \qquad
    \mathbf{p},\mathbf{q}\in\C^2\setminus\{0\}.
\end{equation*}
In particular,
\begin{equation*}
    K^{\CP^1}_{r}(x,x)=\frac{r}{\pi}.
\end{equation*}
Via the diffeomorphism mapping the Riemann sphere onto the unit sphere $\S^2$ (see \cref{prop:diffCP1S2}), and using that the surface area measure on $\S^2$ satisfies $\sigma(\S^2)=4\pi$, this construction induces a determinantal point process on $\S^2$ whose correlation kernel is given by
\begin{equation*}
    \bigl|K^{\S^2}_{r}(x,y)\bigr|
    =
    \frac{r}{4\pi}
    \left(\frac{1+\langle x,y\rangle}{2}\right)^{r-1},
    \qquad 
    x,y\in\S^2,
\end{equation*}
where $\langle x,y\rangle$ denotes the Euclidean inner product in $\R^3$. 
In particular,
\begin{equation*}
    K^{\S^2}_{r}(x,x)=\frac{r}{4\pi}.
\end{equation*}
The properties of the Spherical ensemble on $\S^2$ are studied in depth by Alishahi and Zamani \cite{Alishahi2015}, who, in particular compute the expected logarithmic energy of its points.

The kernels $K^{\CP^1}_{r}$ and $K^{\S^2}_{r}$ are related by push-forward under the diffeomorphism identifying $\CP^1$ with $\S^2$, together with the corresponding Jacobian factor between the Fubini--Study volume and the spherical surface measure.


\begin{proposition}\label{prop:01}
Let $\nu_r$ be a set of $r$ random points sampled from the Spherical ensemble $\Sphericalensemble{r}$. 
For each point in $\nu_r$ let us consider a set of $k$ roots of unity on the fibre corresponding to the inverse of the point via the Hopf fibration.
Let $\omega_N$ be the set of all $N=kr$ points obtained on $\S^3$ by this procedure that we denote by $\HopfSphericalensemble{k}{r}$.
Then, the asymptotic expansion of the logarithmic energy of this family of points as $N$ goes to infinity is
\begin{equation*}
    \Esp_{\omega_N \sim \HopfSphericalensemble{k}{r}}
    \left[
    \Energy_{0} (\omega_N)
    \right]
    =
    -\frac{N^2}{4}-N\log k+\frac{\sqrt{\pi}}{4}\sqrt{N}k^{3/2}-\frac{k^2}{4}+k^2O\left(\sqrt{\frac{k}{N}}\right).
\end{equation*}
\end{proposition}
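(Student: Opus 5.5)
The plan is to specialise \cref{prop:dppHopf} to the Spherical ensemble kernel $K^{\CP^1}_r$. This reduces the energy to a single one-dimensional integral, which we then expand for large $r$. The term $J_1=-rk\log k=-N\log k$ is already accounted for. Throughout, the quantity $\langle\mathbf p,\mathbf q\rangle^2$ appearing in \cref{prop:dppHopf} is read as $t:=|\langle\mathbf p,\mathbf q\rangle|^2$, with unit representatives. This is the quantity that actually arises in the fibre-averaged distance computation there.

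\emph{Step 1: reduce the double integral to one variable.} Since $K^{\CP^1}_r(\mathbf p,\mathbf p)=r/\pi$ and $|K^{\CP^1}_r(\mathbf p,\mathbf q)|^2=(r/\pi)^2 t^{\,r-1}$, the integrand of $J_2$ depends only on $t$. Via \cref{prop:diffCP1S2} we have $t=(1+\langle x,y\rangle)/2$ for $x,y\in\S^2$. For a fixed point, the inner product $\langle x,y\rangle$ with a uniformly distributed point on $\S^2$ is uniform on $[-1,1]$ (Archimedes). Hence, under the normalised Fubini--Study measure on $\CP^1\times\CP^1$ of total mass $\pi^2$, the variable $t$ is uniform on $[0,1]$. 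Therefore
\begin{equation*}
J_2=-\frac{k^2r^2}{2}\left(\int_0^1\log\bigl(1+\sqrt{1-t}\bigr)\,dt-\int_0^1 t^{\,r-1}\log\bigl(1+\sqrt{1-t}\bigr)\,dt\right).
\end{equation*}

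\emph{Step 2: the leading term.} Substituting $s=\sqrt{1-t}$, the first integral becomes $\int_0^1 2s\log(1+s)\,ds$. Integrating by parts gives
\begin{equation*}
\int_0^1 2s\log(1+s)\,ds=\log 2-\int_0^1\frac{s^2}{1+s}\,ds=\log2-\Bigl(\log 2-\tfrac12\Bigr)=\tfrac12 .
\end{equation*}
This yields exactly $-k^2r^2/4=-N^2/4$.

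\emph{Step 3: the correction term (main obstacle).} The second integral is concentrated near $t=1$, that is, at small $s=\sqrt{1-t}$. I would Taylor expand
\begin{equation*}
\log(1+s)=s-\tfrac{s^2}{2}+R(s),\qquad |R(s)|\le \tfrac{s^3}{3},
\end{equation*}
and integrate each term exactly against $t^{r-1}$ using Beta functions:
\begin{align*}
\int_0^1 t^{r-1}(1-t)^{1/2}\,dt&=B\bigl(r,\tfrac32\bigr)=\frac{\Gamma(r)\,\Gamma(3/2)}{\Gamma(r+3/2)}=\frac{\sqrt\pi}{2}\,r^{-3/2}\bigl(1+O(r^{-1})\bigr),\\
\int_0^1 t^{r-1}(1-t)\,dt&=\frac{1}{r(r+1)},\\
\int_0^1 t^{r-1}|R|\,dt&\le \tfrac13 B\bigl(r,\tfrac52\bigr)=O\bigl(r^{-5/2}\bigr).
\end{align*}
The Gamma-ratio asymptotic $\Gamma(r)/\Gamma(r+3/2)=r^{-3/2}(1+O(1/r))$ is standard (Stirling). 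Multiplying by $k^2r^2/2$ gives
\begin{equation*}
\frac{\sqrt\pi}{4}\,k^2r^{1/2}-\frac{k^2}{4}+O\bigl(k^2r^{-1/2}\bigr).
\end{equation*}
Here the $1/(r+1)$ versus $1/r$ discrepancy is absorbed into the error term.

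\emph{Step 4: conclude.} Substituting $r=N/k$ gives $k^2r^{1/2}=\sqrt N\,k^{3/2}$ and $k^2r^{-1/2}=k^2\sqrt{k/N}$. Adding $J_1$ and $J_2$ then yields the stated expansion. The only delicate points are keeping the error uniform in both $k$ and $r$, which holds because all bounds depend on $r$ alone, and justifying the reduction of the measure to $t\sim U[0,1]$.
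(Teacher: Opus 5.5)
Your proof is correct, and its skeleton matches the paper's. You split into $\boldsymbol{J_1}+\boldsymbol{J_2}$ via \cref{prop:dppHopf}, reduce $\boldsymbol{J_2}$ to a one-variable integral, and expand in $r$. Your reduction through the law of $t=|\langle\mathbf p,\mathbf q\rangle|^2$ (uniform on $[0,1]$) is the paper's affine-chart and polar-coordinate computation in \cref{prop:dpp:iso} in another guise: the paper's radial variable $\rho$ is related to yours by $t=1/(1+\rho^2)$. Your version is cleaner, and the leading constant becomes the one-line integral $\int_0^1 2s\log(1+s)\,ds=\tfrac12$.

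The real difference lies in the correction integral $\int_0^1 t^{r-1}\log(1+\sqrt{1-t})\,dt$. The paper evaluates it exactly: integrating by parts and rationalising $1/(1+\sqrt{1-t})=(1-\sqrt{1-t})/t$ gives $\frac{1}{2r}\bigl(B(r,\tfrac12)-\tfrac1r\bigr)$. This yields the identity
\begin{equation*}
\Esp\left[\Energy_0(\omega_N)\right]=-\frac{N^2}{4}-N\log k+\frac{\sqrt{\pi}\,k^2\,\Gamma(r+1)}{4\,\Gamma(r+\frac12)}-\frac{k^2}{4},
\end{equation*}
valid for every $r$ and $k$, with Stirling entering only at the last step. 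It also gives the whole expansion in $r$ directly from the asymptotic series of the Gamma ratio.

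Your second-order Taylor expansion, with the Beta integrals $B(r,\tfrac32)$, $B(r,2)$ and $B(r,\tfrac52)$, gives only the asymptotic statement. In it the exact $-k^2/4$ shows up as $-\frac{k^2}{4}\frac{r}{r+1}$ up to errors. On the other hand, it needs no algebraic trick and uses only the behaviour of the integrand near $t=1$. It can therefore be pushed to any order, or applied to other functions of $|\langle\mathbf p,\mathbf q\rangle|$, simply by expanding further.

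You were also right to work with $|K^{\CP^1}_r|^2=(r/\pi)^2t^{r-1}$. Combined with your correct identity $t=(1+\langle x,y\rangle)/2$, it shows that the exponent in the paper's displayed $\S^2$ kernel should be $(r-1)/2$, not $r-1$. Taking that display literally would give $t^{2(r-1)}$ in place of $t^{r-1}$ and different constants.
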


\begin{proof}
See \cref{proof:prop:01}.
\end{proof}

An appropriate choice of $k$ in terms of the number of points $N$ leads to the asymptotic expansion presented in \cref{thm:main}:
\begin{equation*}
-
\frac{N^2}{4}
-
\frac{1}{3}N\log N
+
\frac{N}{3}\left(2+\log\frac{9\pi}{64}\right)
-
\frac{4}{3^{4/3}\pi^{2/3}}N^{2/3}+\mathcal{O}(N^{1/3}). 
\end{equation*}
However, this expansion represents an impossible construction, as it holds for values of $k$ that are not necessarily positive integers. In any case, as explained in \cref{calculodpp}, the constant $\frac{1}{3}\left(2+\log\frac{9\pi}{64}\right)$ can be approximated with arbitrary precision (although this imposes restrictions on the number of points).

\subsection{The Harmonic ensemble}

Let $Y_{\ell m}:\S^2\to\C$ be the standard spherical harmonics of degree $\ell\ge0$ and order $m\in\{-\ell,\dots,\ell\}$, orthonormal with respect to the normalized surface measure on $\S^2$, normalized by $\sigma(\S^2)=4\pi$.
For a given integer $L\ge0$, let $\mathcal H_{\le L}\subset L^2(\S^2)$ denote the $(L+1)^2$-dimensional subspace spanned by all spherical harmonics of degree $\ell\le L$.

The \emph{Harmonic ensemble} on $\S^2$ is defined as the determinantal point process whose correlation kernel $K_L(x,y)$ is the integral kernel of the orthogonal projection onto $\mathcal H_{\le L}$. 
Equivalently, if $\{Y_{\ell m}:0\le \ell\le L,\,-\ell\le m\le \ell\}$ is an orthonormal basis of $\mathcal H_{\le L}$, then
\begin{equation*}
    K^{\S^2}_L(x,y)
    =
    \sum_{\ell=0}^{L}\sum_{m=-\ell}^{\ell}
    Y_{\ell m}(x)\,\overline{Y_{\ell m}(y)},
    \qquad 
    x, y \in\S^2.
\end{equation*}
With respect to the surface area measure $\sigma$ on $\S^2$ one has
\begin{equation*}
    K^{\S^2}_L(x,x)
    =
    \frac{\dim(\mathcal H_{\le L})}{4\pi}
    =
    \frac{(L+1)^2}{4\pi}.
\end{equation*}
By the addition theorem for spherical harmonics, the kernel admits a rotationally invariant expression depending only on the Euclidean inner product $\langle x,y\rangle$:
\begin{equation*}
    K^{\S^2}_L(x,y)
    =
    \frac{1}{4\pi}\sum_{\ell=0}^{L}(2\ell+1)\,
    P_\ell(\langle x,y \rangle)
    =
    \frac{L+1}{4\pi}\,
    P^{(1,0)}_{L}\!\bigl(\langle x,y \rangle\bigr),
\end{equation*}
where $P_\ell$ denotes the Legendre polynomial of degree $\ell$ and
$P^{(1,0)}_{L}$ the Jacobi polynomial with parameters $(1,0)$.

As in the case of the Spherical ensemble, it is natural to consider the Harmonic ensemble intrinsically on the Riemann sphere $\CP^1$.
Let $\Phi:\S^2\to\CP^1$ be the diffeomorphism defined by stereographic projection followed by the affine chart, as in Proposition~\ref{prop:diffCP1S2}.
Pushing forward the process through $\Phi$, and using that the Jacobian satisfies $d\mathrm{vol}_{FS}=\frac14\,\Phi_\#\sigma$, one obtains a determinantal point process on $\CP^1$.

Writing $x=[\mathbf p]$, $y=[\mathbf q]$ with
$\mathbf p,\mathbf q\in\C^2\setminus\{0\}$, the correlation kernel of the Harmonic ensemble on $\CP^1$, with respect to the Fubini--Study volume normalized by ${\rm Vol}(\CP^1)=\pi$, is given explicitly by
\begin{equation*}
    K^{\CP^1}_L(x,y)
    =
    \frac{L+1}{\pi}\,
    P^{(1,0)}_{L}\!\left(
    2\left|\left\langle
        \frac{\mathbf p}{\|\mathbf p\|},
        \frac{\mathbf q}{\|\mathbf q\|}
        \right\rangle
        \right|^2
    -1
    \right).
\end{equation*}
In particular,
\begin{equation*}
    K^{\CP^1}_L(x,x)=\frac{(L+1)^2}{\pi}.
\end{equation*}
This expression shows that, in contrast with the Spherical ensemble, the Harmonic ensemble on $\CP^1$ is a purely radial determinantal process: the kernel depends only on the invariant quantity $\left|\left\langle
        \frac{\mathbf p}{\|\mathbf p\|},
        \frac{\mathbf q}{\|\mathbf q\|}
        \right\rangle
        \right|^2$.

The Harmonic ensemble on projective spaces, and in particular on $\CP^1$, is studied by Anderson et al.\ \cite{Anderson_Dostert_Grabner_Matzke_Stepaniuk_2023}, who obtain the same correlation kernel up to a different normalization.

\begin{proposition}\label{prop:02}
Let $\nu_r$ be a set of $r$ random points sampled from the Harmonic ensemble, $\Harmonicensemble{r}$. 
For each point in $\nu_r$ let us consider a set of $k$ roots of unity on the fibre corresponding to the inverse of the point via the Hopf fibration.
Let $\omega_N$ be set of all $N=kr$ points obtained on $\S^3$ by this procedure that we denote by $\HopfHarmonicensemble{k}{r}$.
Then, the asymptotic expansion of the logarithmic energy of this family of points as $N$ goes to infinity is
\begin{equation*}
    \Esp_{\omega_N \sim \HopfHarmonicensemble{k}{r}}
    \left[
    \Energy_{0}(\omega_N)
    \right]
    =
    -
    \frac{N^2}{4}
    -
    N\log (k)
    +
    \frac{k^2 \sqrt{r} \log (r)}{4\pi }
    +
     k^2 
     \mathcal{O}\left( \sqrt{r} \right)
     .
\end{equation*}
\end{proposition}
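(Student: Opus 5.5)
We apply \cref{prop:dppHopf} to the Harmonic ensemble on $\CP^1$, with $r=(L+1)^2$ points and kernel $K^{\CP^1}_L$. The intra-fibre term is $-rk\log k=-N\log k$ exactly. The remaining task is the asymptotics of
\begin{equation*}
J_2=-\frac{k^2}{2}\int_{\CP^1\times\CP^1}\log\left(1+\sqrt{1-t^2}\right)\left(K(\mathbf p,\mathbf p)^2-|K(\mathbf p,\mathbf q)|^2\right)d\mathbf p\,d\mathbf q ,
\end{equation*}
where $t=|\langle \mathbf p,\mathbf q\rangle|$. Both $K(\mathbf p,\mathbf p)=r/\pi$ and $|K(\mathbf p,\mathbf q)|$ depend only on $t$. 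Fixing $\mathbf p$ and using unitary invariance, the double integral becomes $\pi$ times a one-dimensional integral. In the variable $s=t^2\in[0,1]$ the Fubini--Study measure pushes forward to $\pi\,ds$; equivalently, $x=2s-1=\langle h(\mathbf p),h(\mathbf q)\rangle$ is uniform on $[-1,1]$, which is the usual $\S^2$ inner product distribution. Thus
\begin{equation*}
J_2=-\frac{k^2\pi^2}{2}\int_0^1\log\left(1+\sqrt{1-s}\right)\left(\frac{r^2}{\pi^2}-\frac{r}{\pi^2}\,P_L^{(1,0)}(2s-1)^2\right)ds .
\end{equation*}

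For the first piece, substitute $u=\sqrt{1-s}$: $\int_0^1\log(1+\sqrt{1-s})\,ds=\int_0^1 2u\log(1+u)\,du=\tfrac12$. This gives $-\tfrac{k^2r^2}{4}=-\tfrac{N^2}{4}$, consistent with the leading term.

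The second piece is $\frac{k^2 r}{2}\int_{-1}^{1}\log\bigl(1+\sqrt{(1-x)/2}\bigr)P_L^{(1,0)}(x)^2\,\frac{dx}{2}$. We need it to be $\frac{k^2\sqrt r\log r}{4\pi}+k^2\mathcal O(\sqrt r)$, i.e. the weighted integral of $P_L^{(1,0)}(x)^2$ must be $\frac{\log L}{\pi L^2}+\mathcal O(L^{-2})$.

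\begin{itemize}
\item Away from $x=\pm1$, write $\log(1+\sqrt{(1-x)/2})=g(x)$; $g$ is smooth on $(-1,1)$ with a $\sqrt{1-x}$ singularity only at $x=1$. The Darboux/Szeg\H{o} asymptotic for Jacobi polynomials gives $P_L^{(1,0)}(\cos\theta)^2\approx \frac{2}{\pi L}\,\frac{\cos^2(\cdots)}{\sin^3(\theta/2)\cos(\theta/2)}$, averaging to $\frac{1}{\pi L}(1-x)^{-3/2}(1+x)^{-1/2}$ times constants.
\item Near $x=1$ (small $\theta$) we use $g\approx\sqrt{(1-x)/2}\approx\theta/2$. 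The averaged integrand then behaves like $\frac{C}{L}\theta^{-3}\cdot\theta\cdot\theta\,d\theta\sim\frac{C}{L}\frac{d\theta}{\theta}$, integrated over $1/L\lesssim\theta\lesssim1$. This produces the $\frac{\log L}{L}$ factor, and indeed this is the source of the extra $\log$ (note $\log r=2\log L$).
\item In the region $\theta\lesssim 1/L$, use the Mehler--Heine (Bessel) asymptotics. This contributes $\mathcal O(L^{-1})$, as does the bulk and the $x=-1$ end, where $g$ is bounded and $\int P^2(1+x)\,dx$-type bounds suffice.
\end{itemize}

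The oscillating $\cos^2$ averages to $1/2$, with error terms controlled by integration by parts. With $r\approx L^2$ and the prefactor $k^2r/2$, this yields $\frac{k^2\sqrt r\log r}{4\pi}$ once the constants are tracked.

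The main obstacle is this step: getting the exact constant $1/(4\pi)$ in front of $\sqrt r\log r$. That requires the precise normalization of the Szeg\H{o} asymptotic for $P_L^{(1,0)}$ and care with the uniformity of the Hilb-type formula near $\theta\sim1/L$. I would first try Hilb's formula $P_L^{(1,0)}(\cos\theta)\approx\frac{\sqrt{\theta/\sin\theta}}{\sin^{1}(\theta/2)\cos^{0}}\,\frac{J_1((L+1)\theta)}{\ldots}$, uniformly valid on $[0,\pi-\epsilon]$. This reduces the $\theta\in[0,\epsilon]$ part to $\int_0^{\epsilon}J_1(M\theta)^2\,\theta^{-1}\cdots d\theta$, whose logarithmic growth in $M$ is classical with coefficient coming from $J_1(z)^2\sim\frac{1}{\pi z}$. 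Finally, $N=kr$ lets the result be written in the form stated.
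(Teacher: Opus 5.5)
Your reduction is the paper's. Replacing the paper's $t=\tan(\theta/2)$ in \cref{prop:dpp:iso} by the uniform variable $s=|\langle\mathbf p,\mathbf q\rangle|^2$ is cosmetic, and your $-N\log k$ and $-N^2/4$ terms are correct. The gap is the coefficient $\frac{1}{4\pi}$, which is the only nontrivial content of the statement. You never derive it, and the two normalizations you do write down would give the wrong answer. Write your weighted integral as $W:=\frac12\int_0^{\pi}\log(1+\sin\frac\theta2)P_L^{(1,0)}(\cos\theta)^2\sin\theta\,d\theta$ (substitute $x=\cos\theta$), with $r=(L+1)^2$. Then the claim $\frac{k^2r}{2}W=\frac{k^2\sqrt r\log r}{4\pi}+k^2\mathcal O(\sqrt r)$ is equivalent to $W=\frac{\log(L+1)}{\pi(L+1)}+\mathcal O(L^{-1})$, not to $\frac{\log L}{\pi L^2}+\mathcal O(L^{-2})$; your own bullets, which produce $\frac{\log L}{L}$, already contradict that target. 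Second, Darboux's formula (Szeg\H{o}, Thm.~8.21.8) gives $P_L^{(1,0)}(\cos\theta)^2=\frac{1}{\pi L}\frac{\cos^2((L+1)\theta-3\pi/4)}{\sin^3(\theta/2)\cos(\theta/2)}+\dots$, with prefactor $\frac{1}{\pi L}$ rather than $\frac{2}{\pi L}$. With the correct constant, the averaged integrand of $W$ for $1/L\ll\theta\ll 1$ is $\frac12\cdot\frac\theta2\cdot\frac{1}{2\pi L}\cdot\frac{8}{\theta^3}\cdot\theta=\frac{1}{\pi L\theta}$. This integrates to exactly $\frac{\log L}{\pi L}$; with your factor $2$ you would arrive at $\frac{1}{2\pi}$ instead of $\frac{1}{4\pi}$.

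The clean way to finish, which is what the paper does, is Hilb's formula in exact form. For $(\alpha,\beta)=(1,0)$ the prefactor $N^{-\alpha}\Gamma(n+\alpha+1)/n!$ in Szeg\H{o}'s Thm.~8.21.12 is exactly $1$. So $\sin\frac\theta2\,P_L^{(1,0)}(\cos\theta)=(\theta/\sin\theta)^{1/2}J_1((L+1)\theta)+\varepsilon_L(\theta)$, where $\varepsilon_L=\theta^{1/2}\mathcal O(L^{-3/2})$ for $c/L\le\theta\le\theta_0$ and $\varepsilon_L=\theta^{3}\mathcal O(L)$ for $\theta\le c/L$. We have $\log(1+\sin\frac\theta2)\,\theta/\sin^2\frac\theta2=2+\mathcal O(\theta)$ and $\int_0^{\theta_0}\theta J_1((L+1)\theta)^2\,d\theta=\mathcal O(L^{-1})$. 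Hence the part of $W$ over $[0,\theta_0]$ equals $\frac{1}{L+1}\int_0^{(L+1)\theta_0}J_1(x)^2\,dx+\mathcal O(L^{-1})=\frac{\log(L+1)}{\pi(L+1)}+\mathcal O(L^{-1})$; the terms involving $\varepsilon_L$ are $\mathcal O(L^{-2})$. The interval $[\theta_0,\pi]$ contributes $\mathcal O(L^{-1})$, as you say. Therefore $\frac{k^2r}{2}W=\frac{k^2(L+1)\log(L+1)}{2\pi}+k^2\mathcal O(L)$, which is the claim. Your concern about uniformity near $\theta\sim 1/L$ is legitimate. The paper glosses over it by dividing an $\mathcal O(L^{-1})$ error by $\sin(\theta/2)$, and the sharper error terms above are what actually justify that step.
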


\begin{proof}
See \cref{proof:prop:02}.
\end{proof}

\begin{corollary}\label{cor:Harm:asymp}
Under the assumptions of \cref{prop:02}, if we take $k=\left\lfloor\frac{\sqrt{r}}{\log(r)}\right\rfloor$ points, the asymptotic expansion of the logarithmic energy of this family of points as $N$ goes to infinity is
\begin{equation*}
    \Esp_{\omega_N \sim \HopfHarmonicensemble{k}{r}}
    \left[
    \Energy_{0}(\omega_N)
    \right]
    =
    -
    \frac{N^2}{4}
    - 
    \frac{1}{3}N\log(N) + \frac{2}{3}N\log\log(N)
    +
    \mathcal{O}(N).
\end{equation*}
\end{corollary}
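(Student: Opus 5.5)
The plan is to substitute the choice of $k$ into the expansion of \cref{prop:02},
\begin{equation*}
\Esp\left[\Energy_0(\omega_N)\right]
=
-\frac{N^2}{4}-N\log k+\frac{k^2\sqrt{r}\log r}{4\pi}+k^2\,\mathcal{O}(\sqrt{r}),
\end{equation*}
and then rewrite every term in the variable $N=kr$.

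\textbf{Step 1: express $k$ through $N$.} Put $k=\lfloor \sqrt r/\log r\rfloor$, so $k=\frac{\sqrt r}{\log r}\bigl(1+\mathcal{O}(\log r/\sqrt r)\bigr)$. Then
\begin{equation*}
N=kr=\frac{r^{3/2}}{\log r}\bigl(1+o(1)\bigr),
\end{equation*}
and taking logarithms gives
\begin{equation*}
\log N=\tfrac32\log r-\log\log r+o(1).
\end{equation*}
Inverting, $\log r=\tfrac23\log N+\tfrac23\log\log N+o(1)$, using $\log\log r=\log\log N+\mathcal{O}(1)$. It follows that
\begin{equation*}
\log k=\tfrac12\log r-\log\log r+o(1)=\tfrac13\log N+\tfrac13\log\log N-\log\log N+\mathcal{O}(1).
\end{equation*}

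\textbf{Step 2: the $-N\log k$ term.} Collecting the $\log\log N$ terms in $\log k$ gives $\log k=\tfrac13\log N-\tfrac23\log\log N+\mathcal{O}(1)$. Therefore
\begin{equation*}
-N\log k=-\tfrac13 N\log N+\tfrac23 N\log\log N+\mathcal{O}(N).
\end{equation*}
Here it is essential that $o(1)$ and $\mathcal{O}(1)$ errors inside $\log k$ turn into $\mathcal{O}(N)$ after multiplication by $N$; the floor function only contributes a multiplicative factor $1+o(1)$ to $k$, hence $o(N)$.

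\textbf{Step 3: the remaining terms.} First,
\begin{equation*}
k^2\sqrt r\log r=\frac{r^{3/2}}{\log r}\bigl(1+o(1)\bigr)=N\bigl(1+o(1)\bigr)=\mathcal{O}(N).
\end{equation*}
Second, $k^2\,\mathcal{O}(\sqrt r)=\mathcal{O}(r^{3/2}/\log^2 r)=o(N)$. Both terms therefore fall into $\mathcal{O}(N)$, which gives the stated expansion.

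\textbf{Main obstacle.} The delicate part is the careful inversion of $N\leftrightarrow r$ with double logarithms in Step 1. One must make sure the error in $\log\log r$ versus $\log\log N$ is genuinely $\mathcal{O}(1)$, since $\log\log r=\log\log N+\log\tfrac23+o(1)$. An error of size $\mathcal{O}(1)$ there is absorbed in $\mathcal{O}(N)$ after multiplication by $N$, so it does not affect the $N\log\log N$ coefficient.

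One should also check that the implicit constant in the $k^2\mathcal{O}(\sqrt r)$ term of \cref{prop:02} is uniform under this choice of $k$ depending on $r$. This holds because that error does not depend on $k$: $k$ enters only through the factor $k^2$ in the formula of \cref{prop:dppHopf}.
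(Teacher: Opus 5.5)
Your proposal is correct and is essentially the paper's argument: substitute $k$ into \cref{prop:02}, invert $N=kr$ to get $\log k=\tfrac13\log N-\tfrac23\log\log N+\mathcal{O}(1)$, and absorb $k^2\sqrt r\log r\sim N$ and $k^2\,\mathcal{O}(\sqrt r)=o(N)$ into $\mathcal{O}(N)$. Your $\mathcal{O}(1)$ (rather than $o(1)$) in $\log k$ is actually more accurate than the paper, whose inversion $r=N^{2/3}(\log N)^{2/3}(1+o(1))$ omits a factor $(2/3)^{2/3}$; your own intermediate line $\log r=\tfrac23\log N+\tfrac23\log\log N+o(1)$ should likewise read $+\mathcal{O}(1)$, but in both cases this only affects the $\mathcal{O}(N)$ term.
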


\begin{proof}
    See \cref{proof:prop:02}.
\end{proof}

\subsection{Spherical vs Harmonic ensemble}

In order to facilitate a direct comparison between the Spherical and the Harmonic ensembles, we adopt a unified notation for the number of points. 
Let
\begin{equation*}
    r = (L+1)^2.
\end{equation*}
Then, the Harmonic ensemble on $\S^2$ consists of $r$ points, where $r$ is necessarily a perfect square. 
In contrast, the Spherical ensemble can be defined for an arbitrary positive integer $r\in\mathbb N$ of points, with no arithmetic restriction. 
For the reader’s convenience, we summarize the relevant kernel formulas in
\cref{tab:kernels-comparison}.

\begin{table}[ht]
\centering
\caption{Correlation kernels of the Spherical and Harmonic ensembles on $\S^2$ and $\CP^1$, where, for the points in $\CP^1$, $x=[\mathbf p]$, $y=[\mathbf q]$ with $\mathbf p,\mathbf q\in\C^2\setminus\{0\}$.}
\label{tab:kernels-comparison}
\begin{tabular}{c c c}
\hline
Ensemble & Ambient space & Correlation kernel \\
\hline
Spherical & $\CP^1$ &
$\displaystyle
\bigl|K^{\CP^1}_{r}(x,y)\bigr|
=\frac{r}{\pi}
\left|
\left\langle
\frac{\mathbf p}{\|\mathbf p\|},
\frac{\mathbf q}{\|\mathbf q\|}
\right\rangle
\right|^{r-1}
$ \\[2mm]

Spherical & $\S^2$ &
$\displaystyle
\bigl|K^{\S^2}_{r}(x,y)\bigr|
=\frac{r}{4\pi}
\left(\frac{1+\langle x,y\rangle}{2}\right)^{r-1}
$ \\[2mm]

Harmonic & $\CP^1$ &
$\displaystyle
K^{\CP^1}_L(x,y)
=
\frac{\sqrt{r}}{\pi}\,
P^{(1,0)}_{L}\!\left(
2\left|
\left\langle
\frac{\mathbf p}{\|\mathbf p\|},
\frac{\mathbf q}{\|\mathbf q\|}
\right\rangle
\right|^2
-1
\right)
$ \\[2mm]

Harmonic & $\S^2$ &
$\displaystyle
K^{\S^2}_L(x,y)
=
\frac{\sqrt{r}}{4\pi}\,
P^{(1,0)}_{L}\!\bigl(\langle x,y\rangle\bigr)
$ \\
\hline
\end{tabular}
\end{table}

This table highlights the structural differences between both ensembles. While
the Spherical ensemble leads to power-law kernels, the Harmonic ensemble exhibits
polynomial correlations governed by Jacobi polynomials. In both cases, the
kernels are rotationally invariant and admit natural formulations both on the
sphere and on the complex projective line.



\section{Proof of the main results}

\subsection{Auxiliary results}


\begin{proposition}\label{prop_energia_fibra}
The logarithmic energy of the $k$ roots of unity on $\S^1$ is $-k\log k.$
\end{proposition}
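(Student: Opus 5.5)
We need to show $\sum_{j_1\neq j_2}\log|\zeta^{j_1}-\zeta^{j_2}| = k\log k$, where $\zeta=e^{2\pi \underline{i}/k}$. The plan is to reduce to a single product identity for the roots of unity.

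\textbf{Step 1: rotational invariance.} Multiplying all points by $\zeta^{-j_1}$ is an isometry of $\S^1$. Hence, for each fixed $j_1$, the inner sum over $j_2\neq j_1$ equals
\begin{equation*}
\sum_{m=1}^{k-1}\log\left|1-\zeta^{m}\right| .
\end{equation*}
Since this does not depend on $j_1$, the total energy is
\begin{equation*}
-k\log\prod_{m=1}^{k-1}\left|1-\zeta^{m}\right| .
\end{equation*}

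\textbf{Step 2: the product identity.} The $k$-th roots of unity are exactly the roots of $z^k-1$, so
\begin{equation*}
z^k-1=\prod_{m=0}^{k-1}(z-\zeta^m).
\end{equation*}
Dividing by $z-1$ gives
\begin{equation*}
1+z+\dots+z^{k-1}=\prod_{m=1}^{k-1}(z-\zeta^m).
\end{equation*}
Evaluating at $z=1$ yields
\begin{equation*}
\prod_{m=1}^{k-1}(1-\zeta^m)=k.
\end{equation*}
Taking absolute values, we obtain
\begin{equation*}
\prod_{m=1}^{k-1}\left|1-\zeta^m\right|=k.
\end{equation*}

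\textbf{Step 3: conclusion.} Substituting Step 2 into the expression from Step 1 gives an energy of $-k\log k$.

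A remark on conventions: the distances are Euclidean distances in the ambient plane. On a great circle of $\S^3$ these agree with the distances used in the computation of $\boldsymbol{J_1}$, because $\|\mathbf{x}_i\|=1$. There is no real obstacle here; the only step requiring care is the division by $z-1$ followed by evaluation at $z=1$. That step is legitimate as a polynomial identity, since both sides of $z^k-1=\prod_{m=0}^{k-1}(z-\zeta^m)$ are divisible by $z-1$ as polynomials.
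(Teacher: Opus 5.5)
Your proof is correct. Rotational invariance reduces the energy to $-k\log\prod_{m=1}^{k-1}\lvert 1-\zeta^m\rvert$, and evaluating $1+z+\dots+z^{k-1}=\prod_{m=1}^{k-1}(z-\zeta^m)$ at $z=1$ shows this product equals $k$. The paper gives no argument of its own beyond calling this an easy computation and citing Brauchart--Hardin--Saff, and your proof is the standard computation behind that reference.
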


\begin{proof}
It is an easy computation, the interested reader can check it in \cite{brauchart_riesz_2009}.
\end{proof}


\begin{lemma}\label{lema_ruso}
Let $a\geq|b|>0$, then
$$\int_0^\pi\log(a+b\cos x)dx=\pi\log\left(\frac{a+\sqrt{a^2-b^2}}{2}\right).$$
\end{lemma}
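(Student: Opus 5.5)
The plan is to factor the integrand as a constant times the squared modulus of a linear function of $e^{ix}$, and then use the mean value property of harmonic functions. Concretely, we look for $\lambda>0$ and a real $\rho$ with $|\rho|\le 1$ such that
\begin{equation*}
a+b\cos x=\lambda\left(1+\rho^2+2\rho\cos x\right)=\lambda\,\bigl|1+\rho e^{\underline{i}x}\bigr|^2 .
\end{equation*}
Matching coefficients gives $\lambda(1+\rho^2)=a$ and $2\lambda\rho=b$. Hence $\rho$ solves $b\rho^2-2a\rho+b=0$. The two roots are mutually reciprocal, so we choose the root of modulus at most one:
\begin{equation*}
\rho=\frac{a-\sqrt{a^2-b^2}}{b},
\end{equation*}
and $|\rho|\le 1$ precisely because $a\ge |b|$. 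Substituting back and using $\bigl(a-\sqrt{a^2-b^2}\bigr)\bigl(a+\sqrt{a^2-b^2}\bigr)=b^2$ gives
\begin{equation*}
\lambda=\frac{b}{2\rho}=\frac{a+\sqrt{a^2-b^2}}{2},
\end{equation*}
which is exactly the argument of the logarithm on the right-hand side.

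With this factorisation,
\begin{equation*}
\int_0^\pi\log(a+b\cos x)\,dx=\pi\log\lambda+\int_0^\pi\log\bigl|1+\rho e^{\underline{i}x}\bigr|^2dx .
\end{equation*}
It therefore remains to show that the last integral vanishes. Since $|1+\rho e^{\underline{i}x}|^2=1+\rho^2+2\rho\cos x$ is even in $x$, that integral equals
\begin{equation*}
\frac12\int_0^{2\pi}\log\bigl|1+\rho e^{\underline{i}x}\bigr|^2dx=\int_0^{2\pi}\log\bigl|1+\rho e^{\underline{i}x}\bigr|\,dx .
\end{equation*}
For $|\rho|<1$, the function $z\mapsto \log|1+\rho z|$ is harmonic on a neighbourhood of the closed unit disc, because $1+\rho z$ has no zero there. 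The mean value property then gives $2\pi\log|1+0|=0$, which concludes this case.

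The only delicate step is the boundary case $a=|b|$, that is $|\rho|=1$. There the integrand $\log|1\pm e^{\underline{i}x}|$ has an integrable logarithmic singularity at one point. I would handle it in one of two ways. The first is to note that $\int_0^{2\pi}\log|1-e^{\underline{i}x}|\,dx=\int_0^{2\pi}\log|2\sin(x/2)|\,dx=0$, by the classical identity $\int_0^\pi\log\sin t\,dt=-\pi\log 2$. Alternatively, one can let $|\rho|\uparrow 1$ and pass to the limit by dominated convergence, since $\bigl|\log|1+\rho e^{\underline{i}x}|\bigr|$ is dominated by an integrable function uniformly in $|\rho|\le 1$. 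Either route also shows that both sides of the identity are continuous in $(a,b)$ up to $a=|b|$, so the formula holds on the whole range $a\ge|b|>0$.
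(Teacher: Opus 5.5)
Your proof is correct. It differs from the paper in that the paper gives no derivation at all: it quotes the identity from Gradshteyn--Ryzhik (Formula 4.224).

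You actually prove it, by recognising it as Jensen's formula for a degree-one polynomial. You factor $a+b\cos x=\lambda\,|1+\rho e^{ix}|^2$, where $\rho$ is the root of $b\rho^2-2a\rho+b=0$ lying in the closed unit disc and $\lambda=\frac{a+\sqrt{a^2-b^2}}{2}$. This isolates the right-hand side as $\pi\log\lambda$, and the mean value property of $\log|1+\rho z|$ shows the remainder vanishes.

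The citation buys brevity. Your route buys a self-contained proof that explains where $\frac{a+\sqrt{a^2-b^2}}{2}$ comes from: it is the leading factor once the root inside the disc is chosen. It also shows why the endpoint $a=|b|$ causes no trouble.

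If you keep the endpoint discussion, two details deserve a line each. First, for $a=b$ you have $\rho=+1$, so you need $\int_0^{2\pi}\log|1+e^{ix}|\,dx=0$; this follows from your $\rho=-1$ computation via the shift $x\mapsto x+\pi$. Second, for the dominated-convergence alternative you should state an explicit majorant, and one is easy to give:
\begin{itemize}
\item $|1+\rho e^{ix}|\le 2$ always;
\item $|1+\rho e^{ix}|\ge\frac12$ when $|\rho|\le\frac12$;
\item when $\frac12\le|\rho|\le1$, $|1+\rho e^{ix}|^2=(1-|\rho|)^2+2|\rho|(1\pm\cos x)\ge 1\pm\cos x$, with the sign that of $\rho$, and $\log(1\pm\cos x)$ is integrable on $[0,2\pi]$.
\end{itemize}
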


\begin{proof}
Formula obtained from \cite{gradshtein_table_2007}*{Formula 4.224}.
\end{proof}

\subsection{Points coming from the uniform distribution}\label{calculo_uniforme_s2}

Points uniformly distributed on $\S^2$ take the form:
\begin{equation}\label{eq:02}
    (x,y,z)\subset\R^3 :
    \begin{cases}
    x=\sqrt{1-u^2}\cos\phi \\
    y=\sqrt{1-u^2}\sin\phi \\
    z=u 
    \end{cases}
    \text{ with }
    \begin{cases}
    \phi \in [0, 2\pi] \text{ uniform},\\
    u \in [-1, 1] \text{ uniform}.\\
    \end{cases}
\end{equation}
The image of such points by the inverse of the Hopf fibration given in \cref{inversa_hopf} is
\begin{equation*}
    \mathbf{x}
    =
    \frac{1}{\sqrt{2}}\left(\sqrt{1+u}\cos t, \sqrt{1+u}\sin t, \sqrt{1-u}\sin (t-\phi),\sqrt{1-u}\cos (t-\phi)\right),
\end{equation*}
with $t\in[0,2\pi)$.
Hence, the distance between two points, $\mathbf{x}_i$ and $\mathbf{x}_j$, is given by
\begin{multline*}
||\mathbf{x}_i-\mathbf{x}_j||
=
\left[2-\sqrt{1+u_i}\sqrt{1+u_j}\cos(t_i-t_j)
\right.\\\left.
-\sqrt{1-u_i}\sqrt{1-u_j}\cos((t_i-t_j)-(\phi_i-\phi_j))\right]^{1/2},
\end{multline*}
with $t_i,t_j\in[0,2\pi)$.
Since $\S^3$ is a 2-point homogeneous space, we can take $\mathbf{x}_i=(1, 0, 0, 0)$ to simplify the calculations. Thus, the distance is
\begin{equation*}
||\mathbf{x}_i-\mathbf{x}_j||=\sqrt{2-\sqrt{2(1+u_j)}\cos t_j}.    
\end{equation*}
Then, the expectation of the logarithm of the inverse of the distance of a pair of points belonging to different fibres is given by
\begin{multline*}
    \Esp_{\phi_j, u_j, t_j}
    [-\log||(1,0,0,0)-\mathbf{x}_j||]
    \\
    =\frac{-1}{4\pi}\int_{-1}^1\int_0^{2\pi}\log\left(\sqrt{2-\sqrt{2(1+u_j)}\cos t_j}\right)dt_jdu_j
    \\
    =\frac{-1}{4}\int_{-1}^1\log\left(1+\frac{\sqrt{2-2u}}{2}\right)du=\frac{-1}{4},
\end{multline*}
where \cref{lema_ruso} has been used to pass from the second line to the third.

For the computation of the expected logarithmic energy, we have to consider all the pairs of points belonging to the same fibre.
In this case, the energy will correspond to the roots of unity, so, according to \cref{prop_energia_fibra}, we have
\begin{equation*}
    \Energy_{=fibre}=-\log k
\end{equation*}
for each point in the fibre. 
Thus the energy will be
\begin{multline*}
    \Esp_{\substack{\phi_j, u_j, v_j, \\  1\leq j \leq M}}
    [\Energy_{0}(\omega_N)]
    =
    N[E_{\neq fibre}+ E_{=fibre}]
    \\
    =
    N\left[\frac{-1}{4}(N-k)-\log k\right]=\frac{-1}{4}N^2+\frac{1}{4}Nk-N\log k,
\end{multline*}
where $\omega_N$ is the set of all $N=kM$ points obtained on $\S^3$ by the procedure $\HopfUniform{k}{M}$.
Let us consider whether there exists an optimal value of $k$ that minimizes it. 
$\Esp_{\substack{\phi_j, u_j, v_j, \\  1\leq j \leq M}} [\Energy_{0}(\omega_N)]$ is a continuous and differentiable function of $k$ on $[1,\infty)$.
To find the optimal $k$, we set the derivative of the expected energy (with respect to $k$) equal to zero:
\begin{equation*}
  \frac{\partial}{\partial k}\Esp_{\substack{\phi_j, u_j, v_j, \\  1\leq j \leq M}} [\Energy_{0}(\omega_N)]
  =
  \frac{1}{4}N-\frac{N}{k}.  
\end{equation*}
The derivative vanishes in $[1,\infty)$ if and only if $k=4$, and the second derivative with respect to $k$ is positive in this interval, so one can conclude that the function attains its minimum when $k=4$.
Therefore, the optimal number of points in each fibre is $4$. 
This result may seem counter-intuitive, because regardless of the number of points on $\S^2$, the optimal number of points per fibre is always 4. The asymptotic expansion of the logarithmic energy for this choice is then:
\begin{equation*}
    \Esp_{\substack{\phi_j, u_j, v_j, \\  1\leq j \leq M}} [\Energy_{0}(\omega_N)]
    =
    \frac{-1}{4}N^2+(1-2\log 2)N.
\end{equation*}
%


\subsubsection*{Calculation with antipodal points}

As explained in \cref{sec:unif:hopf}, we impose symmetry on $\S^2$ to obtain a better distribution there, and then compute the points along the fibres.

Following the notation from \cref{eq:02}, points that are antipodal to those given in \eqref{eq:02} have the following form:
\begin{equation*}
(x,y,z)\in\R^3 :
\begin{cases}
x=\sqrt{1-u^2}\cos(\phi+\pi) \\
y=\sqrt{1-u^2}\sin(\phi+\pi) \\
z=-u 
\end{cases}
\text{ with }
\begin{cases}
\tilde{\phi}=\phi+\pi,\\
\tilde{u}=-u.\\
\end{cases}  
\end{equation*}
where $\phi$ and $u$ are distributed as in \eqref{eq:02}.
From these points and by means of \cref{inversa_hopf} we obtain the fibres
\begin{equation*}
\mathbf{\tilde{x}}=\frac{1}{\sqrt{2}}\left(
\sqrt{1-u}\cos \tilde{t}, \sqrt{1-u}\sin \tilde{t}, \sqrt{1+u}\sin (\tilde{t}-\phi-\pi),\sqrt{1+u}\cos (\tilde{t}-\phi-\pi)\right),
\end{equation*}
where $\tilde{t}=\frac{2\pi l}{k}+\theta$, with $\theta\in[0,2\pi)$ distributed uniformly randomly, represents the angle within the fibre.
Therefore, the expected sum of the logarithm of the inverse of the distances between a point and all the points on the \emph{antipodal fibre} is
\begin{multline*}
\Energy_{\substack{antipodal \\ \textit{fibre}}}
=
-\frac{1}{2}
\Esp_{\theta}
\left[
\sum_{i=1}^{k}\log||\mathbf{x}-\mathbf{\tilde{x}}||^2
\right]
\\
=
-
\frac{1}{2}
\Esp_{\theta}
\left[
\sum_{i=1}^{k}\log
\left(
2-\sqrt{1-u^2}\left[\cos(t-\tilde{t})+\cos(t-\tilde{t}+\pi)\right]
\right)
\right]
\\
=
-
\frac{1}{2}
\Esp_{\theta}
\left[
\sum_{i=1}^{k}\log
(2)
\right]
=
\frac{-\log(2)}{2}k.
\end{multline*}
The asymptotic expansion of the expected logarithmic energy will then be
\begin{multline*}    
\Esp_{\substack{\phi_j, u_j, v_j, \\  1\leq j \leq M/2}}
[\Energy_{0}(\omega_N)]
\\
=
N\left(
\Energy_{=fibre}
+ 
\Energy_{\substack{antipodal \\ \textit{fibre}}}
+
\Esp_{\substack{\phi_j, u_j, v_j, \\  1\leq j \leq N}}\left[\Energy_{\substack{\neq fibre \\ \textit{no antipodal}}}\right]
\right)
\\
=N\left(-\log k+\frac{-\log 2}{2}k+\frac{-1}{4}(N-2k)\right)
\\=
\frac{-1}{4}N^2+\frac{Nk}{2}(1-\log2)-N\log k.
\end{multline*}
Similarly to the previous case, it is now relevant to consider the optimal choice of the number of points per fibre, denoted as $k$.
Taking the derivative with respect to $k$ we have
$$
  \frac{\partial}{\partial k}\Esp_{\substack{\phi_j, u_j, v_j, \\  1\leq j \leq M/2}} [\Energy_{0}(\omega_N)]
=
\frac{N}{2}(1-\log2)-\frac{N}{k}.$$
Setting it equal to zero:
$$
  \frac{\partial}{\partial k}\Esp_{\substack{\phi_j, u_j, v_j, \\  1\leq j \leq M/2}} [\Energy_{0}(\omega_N)]
=
0 \Rightarrow k=\frac{2}{1-\log2}\approx 6.51...$$
Furthermore, the second derivative is positive.
Therefore, the minimum occurs at either $k=6$ or $k=7$. Substituting and considering the linear term:
\begin{itemize}
    \item $k=6 \longrightarrow 3(1-\log2)-\log6\approx-0.8712...$
    \item $k=7 \longrightarrow \frac{7}{2}(1-\log2)-\log7\approx-0.8719...$
\end{itemize}
Thus, the minimum is achieved at $k=7$, and the asymptotic expansion is
\begin{equation*}
\Esp_{\substack{\phi_j, u_j, v_j, \\  1\leq j \leq M/2}}
[\Energy_{0}(\omega_N)]
 =
 \frac{-1}{4}N^2+\left(\frac{7}{2}(1-\log2)-\log 7\right)N.
\end{equation*}
Again, the optimal $k$ is a fixed number ($7$ in this case), independent of the total number of points.


\subsection{Points coming from determinantal point processes}
Both of the \gls{dpp} that we are working with are homogeneous and  satisfy
\begin{equation*}
    |K(\mathbf{p, q})|^2 = K(\mathbf{p, p})^2 f(|<\mathbf{p, q}>|).
\end{equation*}

\begin{proposition}\label{prop:dpp:iso}
Let $\nu_r = \{ \mathbf{x}_1,\ldots,\mathbf{x}_r \} \subset\mathbb{CP}^1$ be a set of random points following the distribution of a \gls{dpp} that we denote by $\mathcal{X}^r$, with associated reproducing kernel $K$ satisfying
\begin{equation*}
 |K(\mathbf{p, q})|^2 = K(\mathbf{p, p})^2 f(|<\mathbf{p, q}>|).
\end{equation*}
For each point in $\nu_r$ let us consider a set of $k$ roots of unity on the fibre corresponding to the inverse of the point via the Hopf fibration.
Let $\omega_N$ be set of all $N=kr$ points obtained on $\S^3$ by this procedure that we denote by $\Hopfdpp{k}{r}$.
Then:
\begin{multline*}
    \Esp_{\omega_N \sim \Hopfdpp{k}{r}} \left[\Energy_{0}(\omega_N)\right]
    \\
    =   
    -
    rk\log k
    -
    \frac{k^2r^2}{4}
    +
    k^2r^2
    \int_{0}^\infty \log\left(1+\sqrt{1-\frac{1}{1+t^2}}\right)
    f\left(\frac{1}{\sqrt{1+t^2}}\right)
    \frac{t dt}{(1+t^2)^2}
    .
\end{multline*}
\end{proposition}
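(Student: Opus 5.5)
Starting from \cref{prop:dppHopf}, the term $-rk\log k$ already matches, so the plan is to evaluate the double integral
\begin{equation*}
I=\int_{\CP^1\times\CP^1}\log\left(1+\sqrt{1-\langle\mathbf p,\mathbf q\rangle^2}\right)\left(K(\mathbf p,\mathbf p)^2-|K(\mathbf p,\mathbf q)|^2\right)d\mathbf p\,d\mathbf q
\end{equation*}
in the form $-\frac{k^2}{2}I = -\frac{k^2r^2}{4}+k^2r^2\int_0^\infty(\cdots)$. Throughout, $\langle\mathbf p,\mathbf q\rangle$ stands for the modulus $|\langle\mathbf p,\mathbf q\rangle|$ of the inner product of unit representatives, which is well defined on $\CP^1$. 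By homogeneity (\cref{cor:constantkernel}) and ${\rm Vol}(\CP^1)=\pi$, we have $K(\mathbf p,\mathbf p)=r/\pi$. The hypothesis then gives $K(\mathbf p,\mathbf p)^2-|K(\mathbf p,\mathbf q)|^2=\frac{r^2}{\pi^2}\bigl(1-f(|\langle\mathbf p,\mathbf q\rangle|)\bigr)$. The integral therefore splits as $I=\frac{r^2}{\pi^2}(I_1-I_2)$, where $I_1$ has the weight $1$ and $I_2$ has the weight $f$.

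Next I reduce to a single integral using the transitivity of $U(2)$ on $\CP^1$, which preserves both $|\langle\cdot,\cdot\rangle|$ and the Fubini--Study measure. Fixing $\mathbf p=[1:0]$ contributes a factor $\pi$ from the outer integral. In the affine chart $\mathbf q=[1:w]$ the unit representative is $(1,w)/\sqrt{1+|w|^2}$, so $|\langle\mathbf p,\mathbf q\rangle|=1/\sqrt{1+|w|^2}$, and $d\mathrm{vol}_{FS}=dA(w)/(1+|w|^2)^2$ by \cref{prop:diffCP1S2}. Switching to polar coordinates $w=te^{i\alpha}$ gives $dA=t\,dt\,d\alpha$, and the angular integral contributes $2\pi$. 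Hence
\begin{equation*}
\int_{\CP^1\times\CP^1}g(|\langle\mathbf p,\mathbf q\rangle|)\,d\mathbf p\,d\mathbf q=2\pi^2\int_0^\infty g\!\left(\tfrac{1}{\sqrt{1+t^2}}\right)\frac{t\,dt}{(1+t^2)^2}.
\end{equation*}
Applied to $I_2$, this yields $-\frac{k^2}{2}\cdot\frac{r^2}{\pi^2}\cdot(-2\pi^2)\int\cdots=+k^2r^2\int_0^\infty\log\bigl(1+\sqrt{1-\tfrac{1}{1+t^2}}\bigr)f\bigl(\tfrac1{\sqrt{1+t^2}}\bigr)\frac{t\,dt}{(1+t^2)^2}$, which is exactly the last term of the statement.

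For $I_1$ I need $2\pi^2J$ with $J=\int_0^\infty\log\bigl(1+\tfrac{t}{\sqrt{1+t^2}}\bigr)\frac{t\,dt}{(1+t^2)^2}$. Substituting $s=t/\sqrt{1+t^2}\in[0,1)$ gives $1-s^2=1/(1+t^2)$ and $s\,ds=t\,dt/(1+t^2)^2$, so $J=\int_0^1 s\log(1+s)\,ds$. Integration by parts evaluates this elementary integral as $J=\tfrac14$. It follows that $-\frac{k^2}{2}\cdot\frac{r^2}{\pi^2}\cdot 2\pi^2\cdot\frac14=-\frac{k^2r^2}{4}$, as required.

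The main obstacle is bookkeeping rather than analysis. First, one must fix the convention that $\langle\mathbf p,\mathbf q\rangle$ means the modulus, which is needed for the formula in \cref{prop:dppHopf} to make sense on $\CP^1$. Second, the normalizations have to be consistent: the Fubini--Study measure with total volume $\pi$ and density $(1+|w|^2)^{-2}$, together with $K(\mathbf p,\mathbf p)=r/\pi$. As a check, these agree with the uniform case: $J=\tfrac14$ reproduces the value $-\tfrac14$ per pair found in \cref{calculo_uniforme_s2}.
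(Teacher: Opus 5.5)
Your proposal is correct and follows the paper's proof: homogeneity gives $K(\mathbf p,\mathbf p)=r/\pi$, two-point homogeneity lets you fix $\mathbf p$, and you then use the affine chart with density $(1+|w|^2)^{-2}$ and polar coordinates. The only difference is in evaluating the constant $\frac14$: your single substitution $s=t/\sqrt{1+t^2}$ reduces it to $\int_0^1 s\log(1+s)\,ds$, whereas the paper splits the logarithm as $\log(\sqrt{1+t^2}+t)-\frac12\log(1+t^2)$ and computes $\frac12-\frac14$ with two separate substitutions.
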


\begin{proof}
We use \cref{prop:dppHopf} and follow the notation of its proof, together with \cref{cor:constantkernel}, to state that
\begin{multline*}
    \boldsymbol{J_2}=\frac{-k^2}{2}\int_{\mathbb{CP}^1\times\mathbb{CP}^1} \log\left(1+\sqrt{1-<\mathbf{p, q}>^2}\right)\left(K(\mathbf{p,p})^2-|K(\mathbf{p,q})|^2\right)d\mathbf{p}d\mathbf{q}
    \\
    =\frac{-k^2r^2}{2\pi^2}\int_{\mathbb{CP}^1\times\mathbb{CP}^1} 
    \log\left(1+\sqrt{1-<\mathbf{p, q}>^2}\right)
    \left(1-f(|<\mathbf{p,q}>|)\right)d\mathbf{p}d\mathbf{q}.
\end{multline*}
Since $\mathbb{CP}^1$ is 2-point homogeneous and its volume is $\pi$, in order to compute this quantity, we can fix $\mathbf{p}$ to be $\mathbf{e_1}=(1,0,0)$:
$$\boldsymbol{J_2}=\frac{-k^2r^2}{2\pi}\int_{\mathbb{CP}^1} \log\left(1+\sqrt{1-<\mathbf{e_1, q}>^2}\right)\left(1-f(|<\mathbf{e_1,q}>|)\right)d\mathbf{q}.$$
Let $\psi$ be the map:
\begin{align*}
    \psi: \mathbb{C} & \longrightarrow \mathbb{CP}^1\\
    \mathbf{z} & \longmapsto (\mathbf{z},1),
\end{align*}
then its normal Jacobian is
\begin{equation*}
NJac(\psi)(\mathbf{z})
=
\left(\frac{1}{1+||\mathbf{z}||^2}\right)^{2},    
\end{equation*}
and so:
\begin{multline*}
    \boldsymbol{J_2}=\frac{-k^2r^2}{2\pi}\int_{\mathbb{C}}
    \left[
    \log\left(1+\sqrt{1-\left<\mathbf{e_1}, \frac{(1,\mathbf{z})}{\sqrt{1+||\mathbf{z}||^2}}\right>^2}\right)
    \right.
    \\
    \left.
    \times
    \left(1-f\left(\left|\left<\mathbf{e_1},\frac{(1,\mathbf{z})}{\sqrt{1+||\mathbf{z}||^2}}\right>\right|\right)\right)\frac{1}{(1+||\mathbf{z}||^2)^2}d\mathbf{z}
    \right]
     \\
     =\frac{-k^2r^2}{2\pi}\int_{\mathbb{C}} \log\left(1+\sqrt{1-\frac{1}{1+||\mathbf{z}||^2}}\right)
     \left(1-f\left(\frac{1}{\sqrt{1+||\mathbf{z}||^2}}\right)\right)\frac{1}{(1+||\mathbf{z}||^2)^2}d\mathbf{z}.
\end{multline*}
Changing to polar coordinates:
\begin{multline*}
    \boldsymbol{J_2}
    =
    -k^2r^2 \left[
    \int_{0}^\infty \log\left(1+\sqrt{1-\frac{1}{1+t^2}}\right)
    \frac{tdt}{(1+t^2)^2}
    \right.\\\left.
    -
    \int_{0}^\infty \log\left(1+\sqrt{1-\frac{1}{1+t^2}}\right)
    f\left(\frac{1}{\sqrt{1+t^2}}\right)
    \frac{t}{(1+t^2)^2}
    dt
    \right].
\end{multline*}
Where the first integral is
\begin{multline}\label{eq:05}
    \int_{0}^\infty \log\left(1+\sqrt{1-\frac{1}{1+t^2}}\right) \frac{tdt}{(1+t^2)^2} 
    \\
    = 
    \int_{0}^\infty \log\left(\sqrt{1+t^2} + t \right) \frac{tdt}{(1+t^2)^2} 
    -\frac{1}{2}
    \int_{0}^\infty \log\left(1+t^2 \right) \frac{tdt}{(1+t^2)^2}.
\end{multline}
For the first integral of \eqref{eq:05}, the hyperbolic change of variables $t = \sinh u$, and the posterior integration by parts, give us
\begin{equation*}
    \int_{0}^\infty \log\left(\sqrt{1+t^2} + t \right) \frac{tdt}{(1+t^2)^2} 
    =
    \int_{0}^\infty \frac{\sinh u}{\cosh^3u} u du
    =
    \frac{1}{2}.
 \end{equation*}
The second integral of \eqref{eq:05} can be computed using the change of variables $u=1+t^2$ and integrating by parts:
\begin{equation*}
    -\frac{1}{2}
    \int_{0}^\infty \log\left(1+t^2 \right) \frac{tdt}{(1+t^2)^2}
    =
    -\frac{1}{4}
    \int_{1}^\infty \log\left( u \right) \frac{du}{u^2}
    =
    -\frac{1}{4}
    \int_{1}^\infty 
    \frac{du}{u^2}
    =
    -\frac{1}{4}.
\end{equation*}
Hence we conclude
\begin{equation*}
    \int_{0}^\infty \log\left(1+\sqrt{1-\frac{1}{1+t^2}}\right) \frac{tdt}{(1+t^2)^2}
    =
    \frac{1}{4}.
\end{equation*}
\end{proof}

\subsection{Points coming from the Spherical ensemble}

To compute the expected logarithmic energy, we use \cref{prop:dpp:iso} with
\begin{equation*}
    f(|<\mathbf{p, q}>|)
    =
    |<\mathbf{p, q}>|^{2(r-1)},
\end{equation*}
where $\mathbf{p, q}$ are unit norm representatives and therefore,
\begin{equation*}
    f\left( \frac{1}{\sqrt{1+t^2}} \right)
    =
    \frac{1}{(1+t^2)^{(r-1)}}
    .
\end{equation*}
Hence, integrating by parts, we obtain
\begin{multline}\label{eq:06}
    I_{\text{\eqref{eq:06}}}
    =
    \int_{0}^\infty \log\left(1+\sqrt{1-\frac{1}{1+t^2}}\right)
    \frac{tdt}{(1+t^2)^{r+1}} 
    \\
    =
    \frac{1}{2r}
    \left[
    \int_0^\infty \frac{\sqrt{1+t^2}}{(t^2+1)^{r+1}}dt
    -
    \int_0^\infty \frac{t}{(t^2+1)^{r+1}}dt    
    \right],
\end{multline}
where the first integral is a Beta integral
\begin{equation*}
    \int_0^\infty \frac{\sqrt{1+t^2}}{(t^2+1)^{r+1}}dt
    =
    \frac{\sqrt{\pi}\Gamma(r)}{2\Gamma\left( r+\frac{1}{2}\right)},
\end{equation*}
and for the second integral, we use the change of variables $t=\tan\theta$ and then the change of variables $u=\cos\theta$ 
\begin{equation*}
    -
    \int_0^\infty \frac{t}{(t^2+1)^{r+1}}dt
    =
    -
    \int_{0}^{\pi/2} 
    \sin\theta
    \left(\cos\theta\right)^{2r-1}
    d\theta
    =
    -
    \int_{0}^{1} 
    u^{2r-1}
    du
    =
    -
    \frac{1}{2r}.
\end{equation*}
Summarizing, we have that 
\begin{equation*}
    I_{\text{\eqref{eq:06}}}
    =
    \frac{1}{2r}
    \frac{\sqrt{\pi}\Gamma(r)}{2\Gamma\left( r+\frac{1}{2}\right)}
    -
    \frac{1}{2r}
    \frac{1}{2r}
    =
    \frac{\sqrt{\pi}\Gamma(r+1)-\Gamma\left( r+\frac{1}{2}\right)}{4r^2\Gamma\left( r+\frac{1}{2}\right)}.
\end{equation*}
Thus, we have
$$
\boldsymbol{J_2}
=
-k^2r^2\left[
\frac{1}{4}
-
\frac{\sqrt{\pi}\Gamma(r+1)-\Gamma\left( r+\frac{1}{2}\right)}{4r^2\Gamma\left( r+\frac{1}{2}\right)}
\right]
$$
and then
\begin{equation}\label{E_dpp_sin_simplificar}
    \Esp_{\omega_N \sim \HopfSphericalensemble{k}{r}}
    \left[
    \Energy_{0}(\omega_N)
    \right]
    =
    \boldsymbol{J_1}+\boldsymbol{J_2}
    =
    -
    \frac{k^2r^2}{4}
    -
    rk\log k
    +
    k^2\frac{\sqrt{\pi}\Gamma(r+1)}{4\Gamma\left( r+\frac{1}{2}\right)}
    -
    \frac{k^2}{4}    
    . 
\end{equation}
%
\subsubsection{Proof of \cref{prop:01}}\label{proof:prop:01}

Stirling formula states that
$$
n!
=
\sqrt{2\pi n}\left(\frac{n}{e}\right)^n+O\left(\frac{n^{n-1/2}}{e^n}\right)
$$
for $n$ large enough. 
If we use the formula, we have, after carefully manipulation,
\begin{equation*}
    \frac{\sqrt{\pi}\Gamma(r+1)}{\Gamma\left( r+\frac{1}{2}\right)}
    =
    \frac{2^{2r}(r!)^2}{(2r)!}
=\sqrt{\pi r}
+O\left(\frac{1}{\sqrt{r}}\right).
\end{equation*}
Substituting in the asymptotic expansion of \cref{E_dpp_sin_simplificar} we have
$$
    \Esp_{\omega_N \sim \HopfSphericalensemble{k}{r}}
    \left[
    \Energy_{0}(\omega_N)
    \right]
=
-\frac{k^2r^2}{4}-rk\log k-\frac{k^2}{4}+\frac{k^2\sqrt{\pi}}{4}\sqrt{r}+k^2O\left(\frac{1}{\sqrt{r}}\right)
$$
and taking into account that $N=rk$:
\begin{equation}\label{E_dpp_simplificada}
    \Esp_{\omega_N \sim \HopfSphericalensemble{k}{r}}
    \left[
    \Energy_{0}(\omega_N)
    \right]
=
    -\frac{N^2}{4}-N\log k+\frac{\sqrt{\pi}}{4}\sqrt{N}k^{3/2}-\frac{k^2}{4}+k^2O\left(\sqrt{\frac{k}{N}}\right),
\end{equation}
as desired.

\subsubsection{Optimal choice of \textit{k}}\label{calculodpp}

A first approximation could be to take the derivative of \cref{E_dpp_simplificada} with respect to the parameter $k$, yielding:
\begin{equation*}
\frac{\partial}{\partial k}
    \Esp_{\omega_N \sim \HopfSphericalensemble{k}{r}}
    \left[
    \Energy_{0}(\omega_N)
    \right]
=
-\frac{N}{k}+\frac{3\sqrt{\pi}}{8}\sqrt{N}\sqrt{k}-\frac{k}{2}.    
\end{equation*}
However, setting this expression equal to zero provides solutions that are quite complex and do not lead to a clear asymptotic expansion of the energy. Therefore, after exploring several ways to select $k$, we choose  $k=CN^\alpha$ as a constant times a power of $N$ ($\alpha\in[0,1]$).

If we take $k=CN^\alpha$, then
\begin{multline*}
    \Esp_{\omega_N \sim \HopfSphericalensemble{k}{r}}
    \left[
    \Energy_{0}(\omega_N)
    \right]
     \\
    =-\frac{N^2}{4}+N\left[-\log (CN^\alpha)+\frac{\sqrt{\pi}}{4}C^{3/2}N^\frac{3\alpha -1}{2}-\frac{C^2N^{2\alpha-1}}{4}
\right]
+\mathcal{O}(N^{\frac{1+3\alpha}{2}})
.
\end{multline*}
We want the term in square brackets to be as small as possible. It is important to note several facts for this purpose. Firstly, we have $\frac{3\alpha -1}{2}\geq 2\alpha-1\quad\forall \alpha\in[0,1]$. 
Therefore, the second term is bigger than the third term if $\frac{3\alpha -1}{2} > 0$. Since this term is positive, we aim to have $\frac{3\alpha -1}{2} \leq 0$, which occurs when $\alpha \in [0, 1/3]$. Consequently, the dominant term will be the first one. Expanding this term we obtain
$$-\log(CN^\alpha)=-\log C-\alpha \log N,$$
which will clearly be smaller the larger $\alpha$ is. Therefore, the optimal value will be $\alpha=1/3$. 

Once the value of $\alpha$ is fixed, minimizing the linear term again provides a value for the constant:
$$C=\frac{4}{3^{2/3}\pi^{1/3}}.$$
Thus, we obtain an expansion for the logarithmic energy:
\begin{multline*}
    \Esp_{\omega_N \sim \HopfSphericalensemble{k}{r}}
    \left[
    \Energy_{0}(\omega_N)
    \right]
\\
=
-\frac{N^2}{4}-\frac{1}{3}N\log N+\frac{N}{3}\left(2+\log\frac{9\pi}{64}\right)-\frac{4}{3^{4/3}\pi^{2/3}}N^{2/3}+\mathcal{O}(N^{1/3}).
\end{multline*}
The coefficient of the linear term in this expansion is $\frac{1}{3}\left(2+\log\frac{9\pi}{64}\right)=0.394...$.
However, this value of $C$ would result in an impossible construction because we would have:
\begin{equation}
    k=CN^\alpha=\frac{4}{3^{2/3}\pi^{1/3}}N^{1/3}\notin \mathbb{N}.
    \label{cte}
\end{equation}
Therefore, we seek to approximate the value of the constant $C$ so that both $k$ and $N$ are positive integers.

From \cref{cte}, we have:
$$N=\frac{3^2\pi}{4^3}k^3$$
and we have $\frac{3^2\pi}{4^3}\approx 0.44178\ldots$. 
The approximation can be made arbitrarily close (at the cost of restricting possible $k$  values). 
For example, if we approximate $\frac{3^2\pi}{4^3}$ by $\frac{1}{2}$, then we have:
$$N=\frac{1}{2}k^3 \text{ and taking }k=2k'\text{ and } N=4{k'}^3\in\mathbb{N},$$
the linear-term coefficient becomes
\begin{equation*}
    \frac{\sqrt{2\pi}}{4}-\log\sqrt[3]{2}=0.396\ldots
    .
\end{equation*}
However, as mentioned earlier, it can be arbitrarily accurate. 
For example, approximating $\frac{3^2\pi}{4^3}$ by $\frac{2}{5}$ results in a linear term of $\frac{\sqrt{\pi}}{4}\sqrt[2]{\frac{5}{2}}-\log\sqrt[3]{\frac{5}{2}}=0.395...$.

\subsubsection{Proof of \cref{main:cor}}

Let $K_S = \frac{3^2\pi}{4^3}$.
For each $i\in\mathbb N$, define
\begin{equation*}
    B_i = 10^i,
\qquad
A_i = \lfloor K_S 10^i \rfloor.
\end{equation*}
Then
\begin{equation*}
    \frac{A_i}{B_i} \le K_S
    \quad\text{and}\quad
    0 \le K_S - \frac{A_i}{B_i} < \frac{1}{10^i},
\end{equation*}
so that $\frac{A_i}{B_i} \longrightarrow K_S$.
Set $k_i = 10^i$, $r_i=\frac{A_i}{B_i}k_i^2$ and define
\begin{equation*}
    n_i =k_ir_i =  A_i k_i^2.
\end{equation*}
Then $n_i\to\infty$, and since $A_i \sim K_S k_i$, we have
\begin{equation*}
    n_i \sim K_S k_i^3,
\qquad
k_i \sim \left(\frac{n_i}{K_S}\right)^{1/3}.
\end{equation*}
By \cref{thm:main}, the coefficient of the linear term is
\begin{equation*}
    f \left(\frac{A_i}{B_i}\right),
    \qquad
    f(x)=\frac{\sqrt{\pi}}{4}x^{3/2}-\log x.
\end{equation*}
Since $f$ is smooth in a neighbourhood of $K_S>0$ and 
$\frac{A_i}{B_i}\to K_S$, we obtain
\begin{equation*}
    f\left(\frac{A_i}{B_i}\right)
    =
    f(K_S)+o(1)
    =
    C_S+o(1).
\end{equation*}
Substituting into the expansion provided by \cref{thm:main} yields
\begin{equation*}
    \min_{\omega_{n_i}\subset \mathbb S^3}
    \Energy_0(\omega_{n_i})
    \le
    -\frac{n_i^2}{4}
    -\frac{1}{3}n_i\log n_i
    +
    C_S n_i
    +
    o(n_i),
\end{equation*}
which concludes the proof.

The above construction is not unique: many other infinite subsequences can be obtained by choosing any sequence of rational approximations converging to $K_S$. The use of powers of $10$ is merely for convenience.

\subsection{Points coming from the Harmonic ensemble}\label{proof:prop:02}

To compute the expected logarithmic energy, we use \cref{prop:dpp:iso} with
\begin{equation*}
    f(|<\mathbf{p, q}>|)
    =
    \left(
    \frac{
    P^{(1,0)}_{L}\!\left(
    2\left|\left\langle
        \mathbf{p, q}
        \right\rangle
        \right|^2
    -1
    \right)
    }{L+1}
    \right)^{2}
    ,
\end{equation*}
where $\mathbf{p, q}$ are unit norm representatives. 
Therefore,
\begin{equation*}
    f\left( \frac{1}{\sqrt{1+t^2}} \right)
    =
    \left(
    \frac{
    P^{(1,0)}_{L}\!\left(
    \frac{1-t^2}{1+t^2} 
    \right)
    }{L+1}
    \right)^{2}
    .
\end{equation*}
Hence, we have to compute
\begin{multline*}
    I_{L}
    =
    \frac{1}{(L+1)^2}
    \int_{0}^\infty \log\left(1+\sqrt{1-\frac{1}{1+t^2}}\right)
    P_{L}^{(1,0)} \left(  \frac{1-t^2}{1 + t^2}\right)^2
    \frac{1}{(1+t^2)^2}tdt
    \\
    =
    \frac{1}{4(L+1)^2}
    \int_{0}^{\pi}
    \log\bigl(1+\sin(\theta/2)\bigr)\,
    \bigl(P_L^{(1,0)}(\cos\theta)\bigr)^2
    \sin\theta\,d\theta .
\end{multline*}
where we have used the change of variables $t=\tan(\theta/2)$.
Define
\begin{equation*}
    \ell(\theta):=\log\bigl(1+\sin(\theta/2)\bigr).
\end{equation*}
For Jacobi polynomials $P_n^{(\alpha,\beta)}$ with $\alpha>-1$, \cite{szegő1975orthogonal}*{Theorem 8.21.12} gives a Bessel-type approximation near $\theta=0$.
Specializing to $\alpha=1$, $\beta=0$, and $n=L$, one obtains uniformly for $0<\theta\le \theta_0<\pi$:
\begin{equation*}
    \sin(\theta/2)\,P_L^{(1,0)}(\cos\theta)
    =
    \left(\frac{\theta}{\sin\theta}\right)^{1/2}
    J_1\bigl((L+1)\theta\bigr)
    +\mathcal{O}(L^{-1}).
\end{equation*}
Therefore,
\begin{equation}\label{eq:07}
    P_L^{(1,0)}(\cos\theta)
    =
    \left(\frac{\theta}{\sin\theta}\right)^{1/2}
    \frac{J_1((L+1)\theta)}{\sin(\theta/2)}
    +\mathcal{O}(L^{-1}).
\end{equation}
Split the integral $I_{L}$ as
\begin{multline*}
    I_L
    =
    \frac{1}{4(L+1)^2}
    \left(\int_{0}^{\theta_0}+\int_{\theta_0}^{\pi}\right)
    \ell(\theta)\,(P_L^{(1,0)}(\cos\theta))^2\,\sin\theta\,d\theta
    \\
    =:    \frac{1}{4(L+1)^2}
    \left(I_L^{(0)}+I_L^{(\mathrm{bulk})}\right),
\end{multline*}
where $\theta_0\in(0,\pi)$ is fixed.
By the Darboux-Szeg\H{o} asymptotic formula for Jacobi polynomials (see Szeg\H{o} \cite{szegő1975orthogonal}*{Theorems~8.21.8-9}), for every fixed $\theta_0>0$ one has
\begin{equation*}
    P_L^{(1,0)}(\cos\theta)
    =
    \frac{1}{\sqrt{\pi L}}
    \frac{\cos\!\big((L+1)\theta-\tfrac{3\pi}{4}\big)}{(\sin(\theta/2))^{3/2}(\cos(\theta/2))^{1/2}}
    +\mathcal{O}(L^{-3/2}),
\end{equation*}
uniformly for $\theta\in[\theta_0,\pi]$.
In particular,
\begin{equation*}
    \bigl(P_L^{(1,0)}(\cos\theta)\bigr)^2
    =
    \mathcal{O}(L^{-1}) \quad\text{uniformly on }[\theta_0,\pi].
\end{equation*}
Since $\ell(\theta)\sin\theta$ is bounded on $[\theta_0,\pi]$, it follows that
\begin{equation*}
    I_L^{(\mathrm{bulk})}
    =
    \int_{\theta_0}^{\pi}
    \ell(\theta)\,\bigl(P_L^{(1,0)}(\cos\theta)\bigr)^2\,\sin\theta\,d\theta
    = 
    \mathcal{O}(L^{-1}).
\end{equation*}
For the neighborhood of $0$, we insert the approximation \eqref{eq:07} into \(I_L^{(0)}\) along with the expansion $\sin(\theta/2)^2=\frac{\theta^2}{4} (1+\mathcal{O}(\theta^2))$ as $\theta\to 0$.
This gives
\begin{equation*}
    I_L^{(0)}
    =
    \int_{0}^{\theta_0}
    4
    \ell(\theta)\,
    \frac{J_1((L+1)\theta)^2}{\theta}\,d\theta
    +\mathcal{O}(L^{-1}).
\end{equation*}
Making the change of variables $x=(L+1)\theta$, we get
\begin{equation*}
    I_L^{(0)}
    =
    \int_{0}^{(L+1)\theta_0}
    4
    \ell\left(\frac{x}{L+1}\right)
    \frac{J_1(x)^2}{x}\,dx
    +\mathcal{O}(L^{-1}).
\end{equation*}
As \(\theta\to0\), we have
\begin{equation*}
\ell(\theta)=\frac{\theta}{2}+\mathcal{O}(\theta^2).
\end{equation*}
Therefore,
\begin{equation*}
    \ell\left(\frac{x}{L+1}\right)
    =
    \frac{x}{2(L+1)}
    +\mathcal{O}\left(\frac{x^2}{(L+1)^2}\right).
\end{equation*}
Substituting this into the integral yields
\begin{equation}\label{eq:09}
    I_L^{(0)}
    =
    \frac{2}{(L+1)}
    \int_{0}^{(L+1)\theta_0} J_1(x)^2\,dx
    +\mathcal{O}(L^{-1}).
\end{equation}

\subsubsection{Growth of the Bessel integral}

We recall the classical asymptotic expansion of the Bessel function $J_\nu$, valid for fixed $\nu$ and $x\to\infty$ (see, e.g., \cite{watson1995treatise}*{Chapter VII}, or \cite{DLMF}*{S. 10.17(ii)}):
\begin{equation*}
J_\nu(x)
=
\sqrt{\frac{2}{\pi x}}
\left(
\cos \left(x-\frac{\nu\pi}{2}-\frac{\pi}{4}\right)
+ \mathcal{O}(x^{-1})
\right).
\end{equation*}
For $\nu=1$, this gives
\begin{equation*}
J_1(x)
=
\sqrt{\frac{2}{\pi x}}
\left(
\cos \left(x-\frac{3\pi}{4}\right)
+ \mathcal{O}(x^{-1})
\right),
\qquad x\to\infty.
\end{equation*}
Squaring, we obtain
\begin{align*}
J_1(x)^2
&=
\frac{2}{\pi x}
\cos^2 \left(x-\frac{3\pi}{4}\right)
+ \mathcal{O}(x^{-2}) \\
&=
\frac{1}{\pi x}
\Bigl(1+\cos(2x-\tfrac{3\pi}{2})\Bigr)
+ \mathcal{O}(x^{-2}),
\qquad x\to\infty,
\end{align*}
where we used the identity $\cos^2 y=\frac12(1+\cos 2y)$.

Although $J_1(x)^2$ contains an oscillatory term of order $x^{-1}$, its integral exhibits logarithmic growth. 
Indeed, integrating term by term and using that
\begin{equation*}
\int_{1}^\infty \cos(2x-\tfrac{3\pi}{2})\,\frac{dx}{x}
\end{equation*}
converges conditionally (see again \cite{watson1995treatise}*{Chapter VII}), we obtain
\begin{equation}\label{eq:10}
\int_0^A J_1(x)^2\,dx
=
\frac{1}{\pi}\int_1^A \frac{dx}{x} + \mathcal{O}(1)
=
\frac{1}{\pi}\log A + \mathcal{O}(1),
\qquad A\to\infty.
\end{equation}
Taking $A=(L+1)\theta_0$ and applying \eqref{eq:10} in \eqref{eq:09} we get
\begin{equation*}
    I_L^{(0)}
    =
    \frac{2 \log (L+1)}{\pi (L+1)}
    +\mathcal{O}(L^{-1}).
\end{equation*}
Combining this with the bulk estimate, we obtain
\begin{equation*}
    I_L
    =
    \frac{\log (L+1)}{2\pi (L+1)^3}
    +
    \mathcal{O}(L^{-3}),
    \qquad L\to\infty.
\end{equation*}
%

\subsubsection{Proof of \cref{prop:02}}

Recall that $r=(L+1)^2$ and $N=rk$.
By \cref{prop:dpp:iso} and the preceding results, we conclude:
\begin{equation*}
    \Esp_{\omega_N \sim \HopfHarmonicensemble{k}{r}}
    \left[
    \Energy_{0}(\omega_N)
    \right]
    =
    -
    rk\log k
    -
    \frac{k^2 r^2}{4}
    +
    \frac{k^2 \sqrt{r} \log (r)}{4\pi }
    +
     k^2 
     \mathcal{O}\left( \sqrt{r} \right)
     .
\end{equation*}

\subsubsection{Proof of \cref{cor:Harm:asymp}}

We take $k=\left\lfloor\frac{\sqrt{r}}{\log r}\right\rfloor$ and let $ r\to\infty$.
Then
\begin{equation*}
k
=
\frac{\sqrt r}{\log r}
\left(1+o(1)\right),
\qquad
N=kr
=
\frac{r^{3/2}}{\log r}
\left(1+o(1)\right).
\end{equation*}
Inverting this relation gives
\begin{equation*}
r
=
N^{2/3}\log(N)^{2/3}
\left(1+o(1)\right),
\qquad
k
=
\frac{N^{1/3}}{\log(N)^{2/3}}
\left(1+o(1)\right).
\end{equation*}
We now expand the different terms of the expression in \cref{prop:02}.
First,
\begin{equation*}
\log k
=
\frac13\log N
-
\frac23\log\log N
+
o(1),
\end{equation*}
and therefore
\begin{equation*}
-N\log k
=
-\frac13 N\log N
+
\frac23 N\log\log N
+
o(N).
\end{equation*}
Next,
\begin{equation*}
k^2 \sqrt r \log r
=
\frac{r}{\log(r)^2}\,\sqrt r \log r
\left(1+o(1)\right)
=
\frac{r^{3/2}}{\log r}
\left(1+o(1)\right)
=
N\left(1+o(1)\right),
\end{equation*}
so that
\begin{equation*}
\frac{k^2 \sqrt r \log r}{4\pi}
=
\mathcal O(N).
\end{equation*}
Similarly,
\begin{equation*}
k^2 \mathcal O(\sqrt r)
=
\mathcal O\!\left(\frac{r^{3/2}}{\log(r)^2}\right)
=
o(N).
\end{equation*}
Collecting all contributions yields
\begin{equation*}
\Esp_{\omega_N \sim \HopfHarmonicensemble{k}{r}}
\!\left[
\Energy_{0}(\omega_N)
\right]
=
-\frac{N^2}{4}
-
\frac13 N\log N
+
\frac23 N\log\log N
+
\mathcal O(N),
\end{equation*}
which concludes the proof.


\appendix
\section{The Diamond ensemble}
The Diamond ensemble is a family of points defined in \cites{BeltranEtayo2020,BeltranEtayoLopezGomez2023} with the objective of minimizing the discrete logarithmic energy on $\S^2$. 
This quasi-deterministic family actually allows us to compute analytically the asymptotic expansion of the energy, providing a rigorous proof beyond numerical results. 

The Diamond ensemble is defined from unity roots on parallels determined by given latitudes. Thus, it is defined by the number of parallels, $p$, their latitudes, $z_j$, and the number of roots per parallel, $r_j$.
However, points in this set are random, as they are defined based on $\theta_j$, uniformly distributed random variables in the interval $[0, 2\pi)$:
\begin{multline}\label{ptos_diamante}
    \Omega (p, \{r_j\}, \{z_j\}) = \{\mathbf{x}_j^i\} \\
    =\left\{\left(\sqrt{1-z_j^2}\cos\left(\frac{2\pi i }{r_j} + \theta_j\right), \sqrt{1-z_j^2}\sin\left(\frac{2\pi i }{r_j} + \theta_j\right), z_j\right)\right\}.
\end{multline}
The expectation of the logarithmic energy of the points described above is then given by
\begin{multline*}
    \Esp_{\theta_1,...,\theta_p}[\Energy_{0}(\Omega (p, \{r_j\}, \{z_j\})] =  -2\log(2) \\
    - \sum_{j=1}^p r_j\left[ \log(4) + \frac{1}{2}\log(1-z_j^2)+\log r_j\right] - \sum_{j,k=1}^p\frac{\log(1-z_j z_k+|z_j-z_k|)}{2},
\end{multline*}
see \cite{BeltranEtayo2020}*{Proposition 2.4}.
The previous formula provides us with the value of $z_l$ that minimizes the logarithmic energy given the different $\{r_j\}$, which is given by:
\begin{equation}\label{z_Diamond}
    z_l=\frac{\displaystyle\sum_{j=l+1}^p r_j - \displaystyle\sum_{j=1}^{l-1} r_j}{1 + \displaystyle\sum_{j=1}^p r_j} = 1- \frac{1 + r_l + 2\displaystyle\sum_{j=1}^{l-1} r_j}{N-1},
\end{equation}
where $N=2 +\sum_{j=1}^p r_j$ is the total number of points.

Thus, the remaining task is to choose the number of roots of unity $r_l$ on each parallel $l$ ( for a given number of parallels $p$).
If we temporary allow $r_l$ to be non-integer, a natural assumption is that the distance between points along each parallel should be proportional to the distance between parallels.
In other words, one might set
$$ r_j=\frac{K_0\pi\sin\left(\frac{j\pi}{p+1}\right)}{\sin\left(\frac{\pi}{2(p+1)}\right)}
$$
for some constant $K_0$.
Using this continuous ansatz and expanding the energy, one finds the optimal constant to be $K_0=3/\pi$. 

However, this construction is not possible since the number of points in each parallel must be a natural number. 
Beltrán and Etayo \cite{BeltranEtayo2020} introduced a piecewise-linear approximation to ${r_j}$, and a further improved (not necessarily continuous) selection was given in \cite{BeltranEtayoLopezGomez2023}.
For the optimal selection of ${r_j}$, the associated energy reads
\begin{equation*}
\Esp_{\theta_1,...,\theta_p}\left[\Energy_{0}(\diamond_N)\right]
=
\left( \frac{1}{2} - \log 2 \right)N^2-\frac{1}{2}N\log N+c_{\diamond}N+o(N)    
\end{equation*}
where $c_{\diamond}=-0.049222...$ that is quite close to the lower bound of the minimal logarithmic energy, see \cref{minlog2}.


\subsection{Fibres of the Diamond ensemble}\label{calculo_Diamond}

The preimage of the Hopf fibration, according to \cref{inversa_hopf}, is not defined at the point $(-1,0,0)$ 
To avoid this point, we consider the Diamond ensemble without the poles and then rotate the set to place the South pole at $(-1,0,0)$. 
With this rotation applied to  \cref{ptos_diamante}, the points of the Diamond ensemble take the form
$$\mathbf{x}_j^i=\left( z_j, \sqrt{1-z_j^2}\cos\left(\frac{2\pi i}{r_j}+\theta_j\right),\sqrt{1-z_j^2}\sin\left(\frac{2\pi i}{r_j}+\theta_j\right) \right),$$
whose fibre by the inverse of the Hopf fibration is, after simplification,
\begin{multline*}
    h^{-1}(\mathbf{x}_j^i)
    =\frac{1}{\sqrt{2}}
    \Bigl(\sqrt{1+z_j}\cos t, \sqrt{1+z_j}\sin t, \Big.
    \\
    \left.\sqrt{1-z_j}\sin \left(t-\frac{2\pi i}{r_j}-\theta_j\right), \sqrt{1-z_j}\cos \left(t-\frac{2\pi i}{r_j}-\theta_j\right)\right).
\end{multline*}
We take roots of unity along each fibre, with a uniformly distributed random phase, and denote the corresponding rotation angle by $\psi$.
We have
$$t=\frac{2\pi l}{k}+\psi_j^i \;\text{ and }\; \phi=\frac{2\pi i}{r_j} + \theta_j,$$
with $1\leq l \leq k$, where $k$ is a positive integer to be determined.

When calculating the distance between two points, we have
\begin{multline*}
    ||\mathbf{x}_{j_1}^{i_1l_1} - \mathbf{x}_{j_2}^{ i_2l_2}|| =\left[2 - \sqrt{1 + z_{j_1}}\sqrt{1 + z_{j_2}}\cos\left(\frac{2\pi}{k}(l_1-l_2) + \psi_{j_1}^{i_1} - \psi_{j_2}^{i_2}\right)  \right.
    \\
    \left.-\sqrt{1 - z_{j_1}}\sqrt{1 - z_{j_2}}\cos\left(\frac{2\pi}{k}(l_1-l_2) - \frac{2\pi i_1}{r_{j_1}}+ \frac{2\pi i_2}{r_{j_2}} + \psi_{j_1}^{i_1} - \psi_{j_2}^{i_2} - \theta_{j_1}+\theta_{j_2}\right)\right]^{1/2}.
\end{multline*}
So, the calculation of the energy corresponds to the sum of two quantities:
\begin{enumerate}
    \item The sum of the logarithms of the inverse of the distance between all different pairs of points that are in the same fibre, denoted as quantity $A$.
    By \cref{prop_energia_fibra}, $A =-N\log k$.
    \item The expected sum of the logarithms of the inverse of the distance between all different pairs of points belonging to different fibres, denoted as quantity $B$.
\end{enumerate}

\subsubsection{Quantity \textit{B}}

To simplify the notation, we denote the distance between any two points belonging to different fibres as follows:
\begin{multline*}
||\mathbf{x}_i-\mathbf{x}_j||=\left(2 - \sqrt{1 + z_i}\sqrt{1 + z_j}\cos(\psi_i-\psi_j) - 
\right.\\\left.
\sqrt{1 - z_i}\sqrt{1 - z_j}\cos((\psi_i-\psi_j)-(\theta_i-\theta_j))\right)^{1/2},
\end{multline*}
where $\psi$ indicates the angle in the fibre and $\theta$ the angle in the parallel of $\S^2$.
Given two points from different fibres, computing the expectation of the logarithm of their distance is
\begin{multline*}
    \Esp_{\theta_i, \theta_j, \psi_i, \psi_j}\left[-\log||\mathbf{x}_i-\mathbf{x}_j||\right]=\frac{1}{(2\pi)^4}\int_0^{2\pi}\int_0^{2\pi}\int_0^{2\pi}\int_0^{2\pi}
    \\
    -\scalebox{0.8}{$\log\left(\sqrt{2 - \sqrt{1 + z_i}\sqrt{1 + z_j}\cos(\psi_i-\psi_j) - \sqrt{1 - z_i}\sqrt{1 - z_j}\cos((\psi_i-\psi_j)-(\theta_i-\theta_j))}\right)d\theta_i d\theta_j d\psi_i d\psi_j$}
    \\
    =\scalebox{0.92}{$\frac{-1}{2^3\pi^2}\int_0^{2\pi}\int_0^{2\pi}\log\left(2 - \sqrt{1 + z_i}\sqrt{1 + z_j}\cos(\psi) - \sqrt{1 - z_i}\sqrt{1 - z_j}\cos(\psi-\theta)\right)d\theta d\psi.$}
\end{multline*}
By applying \cref{lema_ruso} we obtain
\begin{multline}\label{eq:03}
    \Esp_{\theta_i, \theta_j, \psi_i, \psi_j}\left[-\log||\mathbf{x}_i-\mathbf{x}_j||\right]
    \\
    =
    \scalebox{1}{$\frac{-1}{4\pi}\int_0^{2\pi}\log\left(\frac{2-\sqrt{1 + z_i}\sqrt{1 + z_j}\cos(\psi)+\sqrt{(2-\sqrt{1 + z_i}\sqrt{1 + z_j}\cos(\psi))^2-(\sqrt{1 - z_i}\sqrt{1 - z_j})^2}}{2}\right)d\psi.$}
\end{multline}
Unfortunately, we have not been able to solve the last elliptic integral analytically. 
This has been attempted through manual methods, computational methods, and by consulting tables of integrals, all with negative results. 
As a result, this approach for the analytical calculation of the asymptotic expansion has been abandoned, and a numerical approach has been adopted instead, as illustrated in \cref{fig:primer_orden,fig:sec_orden}.



\bibliographystyle{amsplain}
\bibliography{Bibliography}


\end{document}